\numberwithin{equation}{section}
\numberwithin{figure}{section}
\theoremstyle{plain}
\newtheorem{thm}{\protect\theoremname}
  \theoremstyle{plain}
  \newtheorem{lem}[thm]{\protect\lemmaname}
  \theoremstyle{definition}
  \newtheorem{defn}[thm]{\protect\definitionname}
  \theoremstyle{remark}
  \newtheorem{rem}[thm]{\protect\remarkname}
  \theoremstyle{plain}
  \newtheorem{prop}[thm]{\protect\propositionname}
  \theoremstyle{remark}
  \newtheorem{claim}[thm]{\protect\claimname}
  \providecommand{\claimname}{Claim}
  \providecommand{\definitionname}{Definition}
  \providecommand{\lemmaname}{Lemma}
  \providecommand{\propositionname}{Proposition}
  \providecommand{\remarkname}{Remark}
\providecommand{\theoremname}{Theorem}
\begin{document}

\title{Injectivity of a certain cycle map for finite dimensional W-algebras}

\author{Christopher dodd}
\begin{abstract}
We study a certain cycle map defined on finite dimensional modules
for the W-algebra with regular integral central character. Via comparison
with the theory in postive characteristic, we show that this map injects
into the top Borel-Moore homology group of a Springer fibre. This
is the first result in a larger program to completely desribe the
finite dimensional modules for the W algebras.
\end{abstract}
\maketitle
\tableofcontents{}

\section{Introduction}

Recently, the subject of the finite $W$-algebras has come to the
attention of many researchers. Although originally introduced in the
physics literature, they were first defined in a mathematical context
by Premet \cite{key-21}, who related them to the study of (non-restricted)
modular representations of semisimple lie algebras. The fundamental
paper of Gan and Ginzburg \cite{key-13} reproved some of Premet's
results, and recast them in the light of non-commutative algebraic
geometry. Since then, many authors have made contributions to their
study, c.f., e.g., \cite{key-22,key-18,key-19,key-8,key-11}, and
the survey articles \cite{key-30} and \cite{key-31} have appeared.
In particular, the results of \cite{key-8} and \cite{key-19} are
concerned with the finite dimensional representations of $W$-algebras.
Despite the significant progress made there, some fundamental questions
remain open. For instance, given a finite $W$-algebra $U(\mathfrak{g},e)$
and an integral central character $\lambda$, it is still not known
how to parametrize the simple finite dimensional $U(\mathfrak{g},e)$
modules with character $\lambda$. The goal of this paper is to provide
the first step to answering this question. In fact, we will provide
some detailed information on the $K$-group $K_{\mathbb{Q}}(mod^{f.d.}(U^{\lambda}(\mathfrak{g},e)))$. 

Our main tool will be the use of a certain characteristic cycle map
which takes $K_{\mathbb{Q}}(U^{\lambda}(\mathfrak{g},e))$ to the
the homology group $H_{top}(\mathcal{B}_{e},\mathbb{Q})$- this is
the top Borel-Moore homology of the Springer fibre associated to the
nilpotent element $e$ (definitions will be recalled below). The latter
group has a natural basis (as a $\mathbb{Q}$- vector space) indexed
by irreducible components of the variety $\mathcal{B}_{e}$. In addition,
it has the structure of a module over the Weyl group associated to
$\mathfrak{g}$, called $W$. This is the classical construction of
Springer, which finds all of the simple $W$-modules in such homology
groups. 

The group $K_{\mathbb{Q}}(U^{\lambda}(\mathfrak{g},e))$ also has
a natural structure of a $W$-module, via the action of reflection
functors on the category $U^{\lambda}(\mathfrak{g},e)-mod$. The theory
of these functors, which is parallel to the classical theory of reflection
functors for $U(\mathfrak{g})$ developed by Jantzen, has been worked
out in \cite{key-35}. We shall recall their basic properties below. 

With all of this in hand, we can state the basic theorem of this paper. 
\begin{thm}
The cycle map 
\[
cc:K_{\mathbb{Q}}(mod^{f.d.}(U^{\lambda}(\mathfrak{g},e)))\to H_{top}(\mathcal{B}_{e},\mathbb{Q})
\]
 is injective and $W$-equivariant, with respect to the actions of
$W$ discussed above. 
\end{thm}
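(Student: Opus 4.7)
The plan is to use reduction to positive characteristic, where the geometry of the Springer fibre governs representations of $U(\mathfrak{g})$ on the nose. In characteristic zero, the characteristic cycle is defined via good filtrations and singular support, but this definition does not readily expose the relation to the homology of $\mathcal{B}_e$. By spreading the finite $W$-algebra and its modules out over a suitable base $\mathbb{Z}[1/N]$ and specialising at a large prime $p$, one obtains a modular finite $W$-algebra, which by a theorem of Premet (refined by Loseu) is Morita equivalent, after a central reduction, to a matrix algebra over a central reduction of $U_{\mathbb{F}_p}(\mathfrak{g})$. Thus the $K$-theory of finite dimensional $U^{\lambda}(\mathfrak{g},e)$-modules injects, after base change to $\mathbb{F}_p$, into the $K$-theory of central reductions of $U_{\mathbb{F}_p}(\mathfrak{g})$-modules supported over $e$.

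Next I would invoke the Bezrukavnikov--Mirkovi\'c--Rumynin derived localisation theorem, which for regular integral $\lambda$ equates the derived category of modules over such a central reduction with an equivariant derived category of coherent sheaves on the Springer resolution $\widetilde{\mathfrak{g}}$ set-theoretically supported on $\mathcal{B}_e^{(1)}$. Under this equivalence, the class of a module in $K$-theory corresponds to the $K$-theory class of a coherent sheaf on $\widetilde{\mathcal{N}}$ supported on $\mathcal{B}_e$. Composing with the Chern character / cycle class map $K(\operatorname{Coh}_{\mathcal{B}_e}(\widetilde{\mathcal{N}})) \to H_*(\mathcal{B}_e,\mathbb{Q})$, and using that the top-dimensional components of $\mathcal{B}_e$ give a basis of $H_{top}$, one obtains a map whose image on the relevant classes is controlled. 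The crucial input, which I would verify by an explicit identification of filtrations, is that this composite agrees with the base change of the characteristic cycle $cc$ in characteristic zero; once this compatibility is in hand, injectivity in characteristic $p$ (which follows because the cycle classes of simple objects supported on distinct top components are linearly independent) propagates back to the injectivity statement in characteristic zero by Nakayama / flatness over $\mathbb{Z}[1/N]$.

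For the $W$-equivariance, I would identify the action of a simple reflection $s_\alpha \in W$ on $K_{\mathbb{Q}}(\operatorname{mod}^{f.d.}(U^{\lambda}(\mathfrak{g},e)))$ induced by the reflection functors of \cite{key-35} with the specialisation of the corresponding twisting functor in positive characteristic. Under derived localisation, the latter is given by tensoring with an appropriate line bundle followed by reflection across the wall, whose effect on $H_{top}(\mathcal{B}_e,\mathbb{Q})$ is exactly the Springer $W$-action. Combined with the compatibility of cycle maps established above, this gives $W$-equivariance in characteristic zero.

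The main obstacle, and the step that requires the most care, is the compatibility of characteristic cycles across characteristics: one must check that the characteristic variety of a good filtration in characteristic zero, intersected with the zero-section of $T^*\mathcal{B}$ after reduction $\mathrm{mod}\ p$, matches the support cycle of the corresponding coherent sheaf on $\widetilde{\mathcal{N}}_{\mathbb{F}_p}$ coming from the localisation theorem, including multiplicities. Verifying this is essentially a question about the compatibility of the Rees construction with the Azumaya splitting on $\widetilde{\mathcal{N}}^{(1)}$, and is where the bulk of the technical work will reside; once it is settled, injectivity and $W$-equivariance follow formally from the positive characteristic picture as described.
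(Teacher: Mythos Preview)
Your overall architecture---spread out over a finitely generated ring, reduce mod $p$, invoke localisation and Azumaya splitting, then transport injectivity back---is the same as the paper's. The $W$-equivariance sketch is also essentially right: the paper reduces to compatibility of translation/reflection functors with the braid group action of Riche on coherent sheaves, just as you propose.

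However, there is a genuine gap in your injectivity argument. You write that injectivity in characteristic $p$ ``follows because the cycle classes of simple objects supported on distinct top components are linearly independent.'' This is not justified and is not how the argument runs. The derived equivalence with $\mathrm{Coh}_{\mathcal{B}_\chi}(\tilde{S}_{\mathcal{N}})$ does give an isomorphism on $K$-groups, and the Chern character $K_{\mathbb{Q}}(\mathcal{B}_\chi)\to A_{\mathbb{Q}}(\mathcal{B}_\chi)$ is an isomorphism; but $cc$ is only the \emph{projection to the top graded piece} $A_{\dim\mathcal{B}_\chi}$. There is no a priori reason the simples land on distinct irreducible components, nor that the lower-degree Chern classes vanish, so projecting to $H_{top}$ could in principle kill information. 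Your ``compatibility of filtrations with the Azumaya splitting'' will give you that the characteristic-zero class $[CS(M)]$ matches the class of the $D^{\lambda,\chi}$-module $\mathcal{L}^\lambda(M_k)$ regarded as a coherent sheaf on $\tilde{S}_{\mathcal{N}}^{(1)}$, but that still leaves the passage from this class to the class $[Coh(M_k)]$ coming from the splitting bundle, and the projection to top degree.

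The paper closes this gap with a Frobenius weight argument that you are missing. One shows $[CS(\mathcal{L}^\lambda(M_k))]=p^{-d(e)}(Fr_{\mathcal{B}})^{*}[Coh(M_k)]$ in $K(\mathcal{B}_\chi)$, where $d(e)=\dim\mathcal{B}-\dim\mathcal{B}_\chi$. Since $(Fr_{\mathcal{B}})^{*}$ acts on $A_i(\mathcal{B}_\chi)$ by $p^{\dim\mathcal{B}-i}$, the degree-$i$ piece of the localised Chern character of the left-hand side equals $p^{\dim\mathcal{B}_\chi-i}$ times that of $[Coh(M_k)]$. Both sides are reductions of classes defined over $A$ and hence independent of $p$ for $p\gg0$; letting $p$ vary forces all degrees $i<\dim\mathcal{B}_\chi$ to vanish and identifies the top pieces. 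Thus $cc$ coincides with the full Chern character composed with the equivalence $M\mapsto Coh(M_k)$, which is injective. Without this $p$-weight trick (or some substitute), your proposal does not establish injectivity of the map to $H_{top}$.

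A minor structural difference: you route everything through $U(\mathfrak{g})$-modules via Premet's Morita equivalence and BMR, whereas the paper builds a parallel localisation theory for the modular $W$-algebra directly on $\tilde{S}_{\mathcal{N}}^{(1)}$ (its own Azumaya algebra $D^{\lambda,\chi}$ and splitting bundle $E_\chi^\lambda$). Either route is viable, but the Frobenius argument above is unavoidable in both.
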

Let us note right away that the paper \cite{key-12} provides a weaker
version of this result; namely, they prove the numberical bound 
\[
\mbox{dim}(K_{\mathbb{Q}}(mod^{f.d.}(U^{\lambda}(\mathfrak{g},e))))\leq\mbox{dim}(H_{top}(\mathcal{B}_{e},\mathbb{Q}))
\]

Their proof uses $D$-module theory in characteristic zero and the
theory of springer's representations.

The main tool of this paper will be the use of reduction mod $p$
and a comparison with the theory of $W$-algebras over algebraically
closed fields of positive characteristic. Following the reasoning
of \cite{key-5}, we shall construct a localization theory for such
algebras, and compare the resulting geometry with the geometry in
characteristic zero. In positive characteristic, we have extra tools
such as the Azumaya splitting and the action of the frobenius morphism.
We shall use these to deduce the result for sufficiently large positive
characteristic, and then we shall transfer to characteristic zero.

We should also point out that I. Losev and V. Ostrik have a complete
description of the image of the map $cc$, which will appear in a
forthcoming work. To state it, let us recall that to the nilpotent
element $e$, we can also associate a (possibly trivial) cell $c$
in the Weyl group $W$ (c.f. \cite{key-26} for a complete introduction
to cells and representations of Weyl groups and Hecke algebras). To
this cell we can then associate 
\[
H_{top}(\mathcal{B}_{e},\mathbb{Q})_{c}
\]
 a sub-$W$-representation of $H_{top}(\mathcal{B}_{e},\mathbb{Q})$.
They conjecture that this is the image of the map $cc$. Therefore,
combined with the result in this paper, this theorem yields a complete
description of $K_{\mathbb{Q}}(mod^{f.d.}(U^{\lambda}(\mathfrak{g},e)))$.

\thanks{The author would like to express his gratitude to Ivan Losev, for
suggesting the problem, and to Roman Bezrukavnikov for many helpful
conversations. }

\section{Finite W-algebras }

There are a great many references which explain the basic construction
of the finite W-algebras. The papers of Premet \cite{key-21}, Gan-Ginzburg
\cite{key-13}, and Brundan-Goodwin-Kleschev \cite{key-8} all have
very complete introductions. For now we shall just recall the very
basic outline of what we need. 

We let $\mathfrak{g}$ be a complex semisimple lie algebra, and let
$e\in\mathfrak{g}$ be a nonzero nilpotent element. By the Jacobson-Morozov
theorem, there exist $f,h\in\mathfrak{g}$ such that $\{e,f,h\}$
form an $\mathfrak{sl_{2}}$-triple, and we fix such a triple. We
define the Slodowy slice $S\subseteq\mathfrak{g}^{*}$ to be the affine
subspace which corresponds, via the killing isomorphism $\mathfrak{g}\tilde{=}\mathfrak{g}^{*}$,
to the affine space $e+\mbox{ker}(ad(f))$. Slodowy's book \cite{key-24}
contains a wealth of information on these spaces and their uses in
lie theory. Three facts about these spaces are crucial for us. 

The first, recorded in \cite{key-13} section 3, is that this affine
space has a natural Poisson structure inherited from the Poisson structure
on $\mathfrak{g}^{*}$. The second, to be found in \cite{key-13},
section 2, is that the space $S$ admits a natural $\mathbb{C}^{*}$
action defined as follows: our chosen $\mathfrak{sl}_{2}$-triple
gives a homomorphism $\tilde{\gamma}:SL_{2}(\mathbb{C})\to G$, and
we define $\gamma(t)=\tilde{\gamma}\begin{pmatrix}t & 0\\
0 & t^{-1}
\end{pmatrix}$, so that $\mbox{Ad}(\gamma(t))e=t^{2}e$; so we define $\bar{\rho}(t)=t^{-2}\mbox{Ad}^{*}(\gamma(t))$,
a $\mathbb{C}^{*}$-action on $\mathfrak{g}$ which stabilizes $S_{e}$
and fixes $\chi$ (the element of $\mathfrak{g}^{*}$ corresponding
to $e$ under the killing isomorphism). In fact, this action contracts
$S$ to $\chi$. So, we get a grading on $O(S)$ and it is easy to
see that the Poisson multiplication respects this grading. 

Finally, we wish to recall that the space $S$ can be realized as
a {}``Hamiltonian reduction'' of the space $\mathfrak{g}^{*}$.
To explain this, we let $\chi\in\mathfrak{g}^{*}$ be the element
associated to $e$ under the isomorphism $\mathfrak{g}\tilde{=}\mathfrak{g}^{*}$
given by the killing form. We define a skew-symmetric bilinear form
on $\mathfrak{g}(-1)$ via $<x,y>=\chi([x,y])$, which is easily seen
to be nondegenerate. Thus, $(\mathfrak{g}(-1),<,>)$ is a symplectic
vector space, and we choose $l\subset\mathfrak{g}(-1)$ a Lagrangian
subspace. We define $\mathfrak{m}_{l}=l\oplus\bigoplus_{i\leq-2}\mathfrak{g}(i)$,
a nilpotent lie algebra such that $\chi|_{\mathfrak{m}_{l}}$ is a
character of $\mathfrak{m}_{l}$. We let $M_{l}$ be the unipotent
connected algebraic subgroup of $G$ such that $\mbox{Lie}(M_{l})=\mathfrak{m}_{l}$.
We let $I$ denote the ideal of $\mbox{Sym}(\mathfrak{g})=O(\mathfrak{g}^{*})$
generated by $\{m-\chi(m)|m\in\mathfrak{m}_{l}\}$. Then we have an
isomorphism of algebras 
\[
O(S)\tilde{=}(O(\mathfrak{g}^{*})/I)^{M_{l}}
\]
where $M_{l}$ acts via the adjoint action (c.f. \cite{key-13}, lemma
2.1). 

Given this, we can recall that the finite W-algebra associated to
$e\in\mathfrak{g}$, denoted $U(\mathfrak{g},e)$, is a filtered associative
algebra whose associated graded Poisson algebra is isomorphic to $O(S)$.
In fact, the algebra $U(\mathfrak{g},e)$ can be defined as the Hamiltonian
reduction of the enveloping algebra $U(\mathfrak{g})$ in a manner
exactly parallel to the formula above. There is a natural map $Z(U(\mathfrak{g}))\to Z(U(\mathfrak{g},e))$
which is an isomorphism. So we have the usual description of central
characters indexed by elements of the affine space $\mathfrak{h}^{*}/W$
(where $\mathfrak{h}$ is a Cartan subalgebra of $\mathfrak{g}$,
and $W$ is the Weyl group). Given a $\lambda\in\mathfrak{h}$, we
thus get an ideal of $Z(U(\mathfrak{g},e))$ and then an ideal $J_{\lambda}$
of $U(\mathfrak{g},e)$, and we define $U^{\lambda}(\mathfrak{g},e):=U(\mathfrak{g},e)/J_{\lambda}$.

\section{The Cycle Map}

In the paper \cite{key-11}, the author and Kobi Kremnizer gave a
geometric interpretation of certain categories of modules over finite
W-algebras. Inspired by the classical Beilinson-Bernstein localization
theorem, we considered the singular Poisson variety $S\cap\mathcal{N}:=S_{\mathcal{N}}$,
where $\mathcal{N}$ denotes the nilpotent cone of $\mathfrak{g}^{*}$.
The variety $\mathcal{N}$ has a resolution of singularities, denoted
$\mu:\tilde{\mathcal{N}}\to\mathcal{N}$, called the springer resolution
(see \cite{key-9}, chapter 3, for a very complete treatment). It
turns out that the restriction of this map $\mu:\mu^{-1}(S_{\mathcal{N}})\to S_{\mathcal{N}}$
is also a resolution of singularities. We shall denote the scheme
theoretic preimage $\mu^{-1}(S_{\mathcal{N}})$ by $\tilde{S}_{\mathcal{N}}$.
This variety has a natural symplectic structure which extends the
Poisson structure on the base $S_{\mathcal{N}}$. 

Now, the $\mathbb{C}^{*}$ action constructed above lifts naturally
to $\tilde{S}_{\mathcal{N}}$, and it contracts the smooth variety
$\tilde{S}_{\mathcal{N}}$ to the singular variety $\mathcal{B}_{\chi}:=\mu^{-1}(\chi)$,
the springer fibre of $\chi$. This provides the perfect setting to
do geometry. 

In particular, given an anti dominant regular weight $\lambda$, we
constructed a sheaf of $\mathbb{C}[[h]]$- algebras $D_{h}(\lambda,\chi)(0)$
on $\tilde{S}_{\mathcal{N}}$, which is a quantization in the sense
of \cite{key-3}, i.e., it is flat over $\mathbb{C}[[h]]$ and satisfies
$D_{h}(\lambda,\chi)(0)/hD_{h}(\lambda,\chi)(0)\tilde{=}O(\tilde{S}_{\mathcal{N}})$.
Let us describe the relationship between this variety and the finite
$W$-algebra.

The algebra $U^{\lambda}(\mathfrak{g},e)$ is naturally filtered,
as recalled above. Thus we can consider the Rees algebra associated
to this filtered algebra (c.f. \cite{key-4}, section 2.4) which is
naturally an algebra over $\mathbb{C}[h]$. If we then complete with
respect to $h$, we obtain an algebra which we call $U_{h}^{\lambda}(\mathfrak{g},e)(0)$
(One can then formally invert $h$ to obtain a $\mathbb{C}((h))$-algebra
$U_{h}^{\lambda}(\mathfrak{g},e)$, which is one of the main players
in \cite{key-11}, although it won't be used here). We then have 
\begin{equation}
\Gamma(D_{h}(\lambda,\chi)(0))=U_{h}^{\lambda}(\mathfrak{g},e)(0)
\end{equation}
 With these ingredients in hand, we can explain our construction of
the cycle of a finite dimensional module $M$ over $U^{\lambda}(\mathfrak{g},e)$.
Given such, we choose any good filtration $F$ on $M$ (c.f., \cite{key-14},
appendix D). Then we have the module $\mbox{Rees}(M;F)$ and, after
completion, the $\mathbb{C}[[h]]$-module $\widehat{\mbox{Rees}(M;F)}$.
We can then define a localization functor 
\[
\mbox{Loc(}M;F)(0)=D_{h}(\lambda,\chi)(0)\otimes_{U_{h}^{\lambda}(\mathfrak{g},e)(0)}\widehat{\mbox{Rees}(M;F)}
\]
 which makes sense because of 3.1. Because $D_{h}(\lambda,\chi)(0)$
is a quantization of $\tilde{S}_{\mathcal{N}}$, we then get a coherent
sheaf on $\tilde{S}_{\mathcal{N}}$ by letting 
\[
CS(M;F):=\mbox{Loc}(M;F)(0)/h\mbox{Loc}(M;F)(0)
\]

Now we define our main object of study, the cycle map, by 
\[
cc(M):=CC(CS(M;F))\in H_{top}(\mathcal{B}_{\chi},\mathbb{Q})
\]
 where $CC$ stands for the characteristic cycle of a coherent sheaf.
A very complete treatment of characteristic cycles is provided by
the book \cite{key-15}, and some details are recalled in section
8 below. For now, we just recall that there is a chern character map
\[
K(X)\to H_{*}(X)
\]
from the $K$ theory to the total Borel-Moore homology of a projective
scheme $X$. The projection of this map to the top graded piece of
$H_{*}(X)$ yields the map $CC$. 

The fact that, in our case, $CS(M;F)$ is actually supported on $\mathcal{B}_{\chi}$
will also be addressed in section 6 below. 

The fact that the constructon of $cc$ does not depend on the filtration
chosen (while $CS(M;F)$ does) is a standard argument (c.f. \cite{key-14},
appendix D).

\subsection{The Localization Complex is a Sheaf}

For later use, we shall need one important fact about the localization
defined above:
\begin{lem}
The sheaf $\mbox{Loc}(M;F)(0)$ is isomorphic to the complex 
\[
D_{h}(\lambda,\chi)(0)\otimes_{U_{h}^{\lambda}(\mathfrak{g},e)(0)}^{L}\widehat{\mbox{Rees}(M;F)}
\]
In other words, this complex has no cohomology except in degree zero.
\end{lem}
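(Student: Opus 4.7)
The plan is to reduce the claim to a vanishing statement modulo $h$ and then exploit the geometry of the Springer--Slodowy resolution. First, since $U_{h}^{\lambda}(\mathfrak{g},e)(0)$ is Noetherian and $\widehat{\mbox{Rees}(M;F)}$ is finitely generated, I would choose a resolution $P^{\bullet}\to\widehat{\mbox{Rees}(M;F)}$ by finite-rank topologically free $U_{h}^{\lambda}(\mathfrak{g},e)(0)$-modules. Then $D_{h}(\lambda,\chi)(0)\otimes_{U_{h}^{\lambda}(\mathfrak{g},e)(0)}P^{\bullet}$ represents the derived tensor product, and our task becomes showing that its cohomology vanishes in negative degrees.

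Next, I would reduce modulo $h$. The module $\widehat{\mbox{Rees}(M;F)}$ is $h$-torsion free, since the Rees module of a good filtration is torsion free and this property is preserved under $h$-adic completion. Hence $P^{\bullet}/h$ gives a free resolution of $\mbox{gr}^{F}(M)$ over $\mathcal{O}(S_{\mathcal{N}})$. Because $D_{h}(\lambda,\chi)(0)$ is a quantization of $\tilde{S}_{\mathcal{N}}$, the reduction of $D_{h}(\lambda,\chi)(0)\otimes_{U_{h}^{\lambda}(\mathfrak{g},e)(0)}P^{\bullet}$ modulo $h$ computes $L\mu^{*}\widetilde{\mbox{gr}^{F}(M)}$, the derived pullback along the Springer resolution $\mu:\tilde{S}_{\mathcal{N}}\to S_{\mathcal{N}}$ applied to the coherent sheaf corresponding to $\mbox{gr}^{F}(M)$.

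The central step is then to prove that this derived pullback is concentrated in degree zero. Here I would use that $\mu|_{\tilde{S}_{\mathcal{N}}}$ is a symplectic resolution of the variety $S_{\mathcal{N}}$ (which in particular has rational singularities), combined with the fact that the support of $\mbox{gr}^{F}(M)$ in $S_{\mathcal{N}}$ is a conical coisotropic subvariety by Gabber's theorem. A flatness argument exploiting the Cohen--Macaulay structure of $\tilde{S}_{\mathcal{N}}$ and $S_{\mathcal{N}}$, together with the equidimensionality of the fibers of $\mu$ over this support, should then yield the vanishing. Once the mod-$h$ statement is in hand, a standard $h$-adic Nakayama argument --- using that $D_{h}(\lambda,\chi)(0)\otimes_{U_{h}^{\lambda}(\mathfrak{g},e)(0)}P^{\bullet}$ is a bounded complex of $h$-complete modules --- lifts the conclusion to the full $h$-complete setting.

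The main obstacle is this geometric step: proving exactness of $L\mu^{*}$ on all coherent sheaves arising as $\mbox{gr}^{F}(M)$ for finitely generated filtered $U^{\lambda}(\mathfrak{g},e)$-modules. While Beilinson--Bernstein type exactness theorems for quantizations of symplectic resolutions are well-studied, deploying them in the W-algebra setting will likely require a careful comparison with the Hamiltonian reduction realization of both $U_{h}^{\lambda}(\mathfrak{g},e)(0)$ and $D_{h}(\lambda,\chi)(0)$ from the corresponding quantizations on $\mathfrak{g}^{*}$ and $\tilde{\mathcal{N}}$, together with an analogue of the classical BB vanishing valid for antidominant regular $\lambda$.
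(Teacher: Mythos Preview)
Your reduction modulo $h$ is carried out correctly, but the ``central geometric step'' it leads to is simply false, so the strategy cannot succeed. Concretely, for a finite-dimensional $M$ the module $\mathrm{gr}^F(M)$ is a finite-length module supported at the point $\chi\in S_{\mathcal N}$, and it suffices to consider the skyscraper $k_\chi$. The map $\mu:\tilde S_{\mathcal N}\to S_{\mathcal N}$ is not flat at $\chi$ whenever $e$ is non-regular: for any closed point $x\in\mathcal B_\chi$ the dimension formula for flat local homomorphisms would force $\dim\mathcal O_{\tilde S_{\mathcal N},x}=\dim\mathcal O_{S_{\mathcal N},\chi}+\dim\mathcal O_{\mathcal B_\chi,x}$, i.e.\ $2\dim\mathcal B_\chi=2\dim\mathcal B_\chi+\dim\mathcal B_\chi$, a contradiction. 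Hence $L\mu^*k_\chi$ has nonzero cohomology in negative degrees, and no appeal to Gabber's theorem, Cohen--Macaulayness, or equidimensionality of fibres can repair this: the dimension obstruction already uses equidimensionality. Note too that passing to $h=0$ erases $\lambda$ entirely, so there is no way to invoke regularity or antidominance of $\lambda$ at that stage; any argument that works modulo $h$ would have to work for \emph{all} $\lambda$, which is certainly too much to expect.

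What this actually shows is that $\mathrm{Loc}(M;F)(0)$, while concentrated in degree zero, must have nontrivial $h$-torsion: your computation identifies $H^{-1}$ of the mod-$h$ complex with the $h$-torsion of $H^0$ of the original complex. So the Nakayama direction you want is unavailable. The paper proceeds quite differently and avoids the associated graded altogether: it uses Skryabin's equivalence together with an exact fully faithful pullback $p^*$ to transport the question to the full flag variety, where it becomes the assertion that $D^\lambda\otimes^L_{U^\lambda}N$ is concentrated in degree zero for a certain $U(\mathfrak g)$-module $N$. This is exactly the exactness of the Beilinson--Bernstein localization functor for regular $\lambda$, applied without ever reducing modulo $h$. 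Your closing remark that one would ultimately need ``an analogue of the classical BB vanishing'' is thus on target, but the paper obtains it by direct reduction to BB itself rather than by reproving it on the slice.
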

In the paper \cite{key-11}, we showed that this was case after inverting
the parameter $h$. However, this is not enough for our purposes here.
Therefore, we shall obtain the result by using the comparison with
the stronger localization result of \cite{key-32}. To explain this
comparison, let us recall the 
\begin{thm}
(Skryabin's equivalence) Let $Q$ denote the natural $(U(\mathfrak{g}),U(\mathfrak{g},e))$-bimodule
$U(\mathfrak{g})/I$ (c.f. section 2 above). Then the functor 
\[
V\to Q\otimes_{U(\mathfrak{g},e)}V
\]
 is an equivalence of categories from $U(\mathfrak{g},e)$-mod to
the category of $(\mathfrak{m}_{l},\chi)$-equivariant $U(\mathfrak{g},e)$-modules. 
\end{thm}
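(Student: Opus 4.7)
The plan is to construct an explicit quasi-inverse, the Whittaker-invariants functor
\[
\mbox{Wh}(M) := \{m \in M : (x-\chi(x))m = 0 \text{ for all } x\in\mathfrak{m}_{l}\},
\]
and then verify that the adjunction unit and counit of the resulting pair $(Q\otimes_{U(\mathfrak{g},e)}-,\,\mbox{Wh})$ are isomorphisms. The Hamiltonian-reduction description $U(\mathfrak{g},e)\cong (U(\mathfrak{g})/I)^{M_{l}}$ recalled in Section 2 endows $\mbox{Wh}(M)$ with a natural $U(\mathfrak{g},e)$-action: a class $[u]\in U(\mathfrak{g},e)$ represented by $u\in U(\mathfrak{g})$ with $[u,\mathfrak{m}_{l}]\subseteq I$ preserves Whittaker vectors in any $(\mathfrak{m}_{l},\chi)$-equivariant module, and the action is independent of the representative by definition of $I$.

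The unit and counit are the obvious candidates $\eta_{V}:V\to\mbox{Wh}(Q\otimes_{U(\mathfrak{g},e)}V)$, $v\mapsto 1\otimes v$, and $\epsilon_{M}:Q\otimes_{U(\mathfrak{g},e)}\mbox{Wh}(M)\to M$, $[u]\otimes m\mapsto u\cdot m$. The counit is well-defined precisely because every element of $I$ annihilates every Whittaker vector, and the triangle identities are formally immediate from the definitions.

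The central technical input is the Premet--Gan--Ginzburg theorem that $Q$ is free as a right $U(\mathfrak{g},e)$-module. Concretely, choosing a PBW-compatible basis of a graded complement to $\mathfrak{m}_{l}$ in $\mathfrak{g}$ with respect to the Kazhdan grading defined by $\gamma$ yields a basis of $Q$ over $U(\mathfrak{g},e)$ whose unique $\mathfrak{m}_{l}$-invariant element (in the sense of the $\chi$-twisted action) is $1$. From this one reads off $\mbox{Wh}(Q\otimes_{U(\mathfrak{g},e)}V)=1\otimes V$, so $\eta_{V}$ is always an isomorphism, and in addition the functor $Q\otimes_{U(\mathfrak{g},e)}(-)$ is exact.

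The main obstacle is showing that $\epsilon_{M}$ is an isomorphism, i.e., that every $(\mathfrak{m}_{l},\chi)$-equivariant module is generated by its Whittaker vectors with no further relations. I would attack this by equipping $M$ with a good Kazhdan filtration and passing to the associated graded: $\mbox{gr}(M)$ is a coherent sheaf on the subvariety of $\mathfrak{g}^{*}$ cut out by $\mbox{gr}(I)$, namely the affine subspace $\chi+\mathfrak{m}_{l}^{\perp}$. The Gan--Ginzburg isomorphism $M_{l}\times S\cong\chi+\mathfrak{m}_{l}^{\perp}$ then reduces the problem to the statement that such an $M_{l}$-equivariant sheaf is determined by its pullback to $S$, equivalently by its $\mathfrak{m}_{l}$-invariants, which is a clean consequence of the freeness of $O(\chi+\mathfrak{m}_{l}^{\perp})$ over $O(S)$. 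Lifting this graded conclusion back through the filtration gives surjectivity of $\epsilon_{M}$, and injectivity then follows from the exactness of $Q\otimes_{U(\mathfrak{g},e)}(-)$ combined with the already-established isomorphism $\eta_{\mbox{Wh}(M)}$ via the triangle identities.
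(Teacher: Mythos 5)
The paper gives no proof of this statement at all: it is quoted as Skryabin's theorem (proved in Skryabin's appendix to \cite{key-21} and reproved by Gan--Ginzburg \cite{key-13}), so there is nothing internal to compare your argument against. Your sketch is the Gan--Ginzburg route --- the adjoint pair $(Q\otimes_{U(\mathfrak{g},e)}-,\ \mathrm{Wh})$, freeness of $Q$ over $U(\mathfrak{g},e)$, the Kazhdan filtration, and the isomorphism $M_{l}\times S\cong\chi+\mathfrak{m}_{l}^{\perp}$ --- and that architecture is correct, but two steps are genuinely incomplete as written. First, the unit computation: $\mathrm{Wh}(Q\otimes_{U(\mathfrak{g},e)}V)=1\otimes V$ cannot be ``read off'' from a free right $U(\mathfrak{g},e)$-basis of $Q$ whose unique Whittaker element is $1$. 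Writing $Q=\bigoplus_{\alpha}b_{\alpha}U(\mathfrak{g},e)$, the operator $m-\chi(m)$ acts by $(m-\chi(m))b_{\alpha}=\sum_{\beta}b_{\beta}u_{\beta\alpha}$ with $u_{\beta\alpha}\in U(\mathfrak{g},e)$, so the Whittaker vectors of $Q\otimes V$ form the kernel of these matrices acting on $V$-valued vectors; knowing that kernel for $V=U(\mathfrak{g},e)$ (i.e.\ $\mathrm{Wh}(Q)=U(\mathfrak{g},e)$) does not determine it for an arbitrary module $V$. In the actual proofs this is one face of a cohomology-vanishing statement, $H^{i}(\mathfrak{m}_{l},(Q\otimes V)\otimes\mathbb{C}_{-\chi})=V$ for $i=0$ and $0$ for $i>0$, proved by the same filtration/Koszul argument you reserve for the counit, with the geometric input $M_{l}\times S\cong\chi+\mathfrak{m}_{l}^{\perp}$ entering at the associated-graded level.

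Second, your counit argument never invokes the local nilpotency hypothesis, and any correct proof must: without it the statement is false (a non-Whittaker module need not be generated by Whittaker vectors and may have none). Local nilpotency is exactly what produces, on a finitely generated Whittaker module, an exhaustive and bounded-below good Kazhdan filtration and forces the symbols of the elements $m-\chi(m)$ to act locally nilpotently on $\mathrm{gr}(M)$, so that the support lands in $\chi+\mathfrak{m}_{l}^{\perp}$; note that the Kazhdan filtration on $U(\mathfrak{g})$ itself is unbounded below, so boundedness of the module filtration is a point that must be argued and is also needed for convergence of the spectral sequence. Moreover $\mathrm{gr}(M)$ is in general only set-theoretically supported on $\chi+\mathfrak{m}_{l}^{\perp}$ (a power of the ideal kills it, not the ideal itself), so the literal descent of an $M_{l}$-equivariant coherent sheaf along $\chi+\mathfrak{m}_{l}^{\perp}\to S$ should be replaced by the Koszul-complex vanishing formulation; and since arbitrary Whittaker modules are not finitely generated, one must reduce to the finitely generated case by writing $M$ as a filtered union (the Whittaker category is closed under submodules, so this is harmless). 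Your deduction of injectivity of $\epsilon_{M}$ from the strict identification $\mathrm{Wh}(Q\otimes V)=1\otimes V$ and the fact that a nonzero Whittaker module has a nonzero Whittaker vector is fine once these points are supplied.
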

Let us recall that the latter category is the full subcategory of
$U(\mathfrak{g})$-modules such that the action of the operators $\{m-\chi(m)|m\in\mathfrak{m}_{l}\}$
is locally nilpotent. 

On the other hand, there is a similar relationship between $D_{h}(\lambda,\chi)(0)$-modules
and $D_{h}^{\lambda}(0)$-modules (where $D_{h}^{\lambda}(0)$ is
the sheaf of $h$-completed twisted differential opeators on $\mathcal{B}$;
c.f. section 4 below). Namely, there is a natural pull back functor
$p^{*}:Mod(D_{h}(\lambda,\chi)(0))\to Mod(D_{h}^{\lambda}(0))$ and
we have the 
\begin{thm}
The functor $p^{*}$ is exact and fully faithful. Further, there is
a compatibility 
\[
p^{*}(D_{h}(0,\chi)\otimes_{U_{h}(\mathfrak{g},e)(0)}^{L}\widehat{\mbox{Rees}(M)})\tilde{=}D_{h}(0)\otimes_{U_{h}(0)}^{L}\widehat{\mbox{Rees}(Q\otimes_{U(\mathfrak{g},e)}M)}
\]
 where the filtration on $Q\otimes_{U(\mathfrak{g},e)}M$ is induced
from the filtration on $M$ and the Kazhdan filtration on $Q$. 
\end{thm}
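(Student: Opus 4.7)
The plan is to view $p^{*}$ as the sheaf-theoretic counterpart of Skryabin's equivalence. Recall that the Slodowy slice arises as the classical Hamiltonian reduction of $\mathfrak{g}^{*}$ by the $\chi$-twisted action of $M_{l}$, and this lifts to the quantum statement $U(\mathfrak{g},e) \cong (U(\mathfrak{g})/I)^{M_{l}}$. The sheaf $D_{h}(\lambda,\chi)(0)$ on $\tilde{S}_{\mathcal{N}}$ should likewise be realized as a quantum Hamiltonian reduction of $D_{h}^{\lambda}(0)$ along the free $M_{l}$-action on an appropriate open subset of $\tilde{\mathcal{N}}$, and $p^{*}$ is the functor that carries a $D_{h}(\lambda,\chi)(0)$-module to its induced $(\mathfrak{m}_{l},\chi)$-equivariant $D_{h}^{\lambda}(0)$-module. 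With this identification, the theorem becomes a sheafified and quantized version of Skryabin's equivalence.

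To prove exactness and full faithfulness, I would first reduce modulo $h$. The reduction of $p^{*}$ is a pullback of coherent sheaves along a geometric morphism whose image is an open piece of $\tilde{\mathcal{N}}$ on which $M_{l}$ acts freely with quotient $\tilde{S}_{\mathcal{N}}$; for such a principal bundle situation, pullback is exact and gives a fully faithful embedding into the category of equivariant coherent sheaves. To lift this to the $h$-adically complete setting, I would use the flatness of $D_{h}(\lambda,\chi)(0)$ over $\mathbb{C}[[h]]$ together with a standard Nakayama and completeness argument; the exactness on the associated graded, combined with $h$-adic separatedness of the relevant modules, yields exactness and full faithfulness at the quantized level.

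For the compatibility formula, the key technical step is to establish the identification
\[
p^{*}(D_{h}(0,\chi)(0)) \cong D_{h}(0) \otimes_{U_{h}(0)}^{L} \widehat{\mbox{Rees}(Q)},
\]
where the Rees construction uses the Kazhdan filtration on $Q$. This should follow directly from the construction of $D_{h}(\lambda,\chi)(0)$ in \cite{key-11} as a quantum Hamiltonian reduction, once the filtrations are matched up. Granted this, the desired compatibility reduces to associativity of derived tensor products, combined with the identification
\[
\widehat{\mbox{Rees}(Q)} \otimes_{U_{h}^{\lambda}(\mathfrak{g},e)(0)}^{L} \widehat{\mbox{Rees}(M)} \cong \widehat{\mbox{Rees}(Q \otimes_{U(\mathfrak{g},e)} M)}
\]
for the filtration on $Q \otimes M$ induced by the Kazhdan filtration on $Q$ and the good filtration $F$ on $M$.

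The main obstacle I expect is precisely this last isomorphism. In general, $h$-adic completion commutes with neither ordinary tensor product nor the Rees functor, and the derived tensor product may have higher cohomology before completion is applied. To push through, I would exploit the $\mathbb{Z}$-graded structure from the Kazhdan filtration together with the flatness of $Q$ as a right $U(\mathfrak{g},e)$-module (a consequence of Skryabin's theorem) to show that the relevant higher Tors vanish term by term, so that both derived tensor product and Rees completion can be computed on graded pieces; the $h$-adic completion then plays a purely formal role, and the claimed identity of complete modules follows.
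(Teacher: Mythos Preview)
The paper does not actually prove this theorem in the text: immediately after the statement it simply says that ``the proof of this theorem follows from the results and constructions of \cite{key-11}; the same constructions are also explained in \cite{key-17},'' and then moves on to use the theorem as a black box. So there is no detailed argument to compare against; the paper's ``proof'' is a citation to the quantum Hamiltonian reduction machinery developed in \cite{key-11} and \cite{key-17}.

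Your proposal is essentially an outline of what that machinery says, and in that sense it is aligned with the paper. You correctly identify $p^{*}$ as the sheaf-level avatar of Skryabin's equivalence, coming from the realization of $D_{h}(\lambda,\chi)(0)$ as the quantum Hamiltonian reduction of $D_{h}^{\lambda}(0)$, and your strategy of reducing modulo $h$ to the free $M_{l}$-action and then lifting by $h$-adic completeness is exactly how these arguments go in \cite{key-11} and \cite{key-17}. One small imprecision: the relevant locus is not an \emph{open} piece of $\tilde{\mathcal{N}}$ but rather the level set $\mu^{-1}(\chi)$ of the $M_{l}$-moment map, which is a closed (affine-bundle-like) subvariety on which $M_{l}$ acts freely with quotient $\tilde{S}_{\mathcal{N}}$; the Gan--Ginzburg isomorphism $M_{l}\times S\;\tilde{\to}\;\chi+\mathfrak{m}_{l}^{\perp}$ is what makes this work. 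With that correction your mod-$h$ argument is fine.

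For the compatibility, your plan via associativity of derived tensor products and the identification $p^{*}(D_{h}(0,\chi)(0))\cong D_{h}(0)\otimes^{L}_{U_{h}(0)}\widehat{\mathrm{Rees}(Q)}$ is the right one, and the obstacle you flag (commuting completion with Rees and tensor) is real but surmountable exactly as you indicate, using that $Q$ is free as a right $U(\mathfrak{g},e)$-module (a standard consequence of the Gan--Ginzburg/Skryabin set-up) together with the Kazhdan grading. In short: your sketch is correct and is a faithful unpacking of what the cited references contain; the paper itself adds nothing beyond the citation.
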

The proof of this theorem follows from the results and constructions
of \cite{key-11}; the same constructions are also explained in \cite{key-17}.
The full image of $p^{*}$ can be described in similar terms to those
of Skryabin's equivalence. 

Therefore, we see that we only have to show that the complex 
\[
D_{h}(0)\otimes_{U_{h}(0)}^{L}\widehat{\mbox{Rees}(Q\otimes_{U(\mathfrak{g},e)}M)}
\]
 is concentrated in a single degree. However, if we equip the algebra
of differential operators $D$ and the universal enveloping algebra
$U$ with the Kazhdan filtration, then the compatibility 
\[
D_{h}(0)\otimes_{U_{h}(0)}^{L}\widehat{\mbox{Rees}(Q\otimes_{U(\mathfrak{g},e)}M)}=\widehat{\mbox{Rees}(D\otimes_{U}^{L}(Q\otimes_{U(\mathfrak{g},e)}M))}
\]

follows by taking a filtered free resolution of $Q\otimes_{U(\mathfrak{g},e)}M$.
Thus our problem is reduced to showing that the complex 
\[
D\otimes_{U}^{L}(Q\otimes_{U(\mathfrak{g},e)}M))
\]
 is concentrated in a single degree. But the functor $M\to D\otimes_{U}M$
is exact on all finitely generated $U(\mathfrak{g})$-modules by Beilinson-Bernstein
localization. So the result follows. 

The main objective of the next few sections will be to relate this
construction of a local object to the positive characteristic machinery
of \cite{key-5}, where the relation between $K$ groups of representations
and homology of springer fibres is very strong indeed.

\section{Localization In positive Characteristic}

In this section, we'll review the main results of the localization
theory for enveloping algebras in characteristic $p$, which can be
found in \cite{key-6} and \cite{key-5}. We recall that the lie algebra
$\mathfrak{g}$ has an integral form $\mathfrak{g}_{\mathbb{Z}}$
(c.f. \cite{key-28}), which then has a base extension to any field
$k$, called $\mathfrak{g}_{k}$. Throughout the rest of the paper,
we will use $k$ to denote an algebraically closed field of positive
characteristic. When $\mbox{char}(k)>h$ (where $h$ is the Coxeter
number of $\mathfrak{g}$), Bezrukavnikov-Mirkovic-Ruminyin have developed
a localization theory for the enveloping algebra $U(\mathfrak{g}_{k})$.
Since this theory is extremely important for us, we shall recall their
basic notations and results in some detail.

\subsection{Quantized Twisted Differential Operators}

We start with the quantized sheaf of twisted differential operators
on $T^{*}\mathfrak{\mathcal{B}}_{\mathbb{C}}$. We first recall that
the original sheaf of twisted differential operators can be defined
using the following two steps (c.f. \cite{key-20}, Chapter C1 for
details): 

First, one defines the sheaf of algebras $U^{0}=O_{\mathfrak{\mathcal{B}}}\otimes_{\mathbb{C}}U(\mathfrak{g})$,
where the multiplication is twisted by the action of an element of
$\mathfrak{g}$, considered as a vector field, on a local section
of $O_{\mathcal{B}}$. Inside $U^{0}$, we have the sub-ideal-sheaf
$\mathfrak{n}^{0}$, which is generated at each point $x\in\mathfrak{\mathcal{B}}$
by the subspace $\mathfrak{n}_{x}\in\mathfrak{g}$ (thinking of $\mathfrak{\mathcal{B}}$
as the variety of Borel subalgebras in $\mathfrak{g}$, each point
gets a Borel $\mathfrak{b}_{x}$ and a corresponding maximal nilpotent
subalgebra; this is $\mathfrak{n}_{x}$). We also, therefore, have
an ideal sheaf $\mathfrak{b}^{0}$ inside $U^{0}$, and containing
$\mathfrak{n}^{0}$. 

From here, we define the sheaf of algebras $D_{\mathfrak{h}}=U^{0}/\mathfrak{n}^{0}$.
Thus there is a natural map from the sheaf of lie algebras $\mathfrak{h}^{0}=\mathfrak{b}^{0}/\mathfrak{n}^{0}$
to the sheaf $D_{\mathfrak{h}}$, which then induces a map $\phi:U(\mathfrak{h})\to\Gamma(\mathfrak{\mathcal{B}},U^{0}/\mathfrak{n}^{0})$.
For any element $\lambda\in\mathfrak{h}^{*}$, we have an ideal $I_{\lambda}\subseteq U(\mathfrak{h})$,
which, due to normalization reasons, is the ideal chosen to correspond
the character $\lambda+\rho$ (where $\rho$ is the half sum of the
positive roots, as usual). Thus we have an ideal $I_{\lambda}D_{\mathfrak{h}}$,
and we can finally put $D^{\lambda}=D_{\mathfrak{h}}/I_{\lambda}D_{\mathfrak{h}}$. 

Now, we can quantize each step of this construction in a natural way.
We start by defining, for an affine cover of $\mathfrak{\mathcal{B}}$,
$\{U_{i}\}$ the sheaves $U_{h}^{0}=O_{U_{i}}\otimes_{\mathbb{C}[h]}U_{h}(\mathfrak{g})$,
where $U_{h}(\mathfrak{g})$ is simply $\mbox{Rees}(U(\mathfrak{g}))$
(with respect to the usual PBW filtration). These are not merely sheaves
on the varieties $U_{i}$, but, by using ore localization, we can
view them as quantizations%
\footnote{In fact, these are not quite quantizations as we have defined them
above, because these algebras are not compete with respect to $h$.
However, they are $h$-free, and their $h$-completions, considered
below, would be quantizations in the strict sense. %
} of the varieties $U_{i}\times\mathfrak{g}$ (we get the structure
algebras of these varieties by setting $h=0$). This construction
glues naturally, and so we get a sheaf $U_{h}^{0}$ on $\mathfrak{\mathcal{B}}\times\mathfrak{g}$. 

Given this, we still have subsheaves $\mathfrak{b}^{0}$ and $\mathfrak{n}^{0}$
generated by the same elements, and so we can consider the quotient
$D_{h,\mathfrak{h}}=U_{h}^{0}/\mathfrak{n}^{0}$, with its associated
map $U_{h}(\mathfrak{h})\to\Gamma(D_{h,\mathfrak{h}})$. Then the
element $\lambda\in\mathfrak{h}^{*}$ still defines an ideal of $U_{h}(\mathfrak{h})$
(defined as the ideal generated by $\{v-h(\lambda+\rho)(v)|v\in\mathfrak{h}\}$)
again called $I_{\lambda}$, and we can now define $D_{h}^{\lambda}=D_{h,\mathfrak{h}}/I_{\lambda}D_{h,\mathfrak{h}}$,
which are now sheaves on the space $T^{*}\mathcal{B}$. We also note
that the sheaf $D_{h,\mathfrak{h}}$ can be considered a sheaf on
the space $\tilde{\mathfrak{g}^{*}}$- the full Grothendeick alteration
(c.f. \cite{key-9}, chapter 3)

\subsection{Differential Operators in Positive Characteristic}

Now suppose, in addition to the assumptions of the above section,
that the element $\lambda\in\mathfrak{h}^{*}$ is integral. Then every
object that we have used in the above construction; the enveloping
algebra $U(\mathfrak{g})$, the group $G$, its Borel and nilpotent
subalgebras, and flag variety, exists over $\mathbb{Z}$. Therefore,
it makes perfect sense to construct the algebra $D_{h}^{\lambda}(\mathbb{Z})$
as a quantization of $T^{*}\mathcal{B}(\mathbb{Z})$, and further,
to base change to an algebraically closed field of positive characteristic,
and thus obtain an object $D_{h}^{\lambda}(k)$ (from now on in this
section we shall drop the $k$, understanding that we are working
over a closed field of positive characteristic) . Upon taking the
quotient $D_{h}^{\lambda}/(h-1)$, we obtain the sheaf of crystalline
differential operators as featured in \cite{key-5}, which we will
simply denote $D^{\lambda}$ . This sheaf has the feature that there
is a {}``frobenius morphism'' 
\[
F:O(T^{*}\mathfrak{\mathcal{B}}^{(1)})\to D^{\lambda}
\]
 obtained by using $p^{th}$ iterates of vector fields, c.f. \cite{key-5},
section 2. 

In the case of the sheaf $D_{h}^{\lambda}$, we can lift $F$ to a
morphism $F:O(T^{*}\mathfrak{\mathcal{B}}^{(1)}\times\mathbb{A}^{1})\to D_{h}^{\lambda}$
(simply by sending the extra variable to $h$). This means that $D_{h}^{\lambda}$
(technically, its $h$-completion) is a {}``Frobenius constant quantization''
in the terminology of \cite{key-3}. Let us recall the definition
there: 
\begin{defn}
Let $O_{h}$ be a quantization of the Poisson scheme $X$ (defined
over $k$). Then $O_{h}$ is a Frobenius constant quantization if
the Frobenius morphism $F:O_{X}^{p}\to O_{X}$ lifts to a morphism
$F:O_{h}\to Z(O_{h})$ (where $Z$ is the algebra center). 
\end{defn}
In this case, the sheaf $O_{h}$ can be regarded as a locally free
coherent sheaf of algebras on the scheme $X^{(1)}\times\mbox{Spec}(k[[h]])$. 

In the case of the sheaf $D_{h,\mathfrak{h}}$, we can even say a
bit more. By the same reasoning, there is a morphism $F:\tilde{\mathfrak{g}^{*}}^{(1)}\to D_{h,\mathfrak{h}}$.
But in fact there is also a morphism in any characteristic (even over
$\mathbb{Z}$) $O(\mathfrak{h}^{*})\to D_{h,\mathfrak{h}}$, simply
by noting that $U(\mathfrak{h})\to D_{h,\mathfrak{h}}$by construction. 

Now, in positive characteristic, both of the schemes $\tilde{\mathfrak{g}^{*}}^{(1)}$
and $\mathfrak{h}^{*}$ live over the scheme $\mathfrak{h}^{*,(1)}$.
The morphism $\tilde{\mathfrak{g}^{*}}^{(1)}\to\mathfrak{h}^{*,(1)}$
is Grothendieck's invariants map (c.f. {[}CG{]} chapter 3), and the
map $\mathfrak{h}^{*}\to\mathfrak{h}^{*,(1)}$ is the Artin-schreier
map for $p-$lie algebras, which on algebras of functions is the morphism
$\mbox{Sym}(\mathfrak{h}^{(1)})\to\mbox{Sym}(\mathfrak{h})$ which
takes $h\to h^{p}-h^{[p]}$ (c.f. \cite{key-5}, section 2.3). 

By comparing images, we see that we actually arrive at a morphism
$\tilde{\mathfrak{g}^{*}}^{(1)}\times_{\mathfrak{h}^{*,(1)}}\mathfrak{h}^{*}\to D_{h,\mathfrak{h}}$,
which will play the role of the Frobenius morphism for this sheaf.

\subsection{Localization For Lie Algebras.}

As in the classical case, localization involves comparing modules
over a sheaf of differential operators to modules over the global
sections. For $\lambda\in\mathfrak{h}_{k}$, we have that 
\[
\Gamma(D^{\lambda})\tilde{=}U^{\lambda}(\mathfrak{g}_{k})
\]

where $U^{\lambda}(\mathfrak{g}_{k})$ is the quotient of the algebra
$U(\mathfrak{g}_{k})$ by the ideal $J_{\lambda}$ obtained as follows:
the algebra $S(\mathfrak{h}_{k})^{(W,\cdot)}$ occurs as a subalgebra
of $Z(U(\mathfrak{g}_{k}))$. As in characteristic zero, this subalgebra
is equal to $U(\mathfrak{g}_{k})^{G_{k}}$, but unlike in characteristic
zero, this is not the whole center. Still, any point in $\mathfrak{h}_{k}$
defines an ideal of $S(\mathfrak{h}_{k})^{(W,\cdot)}$, which can
then be extended to an ideal of $U(\mathfrak{g}_{k})$. 

Then, if $\lambda\in\mathfrak{h}_{k}$ is regular with respect to
the dot-action of $W$, we have (\cite{key-5}): 
\begin{thm}
There is an equivalence of categories 
\[
R\Gamma:D^{b}(mod^{c}(D^{\lambda}))\to D^{b}(mod^{f.g.}(U^{\lambda}))
\]
 where $mod^{c}(D^{\lambda})$ denotes the category of coherent modules
over the Azumaya algebra $D^{\lambda}$ on $T^{*}\mathcal{B}_{k}^{(1)}$.
The inverse functor is given by 
\[
\mathcal{L}^{\lambda}(M)=D^{\lambda}\otimes_{U^{\lambda}}^{L}M
\]

\end{thm}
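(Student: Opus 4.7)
The plan is to establish the derived equivalence via the standard adjunction $(\mathcal{L}^\lambda, R\Gamma)$, showing that both the unit and counit are isomorphisms. I would break the argument into four stages.

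First, I would compute $R\Gamma(T^*\mathcal{B}_k^{(1)}, D^\lambda)$, showing it equals $U^\lambda$ concentrated in degree zero. The identification in degree zero is already recorded. To show vanishing of higher cohomology, I would pass to the graded quantization $D_h^\lambda$ and exploit the contracting $\mathbb{G}_m$-action on $T^*\mathcal{B}$: this reduces cohomology of $D_h^\lambda$ to cohomology of its associated graded, which is a pushforward $\pi_* \mathcal{O}_{\tilde{\mathfrak{g}}^{*}}$ along the Grothendieck alteration, and such cohomology vanishes by Grauert--Riemenschneider (lifted to the integral model). Specializing $h \mapsto 1$ recovers the assertion for $D^\lambda$.

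With this in hand, the counit is essentially formal: for $M \in D^b(\mathrm{mod}^{f.g.}(U^\lambda))$, a projection-formula argument (valid because $D^\lambda$ is a locally free sheaf on $T^*\mathcal{B}_k^{(1)}$ via the Azumaya structure) gives
\[
R\Gamma \circ \mathcal{L}^\lambda(M) \;=\; R\Gamma(D^\lambda) \otimes_{U^\lambda}^L M \;=\; U^\lambda \otimes_{U^\lambda}^L M \;=\; M.
\]
To know the composite lands in the bounded derived category, one needs that $\mathcal{L}^\lambda$ preserves boundedness. Here I would appeal to finite homological dimension of both sides: on the sheaf side this follows from smoothness of $T^*\mathcal{B}_k^{(1)}$ combined with the Azumaya property, and on the algebra side from a PBW/Koszul style resolution controlled by regularity of $\lambda$.

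The hard step is the unit, i.e., essential surjectivity of $\mathcal{L}^\lambda$, equivalently conservativity of $R\Gamma$ on $D^b(\mathrm{mod}^c(D^\lambda))$. The strategy is to use the splittings of the Azumaya algebra $D^\lambda$ on formal neighbourhoods of fibres of the moment map $T^*\mathcal{B}_k^{(1)} \to \mathcal{N}_k^{(1)}$, which identify $D^\lambda$-modules locally (after completion) with coherent sheaves on $T^*\mathcal{B}_k^{(1)}$ twisted by a splitting bundle. Combined with the contracting $\mathbb{G}_m$-action, every point of $T^*\mathcal{B}_k^{(1)}$ is in the closure of a torus orbit meeting the zero section, so a coherent $D^\lambda$-module with no global derived sections must have empty support and hence vanish. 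The main obstacle I anticipate is turning this local/formal picture into a genuine global statement at the level of the derived category, which requires a careful dévissage using the exotic t-structure and the regularity of $\lambda$ to rule out pathologies; this is the heart of the Bezrukavnikov--Mirkovi\'c--Rumynin argument and cannot be circumvented by characteristic-zero considerations alone.
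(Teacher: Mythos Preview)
Your overall architecture---establish $R\Gamma(D^\lambda)\cong U^\lambda$ in degree zero, deduce full faithfulness of $\mathcal{L}^\lambda$ from this via free resolutions and finite homological dimension, then prove essential surjectivity---matches the paper's summary of the Bezrukavnikov--Mirkovi\'c--Rumynin argument. (The paper does not reprove this theorem; it quotes it from \cite{key-5} and records only the key ingredients.)

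Where you diverge is precisely at the hard step, essential surjectivity. The paper singles out three inputs: the cohomology vanishing, finite homological dimension of $U^\lambda$, and \emph{triviality of the canonical class of $T^*\mathcal{B}$}. It is this last point that drives the surjectivity argument in \cite{key-5}: because $\omega_{T^*\mathcal{B}}$ is trivial, Grothendieck--Serre duality on $D^b(\mathrm{mod}^c(D^\lambda))$ is just the shift $[\dim\mathcal{B}]$. The essential image of $\mathcal{L}^\lambda$ is then a thick triangulated subcategory stable under Serre duality, and a categorical argument (a Calabi--Yau style observation) forces such a subcategory to be everything. Your proposal does not mention the canonical class or Serre duality at all.

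Instead you sketch a conservativity argument for $R\Gamma$ via the Azumaya splitting along formal neighbourhoods of Springer fibres together with a $\mathbb{G}_m$-contraction. This is not the route taken, and as written it has a gap you yourself flag: the splitting is only formal-local and the sheaf in question need not be $\mathbb{G}_m$-equivariant, so ``no derived sections $\Rightarrow$ empty support'' is not immediate. You then attribute the missing d\'evissage to the exotic $t$-structure, but that is a later layer of the theory; the actual BMR proof of this particular equivalence does not invoke it. In short, the crucial idea you are missing is the Serre duality/trivial canonical class trick---once you have that, the essential surjectivity step is much shorter and more robust than the route you outline.

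One smaller remark: in your first step you invoke Grauert--Riemenschneider ``lifted to the integral model''. That theorem is characteristic-zero; in positive characteristic one either argues directly for $T^*\mathcal{B}$ (as BMR do) or deduces vanishing for large $p$ by semicontinuity from the characteristic-zero statement, as the present paper does later for the Slodowy-slice variants.
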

Let us make a few remarks about the proof of this theorem. The key
ingredients are the following three facts: 

\begin{equation}
R\Gamma(D^{\lambda})\tilde{=}\Gamma(D^{\lambda})\tilde{=}U^{\lambda}
\end{equation}

\begin{equation}
T^{*}\mathcal{B}\mbox{ is variety with trivial canonical class}
\end{equation}

\begin{equation}
U^{\lambda}\mbox{ has finite homological dimension}
\end{equation}
 Given these asumptions, the fact that the two functors are adjoint
is general nonsense. Further, by the definitions, we have $R\Gamma(\mathcal{L}^{\lambda}(U^{\lambda})=R\Gamma(D^{\lambda})=U^{\lambda}$.
This implies, by using free resolutions (and the finite homological
dimension of $U^{\lambda}$), that $\mathcal{L}^{\lambda}$ is a fully
faithful functor. Thus the difficulty is in showing that it is essentially
surjective. This is accomplished by using the assumption on the triviality
of the canonical class. This assumption implies that the Grothendeick-Serre
duality is simply given by the shift functor $[\mbox{dim}\mathcal{B}]$.
This means that the essential image of $\mathcal{L}^{\lambda}$, which
is a triangulated subcategory, is closed under the action of this
functor. This is a very strong categorical condition, which can be
used to show that in fact the essential image of $\mathcal{L}^{\lambda}$
is the entire category.

\subsubsection{Azumaya Splitting}

One major difference between the positive characteristic theory and
the theory in characteristic zero is the fact modules over Azumaya
algebras are closely related to coherent sheaves. To see how this
plays out, one must exploit the full center of the algebra $U(\mathfrak{g}_{k})$.
As mentioned above, the subalgebra $U(\mathfrak{g}_{k})^{G_{k}}$
does not equal the full $Z(U(\mathfrak{g}_{k}))$. The reason for
this is the existence of the so-called $p^{th}$ iterate map $\mathfrak{g}\to\mathfrak{g}^{(1)}$,
denoted $x\to x^{[p]}$ (where $(1)$ again denotes frobenius twist).
In the case of $\mathfrak{gl}(n)$, this map is simply the $p^{th}$
power of a matrix; and one can define it in general (for a reductive
lie algebra) by using a suitable embedding $\mathfrak{g}\to\mathfrak{gl}(n)$. 

This map allows us to construct new central elements as follows: for
any $x\in\mathfrak{g}$, the element $x^{p}-x^{[p]}\in U(\mathfrak{g}_{k})$
is now central. In fact, the algebra generated by these elements,
called the $p$-center of $U(\mathfrak{g}_{k})$, denoted $Z_{p}$,
is isomorphic to $S(\mathfrak{g}^{(1)})$, because the scalar multiplication
on $x^{p}-x^{[p]}$ is twisted by the $p^{th}$ power. Then, one can
describe the full center of the enveloping algebra in positive characteristic
(c.f. \cite{key-5}, chapter 3) as 
\[
Z\tilde{=}O(\mathfrak{g}^{*(1)}\times_{\mathfrak{h}^{*(1)}/W}\mathfrak{h}^{*}/W)
\]
 where $W$ acts via the dot action, and the map from $\mathfrak{h}^{*}\to\mathfrak{h}^{*(1)}$
is the Artin-Schreier map.

Thus, given $\chi\in\mathfrak{g}^{*}$, we can associate to it an
element $\chi^{(1)}\in\mathfrak{g}^{(1),*}$ (c.f. \cite{key-4},
section 3.2), and from this we get a maximal ideal of $S(\mathfrak{g}^{(1)})$,
and a central ideal of $U(\mathfrak{g}_{k})$. Any irreducible module
over $U(\mathfrak{g}_{k})$ will have a well defined central character,
which will restrict to characters of both $Z_{p}$ and $U(\mathfrak{g}_{k})^{G_{k}}$
(the latter is called the Harish-Chandra center of $U(\mathfrak{g}_{k})$).
If we suppose that the Harish-Chandra character is integral, then
this implies that the $Z_{p}$-character is a nilpotent element of
$\mathfrak{g}^{*}$ (\cite{key-5}, section 6). We will assume that
we are in this situation from now on. 

Then the above localization theorem can be modified as follows: Let
$\lambda$ be a regular, integral central character, and let $\chi^{(1)}\in\mathfrak{g}^{*(1)}$
be a nilpotent element. Then we have $\mu^{-1}(\chi)^{(1)}=(\mu^{(1)})^{-1}(\chi^{(1)})$,
the frobenius twist of the springer fibre of $\chi$, which we shall
usually denote $\mathcal{B}_{\chi}$. We define the category $mod_{\chi}^{c}(D^{\lambda})$
to be the category of coherent $D^{\lambda}$ modules which are set-theoretically
supported on the variety $\mathcal{B}_{\chi}^{(1)}$. Similarly, we
define $mod_{\chi}^{f.g.}(U^{\lambda})$ to be the category of $U^{\lambda}$-modules
which have generalized $p$-character $\chi$- i.e., they are killed
by some power of the maximal ideal in $S(\mathfrak{g}_{k})$ associated
to $\chi$. Then we have:
\begin{thm}
. The the above localization theorem specializes to an equivalence
of categories
\[
R\Gamma:D^{b}(mod_{\chi}^{c}(D^{\lambda}))\to D^{b}(mod_{\chi}^{f.g.}(U^{\lambda}))
\]
 
\end{thm}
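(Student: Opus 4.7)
The plan is to deduce this refinement directly from the ambient equivalence $R\Gamma: D^b(mod^c(D^\lambda)) \to D^b(mod^{f.g.}(U^\lambda))$ of the preceding theorem, by verifying that the full subcategories cut out by the $p$-character condition correspond to each other under the equivalence. The key structural input is the Azumaya/Frobenius picture recalled just above: the Frobenius map $F: O(T^*\mathcal{B}^{(1)}) \to D^\lambda$ identifies the Azumaya center of $D^\lambda$ with $O(T^*\mathcal{B}^{(1)})$, and on global sections it recovers the inclusion of the $p$-center $Z_p \cong S(\mathfrak{g}^{(1)})$ into $U^\lambda$, with the comparison between the $Z_p$-action on $U^\lambda$-modules and the $O(T^*\mathcal{B}^{(1)})$-action on $D^\lambda$-modules mediated by the Frobenius twist of the moment map $\mu: T^*\mathcal{B} \to \mathfrak{g}^*$.

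First I would translate the support condition into a central annihilation condition. For a coherent $D^\lambda$-module $\mathcal{F}$, set-theoretic support on $\mathcal{B}_\chi^{(1)} = (\mu^{(1)})^{-1}(\chi^{(1)})$ is equivalent to the condition that some power of the maximal ideal $\mathfrak{m}_{\chi^{(1)}} \subset S(\mathfrak{g}^{(1)})$ annihilates $\mathcal{F}$ via the Frobenius center, since $\mathcal{B}_\chi^{(1)}$ is cut out in $T^*\mathcal{B}^{(1)}$ precisely by the ideal $\mu^*\mathfrak{m}_{\chi^{(1)}}$. On the other side, $M \in mod^{f.g.}(U^\lambda)$ has generalized $p$-character $\chi$ by definition if some power of $\mathfrak{m}_{\chi^{(1)}}$ kills $M$.

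Next I would check that $R\Gamma$ intertwines these conditions. Since $R\Gamma$ is a $Z_p$-linear functor (the $p$-center acts through the Frobenius center of $D^\lambda$, which is preserved by the Azumaya structure), an object $\mathcal{F} \in D^b(mod^c(D^\lambda))$ has all cohomology sheaves annihilated by a power of $\mathfrak{m}_{\chi^{(1)}}$ if and only if each $H^i(R\Gamma(\mathcal{F}))$ is annihilated by a power of $\mathfrak{m}_{\chi^{(1)}}$, i.e. lies in $mod^{f.g.}_\chi(U^\lambda)$. Conversely, for $M \in mod^{f.g.}_\chi(U^\lambda)$ the complex $\mathcal{L}^\lambda(M) = D^\lambda \otimes^L_{U^\lambda} M$ is, in each cohomological degree, annihilated by a power of $\mathfrak{m}_{\chi^{(1)}}$ acting through $F$, so its cohomology sheaves are set-theoretically supported on $\mathcal{B}_\chi^{(1)}$.

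Finally, one must identify $D^b(mod_\chi^c(D^\lambda))$ with the full subcategory of $D^b(mod^c(D^\lambda))$ consisting of complexes whose cohomology lies in $mod_\chi^c$, and similarly on the $U^\lambda$ side; this is standard because both $mod_\chi^c$ and $mod_\chi^{f.g.}$ are Serre subcategories (being defined by annihilation by powers of a fixed ideal, hence closed under subobjects, quotients, and extensions). Combining the three steps, the equivalence of the previous theorem restricts to the claimed one. The main point requiring care, and the only real obstacle, is the compatibility of $R\Gamma$ with the $Z_p$-action, which depends on the Frobenius-constant structure of $D^\lambda$; once this is in place the restriction is formal.
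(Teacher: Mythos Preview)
Your proposal is correct and follows the standard argument. Note, however, that the paper itself does not prove this theorem: Section~4 is explicitly a review of results from \cite{key-5}, and this statement is simply recalled from there without proof. So there is no ``paper's own proof'' to compare against.

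That said, your argument is precisely the one used in \cite{key-5}: both $R\Gamma$ and $\mathcal{L}^\lambda$ are linear over the $p$-center $Z_p \cong O(\mathfrak{g}^{*(1)})$, whose action on $D^\lambda$-modules is via the Frobenius map composed with $\mu^{(1)}$; hence the generalized eigenvalue condition with respect to $\mathfrak{m}_{\chi^{(1)}}$ is preserved by both functors, and the ambient equivalence restricts. Your identification of $D^b(mod_\chi)$ with the full subcategory of $D^b(mod)$ on complexes with cohomology in $mod_\chi$ is the remaining point; this is standard for Serre subcategories, and you flag it appropriately. One small refinement worth making explicit: to pass the nilpotence condition through $R\Gamma$, use that a bounded complex with finitely many cohomology objects each killed by $\mathfrak{m}^N$ is itself killed (in the derived category) by a suitable power of $\mathfrak{m}$, via induction on the number of nonzero cohomologies and the triangulated structure.
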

Finally, we can say a little more about the structure of the category
$mod_{\chi}^{c}(D^{\lambda})$. Since $D^{\lambda}$ is an Azumaya
algebra, it is etale locally a matrix algebra over $O_{T^{*}\mathfrak{\mathcal{B}}^{(1)}}$.
We recall that an Azumaya algebra $\mathfrak{A}$ on a scheme $X$
is said to be \emph{split} if it is in fact isomorphic to a matrix
algebra over $O_{X}$. If this happens, then there is the standard
Morita equivalence of categories $mod^{c}(\mathfrak{A})\tilde{\to}mod^{c}(O_{X})$.
Although $D^{\lambda}$ is not split, we have the following: 
\begin{thm}
Under the same assumptions as above, we have that the restriction
of $D^{\lambda}$ to the formal neighborhood of $\mathcal{B}_{\chi}^{(1)}$
in $T^{*}\mathcal{B}{}^{(1)}$ is a split Azumaya algebra. This implies
an equivalence 
\[
mod_{\chi}^{c}(D^{\lambda})\tilde{\to}mod_{\mathcal{B}_{\chi}^{(1)}}^{c}(O_{T^{*}\mathcal{B}^{(1)}})
\]
And therefore an equivalence 
\[
D^{b}(mod_{\mathcal{B}_{\chi}^{(1)}}^{c}(O_{T^{*}\mathfrak{\mathcal{B}}^{(1)}}))\tilde{\to}D^{b}(mod_{\chi}^{f.g.}(U^{\lambda}))
\]

\end{thm}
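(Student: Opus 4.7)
The heart of the statement is the splitting assertion for $D^{\lambda}$ on the formal neighborhood of $\mathcal{B}_{\chi}^{(1)}$; the two equivalences of categories are then formal consequences. The plan is to first establish the splitting following the strategy of \cite{key-5}, then derive the Morita equivalence, and finally combine with the preceding theorem to obtain the derived equivalence.

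For the splitting, I would construct a vector bundle $\mathcal{E}$ on the formal neighborhood $\widehat{\mathcal{B}_{\chi}^{(1)}}$ such that $D^{\lambda}|_{\widehat{\mathcal{B}_{\chi}^{(1)}}}\tilde{=}\mbox{End}(\mathcal{E})$. The key structural inputs are the Frobenius constant structure on $D_{h}^{\lambda}$ recalled earlier (giving the map $F:O(T^{*}\mathcal{B}^{(1)})\to D^{\lambda}$), the description of the full center $Z\tilde{=}O(\mathfrak{g}^{*(1)}\times_{\mathfrak{h}^{*(1)}/W}\mathfrak{h}^{*}/W)$, and the triviality of the canonical class of $T^{*}\mathcal{B}$. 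The Azumaya algebra $D^{\lambda}$ on $T^{*}\mathcal{B}^{(1)}$ has nontrivial Brauer class in general; one shows that this class becomes trivial after restriction to the formal neighborhood of $\mathcal{B}_{\chi}^{(1)}$. In the language of \cite{key-5}, the point is that imposing the $p$-character $\chi^{(1)}$ forces us into a subscheme of $T^{*}\mathcal{B}^{(1)}$ on which the Azumaya algebra admits an explicit splitting bundle, and this subscheme contains the formal neighborhood of the Springer fibre.

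Given the splitting, fix a splitting bundle $\mathcal{E}$. The Morita functor $M\mapsto \mbox{Hom}_{D^{\lambda}}(\mathcal{E},M)$ then furnishes an equivalence of abelian categories $mod_{\chi}^{c}(D^{\lambda})\tilde{\to}mod_{\mathcal{B}_{\chi}^{(1)}}^{c}(O_{T^{*}\mathcal{B}^{(1)}})$, using the standard fact that a coherent module set-theoretically supported on a closed subvariety is the same as a coherent module on any formal neighborhood of that subvariety. Passing to bounded derived categories and composing with the equivalence $R\Gamma:D^{b}(mod_{\chi}^{c}(D^{\lambda}))\to D^{b}(mod_{\chi}^{f.g.}(U^{\lambda}))$ of the preceding theorem yields the final derived equivalence.

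The main obstacle is constructing the splitting bundle itself. The standard argument for splitting a symplectic Azumaya algebra on a formal neighborhood applies most directly when the subvariety in question is a smooth Lagrangian, whereas Springer fibres $\mathcal{B}_{\chi}$ are typically singular, reducible, and (for $\chi$ not of the zero orbit) of dimension strictly smaller than $\dim\mathcal{B}$, so they are not themselves Lagrangian in $T^{*}\mathcal{B}^{(1)}$. The BMR resolution of this difficulty is delicate: one uses the fibred product structure on $Z$, together with the Artin-Schreier map, to match the $p$-curvature with the Harish-Chandra character, and then exploits the trivial canonical class of $T^{*}\mathcal{B}$ to promote a splitting at the level of a single closed point to a compatible splitting over the whole formal neighborhood. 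I expect to simply invoke the result in this form from \cite{key-5} rather than re-prove it.
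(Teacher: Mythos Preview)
Your overall plan is correct and matches the paper: this theorem is stated there as a direct citation of \cite{key-5}, and the two equivalences are exactly the formal Morita consequence of the splitting together with composition with the preceding localization theorem.

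However, your sketch of \emph{how} BMR obtain the splitting is off, and since the paper later recalls that argument (in Section~5.5, when adapting it to the $W$-algebra setting), the discrepancy is worth noting. The BMR proof does not proceed via Lagrangian considerations or the triviality of the canonical class. Rather, for an \emph{unramified} weight one invokes Brown--Gordon \cite{key-7}, who show that $U(\mathfrak{g})$ is Azumaya over the unramified locus of $\mbox{Spec}(Z)$; BMR then deduce that over this locus $\tilde{D}$ is the pullback of $U(\mathfrak{g})$ along the map $\tilde{\mathfrak{g}^{*}}^{(1)}\times_{\mathfrak{h}^{*(1)}}\mathfrak{h}^{*}\to\mbox{Spec}(Z)$. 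Consequently the restriction of $D^{\lambda}$ to the formal neighborhood of $\mathcal{B}_{\chi}^{(1)}$ is pulled back from an Azumaya algebra on the formal neighborhood of the single point $\chi^{(1)}\in\mathfrak{g}^{*(1)}$, and any Azumaya algebra over a complete local ring splits. A general integral $\lambda$ is reduced to the unramified case by twisting with a line bundle. The Lagrangian/canonical-class mechanism you describe is the Bezrukavnikov--Kaledin approach of \cite{key-3}, which is a different (more general, less explicit) route and is not what is being cited here; in particular the ``obstacle'' you identify about Springer fibres failing to be smooth Lagrangians simply does not arise in the BMR argument.
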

Thus we observe a tight relation between representation theory and
coherent sheaves in characteristic $p$. In the next several sections
we relate this theory to the theory of W-algebras.

\section{Localization For Modular W-Algebras}

In this section, we would like to construct the {}``local'' object
which corresponds to the version of the $W$-algebra in positive characteristic
(we shall give a compete definition of this object in section six
below). This object will, in particular, be a sheaf on the scheme
$\tilde{S}_{\mathcal{N}}^{(1)}$ (the Frobenius twist of the resolution
of the Slodowy slice). It will also be necessary to introduce a version
over the extended scheme $\tilde{S}^{(1)}\times_{\mathfrak{h}^{*(1)}}\mathfrak{h}^{*}$,
which will be completely parallel to the first version. We'll begin
by recalling, briefly, the construction of the main object of the
paper \cite{key-11}, which is a quantization of the scheme $\tilde{S}_{\mathcal{N},\mathbb{C}}$.
We shall attempt to keep this paper as self contained as possible.

\subsection{Hamiltonian Reduction}

As indicated above, the W-algebra is related to the lie algebra via
the procedure of Hamiltonian reduction. Since modules over lie algebras
can localized to $D$-modules on the flag variety $\mathcal{B}$,
it stands to reason that modules over W-algebras should localize to
modules over a some sort of Hamiltonian reduction of $D_{\mathfrak{\mathcal{B}}}$.
One version of this (over $\mathbb{C}$) was worked out in \cite{key-11}.
Here, we wish to construct a modular version of this localization,
based on the ideas of \cite{key-5} and the related paper \cite{key-4}. 

In particular, we shall take the Hamiltonian reduction of differential
operators $D_{h}^{\lambda}$ with respect to the action of Premet's
subgroup $M_{\mathfrak{l}}$, and character $\chi$. In particular,
we work under the assumption that $char(k)$ is large enough so that
the lie algebra $\mathfrak{m}_{l}$ (c.f. section 2 above) exists
as over $k$, and satisfies $\mathfrak{m}_{l}^{[p]}=0$ (where $[p]$
is the $p^{th}$ iterate map induced from $\mathfrak{g}$) (note that
this is possible because $\mathfrak{m}_{l}$ is nilpotent). 

We recall that the action of $G$ on the flag variety $\mathfrak{\mathcal{B}}$
induces a {}``quantum'' moment map $\mu:U_{h}(\mathfrak{g})\to D_{h}^{\lambda}$,
which has a restriction, which we shall also call $\mu:U_{h}(\mathfrak{m}_{l})\to D_{h}^{\lambda}$.
In the positive characteristic case, this map also satisfies some
additional structure, as in: 
\begin{defn}
(c.f. \cite{key-3} section 5) 

Suppose that $X$ is a symplectic variety over $k$, and let $O_{h}$
be a Frobenius-constant quantization of $X$%
\footnote{As noted before, our objects defined above actually satisfy the property
that their $h$-completions are quantizations. We shall work with
such completions below.%
}. Suppose further that there is an algebraic group action $H\times X\to X$,
and a morphism $\mu:U_{h}(\mathfrak{h})\to O_{h}$ (where $\mathfrak{h}=\mbox{Lie}(H)$),
satisfying $\mu(h)=h$. 

1) This quantization is said to be {}``Frobenius H-constant'' if
$H$ acts on the sheaf of algebras $O_{h}$ in a way that preserves
the subalgebra $O(X)^{(1)}[[h]]$, such that $H$ fixes the parameter
$h$, and such that the induced action of $H$ on $X^{(1)}$ agrees
with the base change of the action of $H$ on $X$. 

For $\xi\in\mathfrak{h}$, and $s\in O_{h}$ a local section, let
$\xi\cdot s$ denote the action (obtained by differentiating the action
of $H$). 

2) Given the set-up of 1), the map $\mu$ is said to be a quantum
moment map if the restriction of $\mu$ to $U_{h}(\mathfrak{h})^{(1)}=U(\mathfrak{h})^{(1)}[[h]]$
lands in $O(X)^{(1)}[[h]]$, and we have the {}``action'' relation:
for all $\xi\in\mathfrak{h}$, and all local sections $s\in O_{h}$,
$\mu(\xi)s-s\mu(\xi)=h\xi\cdot s$. 
\end{defn}
The last part of definition 2 is the standard definition of a quantum
moment map in any characteristic. The fact that (the completion of)
our map $\mu$ is a quantum moment map follows immediately from the
definition of $\mu$- the algebra $D_{h}^{\lambda}$ was constructed
out of the enveloping algebra. Further, the $p^{th}$ powers on both
sides clearly coincide. 

Given this, we can state the version of Hamiltonian reduction which
we will use, again following \cite{key-3} . In the rest of this section,
we shall work with the $h-$completed algebras $U_{h}(\mathfrak{m}_{l})(0)$
and $D_{h}(\lambda)(0)$ (we use this notation to match our characteristic
zero notation from \cite{key-11}). 

First, we consider the ideal $I_{\chi}\subseteq U_{h}(\mathfrak{m}_{l})(0)$
defined as the ideal generated by $\{m-\chi(m)|m\in\mathfrak{m}_{l}\}$.
Note that this ideal is \emph{not} homogeneous with respect to the
usual grading on $\mbox{Rees}(U_{h})\tilde{=}U_{h}(0)$. 

Next, we define the sheaf of algebras $D_{h}(\lambda)(0)/<\mu(I_{\chi})>$.
This is naturally a sheaf over the scheme $\mu^{-1}(\chi)^{(1)}\times\mbox{Spec}(k[[h]])$.
To see why, note that the central subalgebra $\mbox{Sym}(\mathfrak{m}_{l}^{(1)})[[h]]$
has an ideal generated by the point $\chi^{(1)}\in\mathfrak{m}_{l}^{*}$.
Let $I_{\chi}^{(1)}$ be the ideal in $U_{h}(\mathfrak{m}_{l})(0)$
generated by this central ideal. From the definition of moment map
above, we see that $\mu(I_{\chi}^{(1)})$ defines the subscheme $\mu^{-1}(\chi)^{(1)}$
inside $T^{*}\mathfrak{\mathcal{B}}^{(1)}$. Further, $I_{\chi}^{(1)}=I_{\chi}\cap\mbox{Sym}(\mathfrak{m}_{l}^{(1)})[[h]]$
(recall that we are assuming that $m^{[p]}=0$ for all $m\in\mathfrak{m}_{l}$). 

Finally, we look at the pushforward $p_{*}(D_{h}(\lambda)(0)/<\mu(I_{\chi})>)^{M_{l}}$.
Then this is naturally a sheaf on the scheme $\tilde{S}_{\mathcal{N}}^{(1)}\times\mbox{Spec}(k[[h]])$.
This is our Hamiltonian reduction $D_{h}(\lambda,\chi)(0)$. 

A slight modification gives the case of the sheaf $D_{h,\mathfrak{h}}$:
we note that the moment map works the same way, and the group $M_{l}$
acts on $\tilde{\mathfrak{g}^{*}}^{(1)}$. Then after Hamiltonian
reduction, we end up with a sheaf $D_{h,\mathfrak{h}}(\chi)(0)$ on
the scheme $\tilde{S}^{(1)}\times_{\mathfrak{h}^{*,(1)}}\mathfrak{h}^{*}\times\mbox{Spec}(k[[h]])$
(where $\tilde{S}^{(1)}\to\mathfrak{h}^{*,(1)}$ is the restriction
of the map $\tilde{\mathfrak{g}^{*}}^{(1)}\to\mathfrak{h}^{*,(1)}$). 
\begin{rem}
Below, we will see how to get rid of the parameter $h$ and work with
a sheaf simply defined on the scheme $\tilde{S}_{\mathcal{N}}^{(1)}$.
It would have been possible to do things in the opposite order, i.e.,
get rid of the $h$ on $T^{*}\mathfrak{\mathcal{B}}$ by working with
the crystalline differential operators $D^{\lambda}$, and then take
Hamiltonian reduction of this sheaf to get a sheaf on $\tilde{S}_{\mathcal{N}}^{(1)}$.
This is the approach used for Hilbert schemes in \cite{key-4}, and
it is probably the most natural approach in positive characteristic.
However, this approach doesn't really have an analogue in characteristic
zero, because without including the parameter $h$, the sheaf $D^{\lambda}$
isn't local on $T^{*}\mathfrak{\mathcal{B}}$, and of course we can't
pull back to the frobenius twist because it doesn't exist. Since the
main theorem of this paper involves comparing a construction in characteristic
zero and positive characteristic, we have to include the {}``unnatural''
construction in positive characteristic as well. 
\end{rem}

\subsection{Azumaya Property}

In this section, we use the results of \cite{key-3} (whose set-up
we borrowed in the previous section) to show that the algebra we have
obtained by Hamiltonian reduction is a reasonable object. This will
be the key point in showing that the {}``localization theorem''
holds in our context. 

So, let us quote the results from \cite{key-3} in the form that we
need: 
\begin{prop}
(c.f. \cite{key-3}, proposition 5.8) The sheaf $D_{h}(\lambda,\chi)(0)$
is a Frobenius constant quantization of the variety $\tilde{S}_{\mathcal{N}}$.
Thus it can also be thought of as a coherent sheaf on the scheme $\tilde{S}_{\mathcal{N}}^{(1)}\times\mbox{Spec}(k[[h]])$.
The same is true of $D_{h,\mathfrak{h}}(\chi)(0)$ on $\tilde{S}^{(1)}\times_{\mathfrak{h}^{*,(1)}}\mathfrak{h}^{*}\times\mbox{Spec}(k[[h]])$. 
\end{prop}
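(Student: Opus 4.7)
The plan is to reduce the proposition to a direct application of Proposition 5.8 of \cite{key-3}, which asserts that under appropriate hypotheses the Hamiltonian reduction of a Frobenius-constant quantization by a quantum moment map inherits the Frobenius-constant structure. Our job, then, is to assemble the hypotheses and verify them for the setup of the previous subsection.

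First I would record the input. By the construction recalled in section 4, the sheaf $D_{h}^{\lambda}(0)$ is a Frobenius-constant quantization of $T^{*}\mathcal{B}$, with Frobenius morphism $F:O(T^{*}\mathcal{B}^{(1)}\times\mathbb{A}^{1})\to D_{h}^{\lambda}$ constructed via $p^{th}$-iterates of vector fields together with $h\mapsto h$. The subgroup $M_{l}\subset G$ acts on $T^{*}\mathcal{B}$, this action lifts to $D_{h}^{\lambda}(0)$ (by differentiating the $G$-action on $D_{h,\mathfrak{h}}$), and preserves the Frobenius subalgebra; this gives the Frobenius $M_{l}$-constant structure in the sense of the earlier definition. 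The restriction $\mu:U_{h}(\mathfrak{m}_{l})(0)\to D_{h}^{\lambda}(0)$ of the $G$-moment map is a quantum moment map: the action relation $\mu(\xi)s-s\mu(\xi)=h\,\xi\cdot s$ is inherited from the corresponding relation for $\mathfrak{g}$, and the image of $U_{h}(\mathfrak{m}_{l})^{(1)}$ under $\mu$ lands in $O(T^{*}\mathcal{B})^{(1)}[[h]]$ because the assumption $\mathfrak{m}_{l}^{[p]}=0$ implies that $m^{p}$ coincides with the Frobenius image of $m\in\mathfrak{m}_{l}$ in $D_{h}^{\lambda}(0)$.

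Second, I would verify the geometric input needed for the reduction to give $\tilde{S}_{\mathcal{N}}$. Classically (over $\mathbb{C}$, cf. \cite{key-13,key-11}) one knows that $M_{l}$ acts freely on $\mu^{-1}(\chi)\subset T^{*}\mathcal{B}$ with smooth quotient $\tilde{S}_{\mathcal{N}}$, and that the moment map there is flat. For $\mathrm{char}(k)$ large enough these facts carry over to $k$ by standard spreading-out arguments. Combined with the Frobenius $M_{l}$-constant property, this ensures that the sub-ideal $\langle\mu(I_{\chi})\rangle$ restricted to the central copy $O(T^{*}\mathcal{B})^{(1)}[[h]]$ cuts out exactly $\mu^{-1}(\chi)^{(1)}$, and that passing to $M_{l}$-invariants of the push-forward realizes $D_{h}(\lambda,\chi)(0)$ as an $h$-flat coherent sheaf on $\tilde{S}_{\mathcal{N}}^{(1)}\times\mathrm{Spec}(k[[h]])$ whose reduction mod $h$ is $O(\tilde{S}_{\mathcal{N}})$, i.e.\ a quantization of $\tilde{S}_{\mathcal{N}}$. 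Moreover, the Frobenius morphism $F$ descends: $F$ carries $O(\mu^{-1}(\chi)^{(1)})$ into the center of $D_{h}^{\lambda}(0)/\langle\mu(I_{\chi})\rangle$, and taking $M_{l}$-invariants produces the required lift $F:O(\tilde{S}_{\mathcal{N}}^{(1)}\times\mathbb{A}^{1})\to Z(D_{h}(\lambda,\chi)(0))$. This is precisely the content of Proposition 5.8 of \cite{key-3}, and completes the first claim.

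For the extended statement about $D_{h,\mathfrak{h}}(\chi)(0)$, the argument is identical, with $T^{*}\mathcal{B}$ replaced by $\tilde{\mathfrak{g}^{*}}$ and the Frobenius morphism replaced by the enlarged morphism $\tilde{\mathfrak{g}^{*}}^{(1)}\times_{\mathfrak{h}^{*,(1)}}\mathfrak{h}^{*}\to D_{h,\mathfrak{h}}$ from section 4.2; the $M_{l}$-action is trivial on the second factor, so Hamiltonian reduction produces a sheaf on $\tilde{S}^{(1)}\times_{\mathfrak{h}^{*,(1)}}\mathfrak{h}^{*}\times\mathrm{Spec}(k[[h]])$. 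The main obstacle, as I see it, is making sure that the characteristic-zero transversality statements about $\mu^{-1}(\chi)\to\tilde{S}_{\mathcal{N}}$ being an $M_{l}$-torsor genuinely transfer to $\mathrm{char}(k)\gg 0$; once that is in place, the remaining verifications are formal consequences of the definitions and of \cite{key-3}.
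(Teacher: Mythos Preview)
Your proposal is correct and follows exactly the paper's approach: the paper does not give a separate proof of this proposition but simply quotes it as an instance of \cite{key-3}, Proposition 5.8, after having verified in the preceding subsection that $D_{h}^{\lambda}(0)$ is Frobenius $M_{l}$-constant and that $\mu$ is a quantum moment map in the sense required there. Your write-up makes the hypothesis verification somewhat more explicit (freeness of the $M_{l}$-action on $\mu^{-1}(\chi)$, descent of the Frobenius map), but this is elaboration rather than a different argument.
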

and 
\begin{prop}
\label{prop:Azumaya}(c.f. \cite{key-3}, proposition 3.8) Let $O_{h}$
be a frobenius constant quantization of a variety $X$, and let $x$
be a closed point of $X^{(1)}$. Then, regarding $O_{h}(h^{-1})$
as a coherent sheaf of algebras on $X^{(1)}\times\mbox{Spec}(k((h)))$,
the local algebra $O_{h}(h^{-1})_{x}$ is Azumaya over $k((h))$. 
\end{prop}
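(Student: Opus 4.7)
The plan is to establish the Azumaya property locally by reducing to a standard quantization model --- the Weyl algebra in characteristic $p$ --- where the Azumaya property is a direct computation. The approach has three main components.

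First, I would use the Frobenius constant structure to view $O_{h}$ as a coherent sheaf of algebras on $X^{(1)}\times\operatorname{Spec}(k[[h]])$ and verify that it is locally free of rank $p^{\dim X}$ over the structure sheaf. Modulo $h$, the algebra $O_{X}$ is locally free of rank $p^{\dim X}$ over $O_{X^{(1)}}$ via the Frobenius map (a standard consequence of smoothness in characteristic $p$), and Nakayama's lemma lifts local freeness to $O_{h}$ over $O_{X^{(1)}}[[h]]$. After inverting $h$, this yields the expected rank $(p^{\dim X/2})^{2}$ for an Azumaya algebra of degree $p^{\dim X/2}$. Next, I would identify the center of $O_{h}(h^{-1})$ with $O_{X^{(1)}}\otimes k((h))$: one inclusion is part of the Frobenius constant datum, and for the reverse direction one inducts along the $h$-adic filtration, using the non-degeneracy of the symplectic Poisson bracket to show that the leading term of any central element must lie in the Frobenius subalgebra $O_{X^{(1)}}$.

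The heart of the argument is a local model theorem. I would pass to the formal completion at $x$, apply classical Darboux to trivialize the symplectic structure on the formal disk, and then invoke a quantized Darboux / rigidity result to identify $O_{h}$ on this neighborhood with the $h$-adic completion of the standard Weyl algebra
\[
k[[h]]\langle p_{1},\ldots,p_{n},q_{1},\ldots,q_{n}\rangle\;/\;([p_{i},q_{j}]=h\delta_{ij}).
\]
For this explicit algebra the Azumaya property after inverting $h$ is classical: the center is freely generated by $p_{i}^{p}$, $q_{j}^{p}$ and $h$, the algebra is free of rank $p^{\dim X}$ over its center, and simplicity after inverting $h$ is verified by direct inspection of the commutation relations. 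Patching these local Azumaya structures gives the desired conclusion.

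The main obstacle is the quantized Darboux theorem. Pure formal quantizations of a symplectic disk admit nontrivial deformations, so one must use the extra rigidity coming from the Frobenius constant structure --- specifically the lift $F:O_{X^{(1)}}\to Z(O_{h})$ --- to force the local model to be unique up to isomorphism. This reduces to a concrete Hochschild / Fedosov-style cohomological vanishing specific to positive characteristic; once established, the remaining steps are essentially formal consequences of local freeness and the center computation.
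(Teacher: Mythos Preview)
The paper does not supply its own proof of this proposition; it is simply quoted from \cite{key-3} (Bezrukavnikov--Kaledin), Proposition~3.8, and used as a black box. Your outline is essentially the argument given in that reference: one checks local freeness of rank $p^{\dim X}$ over $O_{X^{(1)}}[[h]]$ by reducing modulo $h$ and applying Nakayama, identifies the center of $O_{h}(h^{-1})$ with $O_{X^{(1)}}\otimes k((h))$ via the non-degeneracy of the symplectic Poisson bracket, and then reduces the Azumaya condition to the standard Weyl algebra by passing to a formal local model. The local normal form step (your ``quantized Darboux'') is indeed where the real content lies; in \cite{key-3} it is extracted from their classification of Frobenius-constant quantizations, and the Frobenius lift $F:O_{X^{(1)}}\to Z(O_{h})$ supplies precisely the rigidity you point to as needed to kill the extraneous formal deformations. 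So your plan is sound and tracks the cited source; since the present paper only quotes the result, there is no independent argument here to compare against.
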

From these results we derive immediately an Azumaya property for the
sheaf $D_{h}(\lambda,\chi)(0)$. A similar argument shows the Azumaya
property of the sheaf $D_{h,\mathfrak{h}}(\chi)(0)$ (c.f. \cite{key-5}
section 2.3). However, we wish to obtain an even stronger property
by getting rid of the parameter $h$. To do that, we shall invoke
some facts about $\mathbb{G}_{m}$-equivariant quantizations. 

Before doing so, let us note that all of the varieties in the above
section carry the Gan-Ginzburg $\mathbb{G}_{m}$ action on $T^{*}\mathfrak{\mathcal{B}}$,
which we recall is given by 
\[
t(g,v)=(\gamma(t)g,\bar{\rho(t)}v)
\]
 where $\gamma:\mathbb{C}^{*}\to G$ was the natural embedding described
above in section 2, and where we've identified the cotangent space
at the point $g$ with $(\mathfrak{g}/\mathfrak{b}_{0})^{*}\tilde{=}\mathfrak{n}_{0}$,
where $\mathfrak{b}_{0}$ is our standard Borel subalgebra, (which
we choose to contain the {}``positive part'' of our $\mathfrak{sl}_{2}$-triple,
$e$ and $h$), and $\mathfrak{n}_{0}$ is its nilradical, and $\bar{\rho}(t)=t^{-2}ad(\gamma(t))$
as above. We also let $\mathbb{G}_{m}$ act on $\mathfrak{h}^{*}$
as $t\cdot h=t^{2}h$. %
\footnote{This action exists as long as $\mbox{char}k$ is sufficiently large,
c.f \cite{key-16}. We shall assume that $\mbox{char}k$ is large
enough in the rest of the paper.%
} 

We can see also that all of the sheaves considered above are equivariant
with respect to this action: note that the action extends by definition
to $\mathfrak{\mathcal{B}}$, and then to $T^{*}\mathfrak{\mathcal{B}}$
and $D_{h}^{\lambda}$ by the usual extension of an action to differential
operators (as usual, we demand $t\cdot h=t^{2}h$ to make the relations
of $D_{h}^{\lambda}$ homogeneous). In addition, the action preserves
the varieties $S_{\mathcal{N}}$ and $\tilde{S}_{\mathcal{N}}$, and
is respected by the moment map, by its definition. The ideal of Hamiltonian
reduction, which was inhomogeneous with respect to the usual grading,
is homogeneous with respect to this one. Thus we see that $D_{h}(\lambda,\chi)(0)$
carries this action as well, and the same for $D_{h,\mathfrak{h}^{*}}(\chi)(0)$. 

Although this action is poorly behaved on $T^{*}\mathfrak{\mathcal{B}}$,
it is positive weight and contracting on $\tilde{S}_{\mathcal{N}}$.
So at this point we can invoke a very general lemma, which is similar
to \cite{key-3}, lemma 3.4, and \cite{key-18}, proposition 2.1.5: 
\begin{lem}
1) Let $k$ be any algebraically closed field, and let $X$ be smooth
a variety over $k$, with a quantization $O_{h}$. Suppose that $X$
is equipped with a positive weight $\mathbb{G}_{m}(k)$-action, which
extends to an action of $O_{h}$ via $t\cdot h=t^{n}h$ for some $n>0$.
Then the sheaf $O_{h}$ on $X\times\mbox{Spec}(k[[h]])$ is the restriction
of a sheaf on the variety $X\times\mbox{Spec}(k[h])$. 

2) Now let $char(k)>0$ and suppose that $O_{h}$ is frobenius constant,
and, in addition, that $Fr:O(X^{(1)})\to O_{h}$ is $\mathbb{G}_{m}$-equivariant
(where we use the induced action on the Frobenius twist of a variety).
Then we can say in addition that the coherent sheaf $O_{h}$ on $X^{(1)}\times\mbox{Spec}(k[[h]])$
is the restriction of a coherent sheaf on $X^{(1)}\times\mbox{Spec}(k[h])$. \end{lem}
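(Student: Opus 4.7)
The strategy is to exploit the positive-weight $\mathbb{G}_m$-action on $X$ together with the positive weight $n$ of $h$ to isolate an algebraic $k[h]$-subsheaf $O_h^{\mathrm{alg}} \subset O_h$ whose $h$-adic completion recovers $O_h$. For part 1, I would work locally on a $\mathbb{G}_m$-stable affine open $U \subset X$, and define
\[
O_h^{\mathrm{alg}}(U) := \bigoplus_{w \in \mathbb{Z}} O_h(U)^{(w)},
\]
the direct sum of weight-$w$ eigenspaces under the $\mathbb{G}_m$-action. Since $t \cdot h = t^n h$ with $n > 0$, multiplication by $h$ shifts weight by $n$, so $O_h^{\mathrm{alg}}(U)$ is a graded $k[h]$-subalgebra with $O_h^{\mathrm{alg}}(U)/h\,O_h^{\mathrm{alg}}(U) \cong O_X(U)$. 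The content of the lemma is that the $h$-adic completion of $O_h^{\mathrm{alg}}(U)$ equals $O_h(U)$, and glueing this over a $\mathbb{G}_m$-stable affine cover gives the desired sheaf on $X \times \mathrm{Spec}(k[h])$.

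The density step combines the two positivity assumptions. The positive-weight action on $X$ makes $O_X(U)$ an $\mathbb{N}$-graded algebra with finite-dimensional weight pieces; consequently each truncation $O_h(U)/h^k\,O_h(U)$, built as an iterated extension of $O_X(U)$ by itself, is a locally finite rational $\mathbb{G}_m$-representation, and so admits a genuine weight decomposition. This produces, for every $s \in O_h(U)$, a compatible family of weight-$w$ components defining $s^{(w)} \in O_h(U)$. The critical finiteness is then an elementary degree count: the $h^j$-coefficient of $s^{(w)}$ lies in the weight-$(w - jn)$ part of $O_X(U)$, which vanishes as soon as $j > w/n$ by non-negativity of weights on $X$. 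Hence each $s^{(w)}$ is actually a polynomial in $h$ of degree at most $\lfloor w/n \rfloor$, and therefore lies in $O_h^{\mathrm{alg}}(U)$. The expansion $s = \sum_w s^{(w)}$ converges $h$-adically to $s$, proving density.

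For part 2, the Frobenius-constant structure provides a $\mathbb{G}_m$-equivariant map $\mathrm{Fr}\colon O(X^{(1)}) \to Z(O_h)$ that turns $O_h$ into a (locally free) coherent sheaf on $X^{(1)} \times \mathrm{Spec}(k[[h]])$. Running the argument of part 1 against this structure, the subsheaf $O_h^{\mathrm{alg}}$ is automatically a module over $O(X^{(1)}) \otimes_k k[h]$, because the equivariance of $\mathrm{Fr}$ forces its image into the algebraic weight decomposition of $O_h$. Coherence of $O_h^{\mathrm{alg}}$ as a sheaf on $X^{(1)} \times \mathrm{Spec}(k[h])$ then follows from coherence of $O_h$ on $X^{(1)} \times \mathrm{Spec}(k[[h]])$ by choosing local generators in bounded weight and re-applying the polynomial-in-$h$ argument of part 1 to their coefficients.

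\emph{The main obstacle} is the very first step of the density argument: verifying that the $\mathbb{G}_m$-action on each $O_h(U)/h^k\,O_h(U)$ is sufficiently rational to admit a true weight decomposition, rather than only a completed/pro-algebraic one. In the intended applications to $D_h(\lambda,\chi)(0)$ and $D_{h,\mathfrak{h}}(\chi)(0)$ this is automatic, since the $\mathbb{G}_m$-action is inherited from the Kazhdan action on the enveloping algebras used in the construction; but formulating it cleanly in the abstract setting of the lemma — so that the decomposition is preserved under Hamiltonian reduction and under passage to the frobenius twist — requires a careful reading of what ``$\mathbb{G}_m$-equivariant quantization'' means at the level of sheaves of topological algebras. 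Once this is granted, the remaining degree count and the glueing/coherence checks are routine.
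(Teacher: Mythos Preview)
Your proposal is correct and follows essentially the same route as the paper: define the algebraic subsheaf as the sum of $\mathbb{G}_m$-weight spaces (what the paper calls $O_{h,\mathbb{G}_m\text{-fin}}$), use the non-negativity of weights on $O_X$ together with $t\cdot h=t^n h$ to force each weight-$w$ eigenvector to be polynomial in $h$ of bounded degree, and then glue over a $\mathbb{G}_m$-equivariant affine cover; for part~2, equivariance of $Fr$ lands $O(X^{(1)})[h]$ inside this finite part, which is exactly the paper's argument.

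One small simplification: you route the extraction of weight components through the truncations $O_h/h^k O_h$ and worry about their rationality, but this detour is unnecessary.  The paper works directly with eigenvectors in $O_h(U)\cong A[[h]]$: a formal series $\sum_j a_j h^j$ is a $\mathbb{G}_m$-eigenvector of weight $w$ iff each $a_j$ has weight $w-jn$ in $A$, and positivity of weights on $A$ forces $a_j=0$ for $j>w/n$.  So the identification $A[h]=A[[h]]_{\mathbb{G}_m\text{-fin}}$ is immediate, and the ``obstacle'' you flag dissolves once one reads the hypothesis ``the action extends to $O_h$'' as asserting an honest algebraic $\mathbb{G}_m$-action on the $k[[h]]$-algebra, as the paper does.  (Your side remark that the weight pieces of $O_X(U)$ are finite-dimensional is not needed and not asserted in the lemma; only non-negativity of weights is used.)
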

\begin{proof}
To prove 1), we first consider the case that $X$ is affine, following
(\cite{key-18}, prop. 2.1.5). In this case, let $X=\mbox{Spec}(A)$.
As a $k[[h]]$-module, we have that $O_{h}\tilde{=}A[[h]]$. We first
claim that $A[h]=A[[h]]_{\mathbb{G}_{m}-fin}$ (where $(V)_{\mathbb{G}_{m}-fin}$
denotes the sum of the finite dimensional modules of the $\mathbb{G}_{m}$-module
$V$). This simply follows from the obvious fact that every eigenvalue
for the $\mathbb{G}_{m}$-action on $A[[h]]$ is a finite sum of terms
of the form $h^{k}a$ where $a$ is an eigenvalue for the $\mathbb{G}_{m}$-action
on $A$ (here we use that the action is positive weight, so that the
number of $h$'s must be bounded). 

Now, the claim implies, since $\mathbb{G}_{m}$ acts on $A[[h]]$
by algebra automorphisms, that $A[h]$ is a subalgebra of $A[[h]]$.
So this is 1) for affine $X$. In general, we note that taking $\mathbb{G}_{m}$-finite
vectors clearly commutes with localization by a $\mathbb{G}_{m}$-stable
element of $A$, and that any smooth variety with a $\mathbb{G}_{m}$-action
has an affine $\mathbb{G}_{m}$-equivariant cover (c.f., GIT, section
). So we can take the sheaf of local sections of $\mathbb{G}_{m}$-finite
vectors, and this suffices for 1). 

To get 2), we note that the image of $O(X^{(1)})$ clearly lies in
$O_{h,\mathbb{G}_{m}-fin}$ by the assumption. Therefore the extension
to $Fr:O(X^{(1)})[h]\to O_{h}$ obtained by sending $h$ to $h$ has
image in $O_{h,\mathbb{G}_{m}-fin}$ as well. But this is exactly
2). 
\end{proof}
With this in hand, we see right away that in fact $D_{h}(\lambda,\chi)(0)$
is the restriction of a sheaf, called $D_{h}^{\lambda,\chi}$, on
$\tilde{S}_{\mathcal{N}}^{(1)}\times\mathbb{A}^{1}$, and by the same
reasoning, that $D_{h,\mathfrak{h}}(\chi)(0)$ is the restriction
of a sheaf on $\tilde{S}^{(1)}\times_{\mathfrak{h}^{*,(1)}}\mathfrak{h}^{*}\times\mathbb{A}^{1}$.
By \cite{key-3} lemma 3.4, these sheaves are even the unique ones
with this property. Further, we are now free to take the quotient
$D_{h}^{\lambda,\chi}/(h-1)$, (respectively $D_{h,\mathfrak{h}}(\chi)(0)/(h-1)$)
and obtain coherent sheaves on the variety $\tilde{S}_{\mathcal{N}}^{(1)}$
(respectively $\tilde{S}^{(1)}\times_{\mathfrak{h}^{*,(1)}}\mathfrak{h}^{*}$),
which we will call $D^{\lambda,\chi}$ (respectively $\tilde{D}(\chi)$,
following \cite{key-5}). We can now state the main result about these
objects 
\begin{prop}
$D^{\lambda,\chi}$, respectively $\tilde{D}(\chi)$, is an Azumaya
algebra on the variety $\tilde{S}_{\mathcal{N}}^{(1)}$, respectively
$\tilde{S}^{(1)}\times_{\mathfrak{h}^{*,(1)}}\mathfrak{h}^{*}$. \end{prop}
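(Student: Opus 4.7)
The strategy is to lift the formal Azumaya property provided by Proposition~\ref{prop:Azumaya} back to the algebraic level by exploiting the $\mathbb{G}_m$-equivariant structure established in the preceding Lemma. By that Lemma, $D^{\lambda,\chi}$ and $\tilde{D}(\chi)$ are the fibers at $h=1$ of the $\mathbb{G}_m$-equivariant algebraic sheaves $D_h^{\lambda,\chi}$ on $\tilde{S}_{\mathcal{N}}^{(1)}\times\mathbb{A}^1$ and $D_{h,\mathfrak{h}}(\chi)$ on $\tilde{S}^{(1)}\times_{\mathfrak{h}^{*,(1)}}\mathfrak{h}^{*}\times\mathbb{A}^1$. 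Since $h=1$ lies in the open locus $\{h\neq 0\}$, it suffices to prove that these algebraic sheaves are Azumaya on their respective open loci where $h$ is invertible; I describe the argument for $A := D_h^{\lambda,\chi}$, as the case of $\tilde{D}(\chi)$ is entirely parallel.

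The Frobenius-constant-quantization structure ensures $A$ is locally free of the expected rank, so the Azumaya property on $\{h\neq 0\}$ reduces to the vanishing, on $\tilde{S}_{\mathcal{N}}^{(1)}\times\mathbb{G}_m$, of the cokernel $C:=\mbox{coker}(\mu)$ of the multiplication map $\mu:A\otimes A^{op}\to\mbox{End}(A)$. Applying Proposition~\ref{prop:Azumaya} to the $h$-adic completion $D_h(\lambda,\chi)(0)$ and using that $h$-adic completion is flat (so it commutes with taking cokernels), we deduce that the $h$-adic completion $\widehat{C}$ of $C$ vanishes upon inverting $h$; equivalently, $\widehat{C}$ is $h$-torsion.

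The decisive step is to upgrade this to the statement that $C$ itself is $h$-torsion. Here the $\mathbb{G}_m$-equivariance enters in an essential way: the contracting $\mathbb{G}_m$-action on $\tilde{S}_{\mathcal{N}}^{(1)}$ together with the positive weight on $h$ forces the $\mathbb{G}_m$-weights on local sections of $A$, and hence of $C$, to be bounded below. For a coherent $\mathbb{G}_m$-equivariant sheaf with bounded-below weights, the natural comparison map $C\to\widehat{C}$ is injective, since $h$-adic completion merely enlarges $C$ by allowing infinite series in the positive-weight direction generated by $h$ and cannot annihilate any weight-bounded element. Combined with $\widehat{C}$ being $h$-torsion, this yields $C$ itself $h$-torsion; being coherent, it is therefore set-theoretically supported on $\{h=0\}$, and hence $C|_{\tilde{S}_{\mathcal{N}}^{(1)}\times\mathbb{G}_m}=0$, as required. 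Restricting to $h=1$ then produces the Azumaya property for $D^{\lambda,\chi}$, and the same argument on $\tilde{S}^{(1)}\times_{\mathfrak{h}^{*,(1)}}\mathfrak{h}^{*}\times\mathbb{A}^1$ (where the $\mathbb{G}_m$-action on the additional factor $\mathfrak{h}^{*}$ is again positive weight) handles $\tilde{D}(\chi)$. The main technical obstacle is precisely this passage from $\widehat{C}$ to $C$: without the bounded-below $\mathbb{G}_m$-grading, the implication fails in general (for instance, $C=k[h]/(h-1)$ has zero $h$-adic completion but is not $h$-torsion), so the $\mathbb{G}_m$-equivariance is not a cosmetic hypothesis but a load-bearing one.
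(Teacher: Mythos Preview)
Your argument is correct and follows a genuinely different route from the paper's. The paper works fiber-by-fiber via the central simple criterion: at a closed point $x\in\tilde{S}_{\mathcal{N}}^{(1)}$ it supposes the fiber $(D^{\lambda,\chi})_x/m_x$ admits a nontrivial two-sided ideal, lifts it to an ideal $I$ of the $k[h]$-algebra $(D_h^{\lambda,\chi})_x/m_x$, invokes the Frobenius identity $(h-1)^{p^k}=h^{p^k}-1$ to show that no power of $h$ lies in $I$, and then passes to the $h$-adic completion and inverts $h$ to contradict Proposition~\ref{prop:Azumaya}. You instead work globally with the cokernel $C$ of the multiplication map $A\otimes A^{op}\to\mbox{End}(A)$ and use the bounded-below $\mathbb{G}_m$-grading to transfer the formal vanishing $\widehat{C}[h^{-1}]=0$ back to the algebraic sheaf $C$. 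Your approach is characteristic-free in spirit and makes the load-bearing role of the $\mathbb{G}_m$-action completely explicit (indeed, the paper's ideal-lifting step is itself delicate precisely at the point where one must ensure the lift survives $h$-completion, which ultimately also hinges on a graded choice of lift); the paper's argument, on the other hand, is more elementary in that it never needs to analyse $\mbox{End}(A)$ or its grading. One point worth tightening in your write-up: Proposition~\ref{prop:Azumaya}, as paraphrased here, asserts central simplicity of the $k((h))$-fiber only at closed $k$-points $x\in X^{(1)}$, and these do not exhaust the closed points of $X^{(1)}_{k((h))}$, so the global vanishing $\widehat{C}[h^{-1}]=0$ does not follow immediately from the paraphrase; the original Bezrukavnikov--Kaledin statement is, however, global, and citing it directly would close this gap cleanly.
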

\begin{proof}
This will follow from proposition 9. To see how, let us note (for
the first statement, the second is similar) that it suffices to show
that, for any point $x\in\tilde{S}_{\mathcal{N}}^{(1)}$ with associated
ideal $m_{x}$ in $O(\tilde{S}_{\mathcal{N}}^{(1)})$, $(D^{\lambda,\chi})_{x}/m_{x}(D^{\lambda,\chi})_{x}$
is a central simple algebra (c.f. \cite{key-29} chapter 4; we already
know that these are locally free sheaves because they are frobenius
constant quantizations).

If $\bar{I}$ is a nontrivial ideal of this algebra, then we can lift
it to a nontrivial ideal $I$ of $(D_{h}^{\lambda,\chi})_{x}/m_{x}(D_{h}^{\lambda,\chi})_{x}$.
Since $h-1$ is an element of $\bar{I}$ (by definition of $D^{\lambda,\chi}$),
we see that no power of $h$ can be an element of $\bar{I}$; if it
were, then some $h^{p^{k}}$ would be in $\bar{I}$ ($p=char(k)$),
but since $(h-1)^{p^{k}}=h^{p^{k}}-1$ is in $\bar{I}$, this contradicts
non-triviality. 

Now, this means that the ideal $I$ can be extended to a nontrivial
ideal of 
\[
[D_{h}(\lambda,\chi)(0)(h^{-1})]_{x}/m_{x}
\]
by first completing and then inverting $h$. But now this is a $k((h))$-central
simple algebra by \prettyref{prop:Azumaya}, which is a contradiction. 
\end{proof}

\subsection{Localization Theorem}

With the background of the previous sections, we can now give a proof
of the localization property for the sheaf $D^{\lambda,\chi}$. Throughout
this section, we make the following assumption: the cohomology groups
$H^{i}(\tilde{S}_{\mathcal{N}},O_{\tilde{S}_{\mathcal{N}}})$ and
$H^{i}(\tilde{S},O_{\tilde{S}})$ vanish for $i>0$. This is true
in characteristic zero by the Grauert-Riemenschnieder vanishing theorem,
and hence it is true in sufficiently large positive characteristic.
We do not know an explicit bound. 

First, let us begin with a statement about global sections, to be
proved in section 6 below: 
\begin{prop}
\label{prop:Glob-Secs}We have isomorphisms of algebras: 
\[
\Gamma(\tilde{D}(\chi))\tilde{=}U(\mathfrak{g},e)\otimes_{O(\mathfrak{h}^{*})^{W}}O(\mathfrak{h}^{*})
\]
 and 
\[
\Gamma(D^{\lambda,\chi})\tilde{=}U^{\lambda}(\mathfrak{g},e)
\]
 where these algebras are Premet's modular $W$-algebras, defined
over an algebraically closed field of positive characteristic. The
tensor product in the first line makes sense because $O(\mathfrak{h}^{*})^{W}\tilde{=}Z_{HC}(U(\mathfrak{g}))$
is a central subalgebra of $U(\mathfrak{g},e)$. 
\end{prop}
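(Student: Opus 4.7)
The plan is to compute the global sections of the $h$-completed quantization $D_h(\lambda,\chi)(0)$ first, then descend to the $h=1$ specialization using the $\mathbb{G}_m$-equivariant structure established in the previous subsection. So I would proceed in three steps.

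First, I would reduce the computation to commuting global sections with Hamiltonian reduction. Recall the construction $D_h(\lambda,\chi)(0) = p_\ast (D_h(\lambda)(0)/\langle\mu(I_\chi)\rangle)^{M_l}$. The quantized analogue of the BMR global sections theorem (which is built in to the quantization $D_h^\lambda$ by construction, since $U_h(\mathfrak g)$ maps into $\Gamma(D_h^\lambda)$ and the equality can be checked on associated graded via Beilinson--Bernstein) gives $\Gamma(D_h(\lambda)(0)) = U_h^\lambda(\mathfrak g_k)(0)$. The key identities we want are
\[
\Gamma\bigl(D_h(\lambda,\chi)(0)\bigr) \;\cong\; \bigl(U_h^\lambda(\mathfrak g_k)(0)/\langle I_\chi\rangle\bigr)^{M_l},
\]
which, once established, reproduces the definition of Premet's modular W-algebra (quantized and $h$-completed). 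The analogous statement for $D_{h,\mathfrak h}(\chi)(0)$ gives the tensored-up version over $O(\mathfrak h^\ast)$.

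Second, I would establish the commutation of $\Gamma$ with Hamiltonian reduction using the vanishing hypothesis $H^i(\tilde S_{\mathcal N},O)=0$ for $i>0$. Since $D_h(\lambda,\chi)(0)$ is flat over $k[[h]]$ and its $h=0$ reduction is $O_{\tilde S_{\mathcal N}}$ (at least after $\mathbb G_m$-completion), a standard spectral-sequence argument shows $H^i(\tilde S_{\mathcal N}^{(1)}\times\mathrm{Spec}(k[[h]]),D_h(\lambda,\chi)(0))=0$ for $i>0$; this lets me apply $\Gamma$ termwise to the Koszul-type resolution implicit in modding out $\langle\mu(I_\chi)\rangle$ and to the resolution computing $M_l$-invariants (using that $M_l$ is unipotent and the ambient characteristic is very large, so the group cohomology of $M_l$ on the relevant modules vanishes in positive degree). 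This same vanishing machinery simultaneously handles the $\tilde S$-version.

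Third, I would pass from $h$-completed to the $h=1$ specialization. Since $D_h^{\lambda,\chi}$ is the unique $\mathbb G_m$-equivariant sheaf on $\tilde S_{\mathcal N}^{(1)}\times\mathbb A^1$ restricting to $D_h(\lambda,\chi)(0)$, and $\mathbb G_m$-finite vectors commute with $\Gamma$ (by the same argument used in the lemma preceding Proposition~\ref{prop:Azumaya}), I get $\Gamma(D_h^{\lambda,\chi})$ as the $\mathbb G_m$-finite part of the previous computation, which is the Rees algebra $\mathrm{Rees}(U^\lambda(\mathfrak g,e))$ with its Kazhdan filtration. Specializing $h\mapsto 1$ via the quotient by $(h-1)$ and invoking that $\Gamma$ is exact on these sheaves yields $\Gamma(D^{\lambda,\chi})\cong U^\lambda(\mathfrak g,e)$; the $\tilde D(\chi)$ statement is identical except that the $\mathfrak h^\ast$-direction is never killed, giving the base-change factor $O(\mathfrak h^\ast)$ over $O(\mathfrak h^\ast)^W \cong Z_{HC}$.

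The main obstacle will be Step 2: we only assume vanishing of $H^{>0}$ of the structure sheaf $O_{\tilde S_{\mathcal N}}$, but we need to deduce analogous vanishing for the noncommutative sheaf $D_h(\lambda,\chi)(0)$ and for the $M_l$-equivariant complex computing Hamiltonian reduction. This requires a careful filtered/graded argument (using the $h$-adic filtration and then a Koszul resolution for the $M_l$-moment map), together with a large-$p$ hypothesis guaranteeing that $M_l$-cohomology of coherent sheaves on $\mu^{-1}(\chi)^{(1)}$ vanishes in positive degree. Everything else then reduces to bookkeeping with $\mathbb G_m$-equivariance and Premet's definition of the modular W-algebra as Hamiltonian reduction of $U(\mathfrak g_k)$.
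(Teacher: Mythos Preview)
Your approach is genuinely different from the paper's, and there is a real gap hiding in Step~1.

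The paper does \emph{not} compute $\Gamma$ directly in characteristic $p$. Instead it works over a ring $A$ finitely generated over $\mathbb{Z}$: one first constructs $A$-lattices $D_{h,A}^{\lambda,\chi}$ and $U_{h,A}(\mathfrak g,e)$ inside the characteristic-zero objects, chooses $A$ large enough that the isomorphism $\Gamma(D_h^{\lambda}(A)/I_A)\cong U^{\lambda}(\mathfrak g_A)/I_A$ (which holds over $\mathbb{C}$) already holds over $A$, and then observes that both sides of the desired isomorphism are endomorphism algebras of the same module $D_h^{\lambda}(A)/I_A\cong U^{\lambda}(\mathfrak g_A)/I_A$. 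Base-changing to $k$ gives the proposition. The cohomology-vanishing hypothesis on $\tilde S_{\mathcal N}$ is used elsewhere (for $R\Gamma=\Gamma$ and for the associated-graded computation), but it is not what drives the identification with Premet's algebra.

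The gap in your argument is this: Premet's modular $W$-algebra $U(\mathfrak g,e)_k$ is \emph{defined} in the paper (following \cite{key-22}) as the base change to $k$ of an $A$-lattice in the characteristic-zero $W$-algebra, not as the naive Hamiltonian reduction $(U_h^{\lambda}(\mathfrak g_k)/\langle I_\chi\rangle)^{M_l}$ over $k$. So even if your Steps~2--3 succeed and you compute $\Gamma(D^{\lambda,\chi})$ as this naive reduction, you have not yet matched it with $U^{\lambda}(\mathfrak g,e)_k$. That identification is exactly the content of the claim that the lattice algebra $U_{h,A}(\mathfrak g,e)$ agrees with the naive reduction $U_h(\mathfrak g,e)_A$ and that this persists after base change---which is precisely the reduction-mod-$p$ machinery you were trying to avoid. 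A secondary technical point: in Step~2 the Koszul resolution for $\langle\mu(I_\chi)\rangle$ lives on $T^*\mathcal B^{(1)}$, so the vanishing you need there is $H^{>0}(T^*\mathcal B,D_h^{\lambda})=0$ (which follows from BMR), not the vanishing on $\tilde S_{\mathcal N}$ that you invoke.
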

The reason we need to delay the proof is that these algebras are actually
defined in terms of reduction mod $p$ of the analogous characteristic
zero objects. Thus we have to use the discussion of the reduction
procedure in the next section. Although we do use this proposition
later in this section, the results of the next section are completely
independent of this one. 

In addition, without describing the multiplication on these algebras
explicitly, we can give the following consequences of this definition: 
\begin{lem}
The algebras $\Gamma(\tilde{D}(\chi))$ and \textup{$\Gamma(D^{\lambda,\chi})$}
carry natural filtrations, and we have that $\mbox{gr}\Gamma(\tilde{D}(\chi)))=O(S)\otimes_{S(\mathfrak{h})^{W}}S(\mathfrak{h})$
and $\mbox{gr}(\Gamma(D^{\lambda,\chi}))=O(S_{\mathcal{N}})$. Further,
we have that $\Gamma(\tilde{D}(\chi))\tilde{=}R\Gamma(\tilde{D}(\chi))$
and that $\Gamma(D^{\lambda,\chi})\tilde{=}R\Gamma(D^{\lambda,\chi})$. \end{lem}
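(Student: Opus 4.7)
The approach is to work with the $\mathbb{G}_m$-equivariant Rees sheaves $D_h^{\lambda,\chi}$ and $\tilde{D}_h(\chi)$ constructed in the previous subsection. Here $h$ has positive $\mathbb{G}_m$-weight, specialization at $h=1$ recovers $D^{\lambda,\chi}$ and $\tilde{D}(\chi)$, and specialization at $h=0$ recovers the Frobenius pushforward $F_*O_{\tilde{S}_{\mathcal{N}}}$ on $\tilde{S}_{\mathcal{N}}^{(1)}$ (respectively $F_*O$ on the fibre product), by the Frobenius-constant quantization structure. The natural filtration on $\Gamma(D^{\lambda,\chi})$ is then defined via the standard correspondence between positively graded $k[h]$-algebras with $h$ of positive weight and filtered $k$-algebras: $F_n\Gamma(D^{\lambda,\chi})$ is the image under $h\mapsto 1$ of the sum of $\mathbb{G}_m$-weight spaces of $\Gamma(D_h^{\lambda,\chi})$ of weight at most $n$, and under this construction the associated graded sheaf is canonically identified with $D_h^{\lambda,\chi}/hD_h^{\lambda,\chi}\tilde{=}F_*O_{\tilde{S}_{\mathcal{N}}}$.

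Given this setup, the identification of the candidate graded algebra is immediate: $\Gamma(\tilde{S}_{\mathcal{N}}^{(1)},F_*O_{\tilde{S}_{\mathcal{N}}})=\Gamma(\tilde{S}_{\mathcal{N}},O)=O(S_{\mathcal{N}})$, where the first equality holds because Frobenius is an affine morphism, and the second because $\mu:\tilde{S}_{\mathcal{N}}\to S_{\mathcal{N}}$ is a projective birational resolution of the affine variety $S_{\mathcal{N}}$. Moreover, the standing hypothesis $H^i(\tilde{S}_{\mathcal{N}},O)=0$ for $i>0$ forces the same vanishing for $F_*O_{\tilde{S}_{\mathcal{N}}}$ on $\tilde{S}_{\mathcal{N}}^{(1)}$; and since the Gan-Ginzburg $\mathbb{G}_m$-action is positive-weight and contracts $\tilde{S}_{\mathcal{N}}$ to $\mathcal{B}_{\chi}$, the decomposition of this sheaf into finite-dimensional weight components preserves the vanishing, so each graded piece $\mbox{gr}_nD^{\lambda,\chi}$ has vanishing higher cohomology on $\tilde{S}_{\mathcal{N}}^{(1)}$.

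To transfer this from the graded pieces to $D^{\lambda,\chi}$ itself, I would argue by induction along the filtration. The short exact sequences $0\to F_{n-1}D^{\lambda,\chi}\to F_nD^{\lambda,\chi}\to\mbox{gr}_nD^{\lambda,\chi}\to 0$ together with $H^i(\mbox{gr}_nD^{\lambda,\chi})=0$ give, via the long exact cohomology sequence, that $H^i(F_nD^{\lambda,\chi})=0$ for all $n$ and $i>0$, and simultaneously yield canonical isomorphisms $\mbox{gr}_n\Gamma(D^{\lambda,\chi})\tilde{=}\Gamma(\mbox{gr}_nD^{\lambda,\chi})$. Passing to the filtered colimit along the exhaustive filtration, and using that sheaf cohomology on the Noetherian scheme $\tilde{S}_{\mathcal{N}}^{(1)}$ commutes with filtered colimits, one obtains simultaneously $R\Gamma(D^{\lambda,\chi})\tilde{=}\Gamma(D^{\lambda,\chi})$ and $\mbox{gr}\Gamma(D^{\lambda,\chi})\tilde{=}O(S_{\mathcal{N}})$. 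The main delicate point is precisely this limiting step: one has to verify that the weight decomposition of $\Gamma(D_h^{\lambda,\chi})$ really does induce a locally finite, exhaustive filtration of $\Gamma(D^{\lambda,\chi})$ compatible with the sheaf-level filtration, and it is the positive-weight contraction of the $\mathbb{G}_m$-action that makes this book-keeping go through.

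The case of $\tilde{D}(\chi)$ is entirely parallel, carried out on $\tilde{S}^{(1)}\times_{\mathfrak{h}^{*,(1)}}\mathfrak{h}^*$ with the analogous hypothesis $H^i(\tilde{S},O)=0$. The only additional input is the identification $\Gamma(\tilde{S}^{(1)}\times_{\mathfrak{h}^{*,(1)}}\mathfrak{h}^*,O)\tilde{=}O(S)\otimes_{S(\mathfrak{h})^W}S(\mathfrak{h})$, which follows from the classical Grothendieck-Springer description $O(\tilde{S})\tilde{=}O(S)\otimes_{S(\mathfrak{h})^W}S(\mathfrak{h})$ combined with flat base change across the Artin-Schreier map $\mathfrak{h}^*\to\mathfrak{h}^{*(1)}$, noting that the composition $S(\mathfrak{h}^{(1)})\to S(\mathfrak{h})\to O(\tilde{S})$ factors through the Harish-Chandra subalgebra and matches the restriction of the Grothendieck map to the slice. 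With this identification the inductive argument of the previous paragraph applies verbatim.
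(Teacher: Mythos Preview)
Your argument is correct and follows essentially the same route as the paper: identify the associated graded sheaf with the structure sheaf of $\tilde{S}_{\mathcal{N}}$ (resp.\ $\tilde{S}$), invoke the standing vanishing hypothesis on $H^{i}(O_{\tilde{S}_{\mathcal{N}}})$, and then run the filtration inductively to obtain both $R\Gamma=\Gamma$ and the description of $\mbox{gr}\,\Gamma$. Your version is slightly more explicit about the Frobenius-pushforward bookkeeping and the colimit step, whereas the paper compresses these into the phrase ``standard spectral sequence argument,'' but the content is the same.
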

\begin{proof}
The fact that the algebras are filtered follows by taking global sections
of the natural filtrations on $\tilde{D}(\chi)$ and $D^{\lambda,\chi}$,
respectively (we recall here that these sheaves of algebras are defined
by taking a quantized algebra mod $h-1$; thus they carry filtrations).
By definition, we have that 
\[
\mbox{gr}(D^{\lambda,\chi})\tilde{=}O(S_{\mathcal{N}})
\]
and 
\[
\mbox{gr}(\tilde{D}(\chi))\tilde{=}O(\tilde{S})
\]

The cohomology vanishing assumption at the beginning of this section
now shows the cohomology vanishing for $D^{\lambda,\chi}$ and $\tilde{D}(\chi)$
by a standard spectral sequence argument. Further, we then see that
the sequences 
\[
0\to\Gamma(\tilde{S}_{\mathcal{N}},D_{i}^{\lambda,\chi})\to\Gamma(\tilde{S}_{\mathcal{N}},D_{i+1}^{\lambda,\chi})\to\Gamma(\tilde{S}_{\mathcal{N}},O(\tilde{S}_{\mathcal{N}})_{i+1})\to0
\]
 (and the analogous one for $\tilde{S}$) are exact for all $i$.
This shows the statements about the associated graded algebras. 
\end{proof}
Now we are almost in a position to state and prove the localization
theorems which are relevant to this paper. In particular, we shall
show that the alebras in question satisfy the assumptions 4.1, 4.2,
and 4.3 above. Given that, the argument of \cite{key-5} and \cite{key-3}
applies verbatim. 

The cohomology vanishing (assumption 4.1) is discussed above. 

The triviality of the canonical class (assumption 4.2) comes from
the fact that we are working with algebraic symplectic varieties. 

Thus, the only remaining obstacle is the issue of the algebras having
finite homological dimension. We shall get around this in the same
way as \cite{key-5}, section 3, whose notation and proofs we shall
follow very closely in the sequel. 

First we define the localization functor 
\[
\mathcal{L}:D^{b}(mod^{f.g.}(U(\mathfrak{g},e))\to D^{b}(mod^{coh}(\tilde{D}(\chi))
\]
 as $\mathcal{L}(M)=\tilde{D}(\chi)\otimes_{U(\mathfrak{g},e)}^{L}M$.
We note that the above proposition makes $U(\mathfrak{g},e)$ a subalgebra
of $\tilde{W}(\chi)$, and further that $U(\mathfrak{g},e)$ is a
filtered algebra whose associated graded is isomorphic to $O(S)$
(the coordinate ring of affine space). Thus this algebra has finite
homological dimension. Therefore this definition makes sense.

Next, we note that there is an action of $O(\mathfrak{h}^{*})$ on
the sheaf $\mathcal{L}(M)$ (via its action on $\tilde{D}(\chi)$),
while there is only an action of $O(\mathfrak{h}^{*})^{W}$ on $M$.
So, for $\lambda\in\mathfrak{h}^{*}$, we can define the category
$mod_{\lambda}^{f.g.}(U(\mathfrak{g},e))$ of modules such that the
algebra $O(\mathfrak{h}^{*})^{W}$ acts by a generalized central character
$\lambda$ (i.e., the image of $\lambda$ in $\mathfrak{h}^{*}/W$),
and there is a decomposition 
\[
\mathcal{L}(M)\tilde{=}\bigoplus_{\mu\in W\cdot\lambda}\mathcal{L}^{\mu\to\lambda}(M)
\]
 via the action of the generalized characters in $O(\mathfrak{h}^{*})$.
We wish to study the functor $\mathcal{L}^{\lambda\to\lambda}(M)$
when $\lambda$ is a regular element of $\mathfrak{h}^{*}$. 

If we define the category $mod_{\lambda}^{coh}(\tilde{D}(\chi))$
to be the full subcategory of $mod^{coh}(\tilde{D}(\chi))$ on objects
such that $O(\mathfrak{h}^{*})$ acts with generalized central character
$\lambda$, then we note that the image of $\mathcal{L}^{\lambda\to\lambda}$
lands in $D^{b}(mod_{\lambda}^{coh}(\tilde{D}(\chi)))$. We shall
denote this functor by 
\[
\mathcal{L}^{\hat{\lambda}}:D^{b}(mod_{\lambda}^{f.g.}(U(\mathfrak{g},e)))\to D^{b}(mod_{\lambda}^{coh}(\tilde{D}(\chi)))
\]
We note right off the bat that this functor takes makes sense on bounded
derived categories since it is defined as a summand of a functor which
does the same. 

Our other important functor will be the functor 
\[
\mathcal{L}^{\lambda}:D^{-}(mod^{f.g.}(U(\mathfrak{g},e)^{\lambda})\to D^{-}(mod^{coh}(D^{\lambda,\chi}))
\]
 defined as $\mathcal{L}^{\lambda}(M)=D^{\lambda,\chi}\otimes_{U(\mathfrak{g},e)^{\lambda}}^{L}M$.
Our aim is to show that in fact this functor has finite homological
dimension. This will be accomplished once we prove 
\begin{lem}
Suppose $\lambda$ is regular. Then we have a compatibility between
$\mathcal{L}^{\lambda}$ and $\mathcal{L}^{\hat{\lambda}}$; in other
words, if we consider the inclusions $i:D^{-}(mod^{f.g.}(U(\mathfrak{g},e)^{\lambda})\to D^{-}(mod_{\lambda}^{f.g.}(U(\mathfrak{g},e))$
and $\iota:D^{-}(mod^{coh}(D^{\lambda,\chi}))\to D^{-}(mod_{\lambda}^{coh}(\tilde{D}(\chi))$
then we have 
\[
\iota\mathcal{L}^{\lambda}\tilde{=}\mathcal{L}^{\hat{\lambda}}i
\]
 
\end{lem}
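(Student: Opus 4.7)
The strategy is to convert both functors into derived tensor products over the larger algebra $\tilde{W}(\chi) := U(\mathfrak{g},e) \otimes_{O(\mathfrak{h}^{*})^{W}} O(\mathfrak{h}^{*}) \cong \Gamma(\tilde{D}(\chi))$ (by Proposition~\ref{prop:Glob-Secs}), then use the regularity of $\lambda$ to produce an explicit $O(\mathfrak{h}^{*})$-eigenspace decomposition on the sheaf side. For $M \in mod^{f.g.}(U(\mathfrak{g},e)^{\lambda})$, $i(M)$ is $M$ regarded as a $U(\mathfrak{g},e)$-module on which $O(\mathfrak{h}^{*})^{W}$ acts by the strict character $\lambda$. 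Associativity of the derived tensor product gives
\[
\mathcal{L}(i(M)) \;=\; \tilde{D}(\chi) \otimes^{L}_{U(\mathfrak{g},e)} M \;\cong\; \tilde{D}(\chi) \otimes^{L}_{\tilde{W}(\chi)} \bigl(O(\mathfrak{h}^{*}) \otimes^{L}_{O(\mathfrak{h}^{*})^{W}} M\bigr).
\]

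The next step is to make the inner tensor explicit. By Chevalley--Shephard--Todd, which applies since $\mathrm{char}(k)$ is assumed large (so $\mathrm{char}(k) \nmid |W|$), $O(\mathfrak{h}^{*})$ is free over $O(\mathfrak{h}^{*})^{W}$; hence the inner derived tensor coincides with the ordinary one. Because $J_{\lambda}$ annihilates $M$, we have $O(\mathfrak{h}^{*}) \otimes_{O(\mathfrak{h}^{*})^{W}} M \cong (O(\mathfrak{h}^{*}) / J_{\lambda} O(\mathfrak{h}^{*})) \otimes_{k} M$. Regularity of $\lambda$ means $\mathfrak{h}^{*} \to \mathfrak{h}^{*}/W$ is unramified at $\lambda$, so the fiber algebra $O(\mathfrak{h}^{*}) / J_{\lambda} O(\mathfrak{h}^{*}) \cong \prod_{\mu \in W\cdot \lambda} k$ is étale of rank $|W|$. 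Inserting this splits
\[
\mathcal{L}(i(M)) \;\cong\; \bigoplus_{\mu \in W\cdot\lambda} \tilde{D}(\chi) \otimes^{L}_{\tilde{W}(\chi)} M_{\mu},
\]
where $M_{\mu}$ denotes $M$ extended to a $\tilde{W}(\chi)$-module by having $O(\mathfrak{h}^{*})$ act through $\mu$. This decomposition is exactly the generalized-$O(\mathfrak{h}^{*})$-eigenspace decomposition used to define the summands $\mathcal{L}^{\mu\to\lambda}$, so $\mathcal{L}^{\hat{\lambda}}(i(M))$ is the $\mu = \lambda$ term.

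It remains to identify this term with $\iota \mathcal{L}^{\lambda}(M)$. Two identifications are needed: $\tilde{W}(\chi) \otimes_{O(\mathfrak{h}^{*})} k_{\lambda} = U(\mathfrak{g},e)^{\lambda}$, which is immediate from the definition of $J_{\lambda}$, and $\tilde{D}(\chi) \otimes^{L}_{O(\mathfrak{h}^{*})} k_{\lambda} \cong D^{\lambda,\chi}$ on $\tilde{S}_{\mathcal{N}}^{(1)}$. The latter follows from the parallel Hamiltonian-reduction constructions of the two sheaves (the second is obtained from $D_{h}^{\lambda}$, which is the specialization of $D_{h,\mathfrak{h}}$ at the parameter $\lambda$), together with flatness of $\tilde{D}(\chi)$ over $O(\mathfrak{h}^{*})$ to ensure the derived tensor product here is underived. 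Given these, transitivity produces
\[
\tilde{D}(\chi) \otimes^{L}_{\tilde{W}(\chi)} M_{\lambda} \;\cong\; \bigl(\tilde{D}(\chi) \otimes^{L}_{O(\mathfrak{h}^{*})} k_{\lambda}\bigr) \otimes^{L}_{U(\mathfrak{g},e)^{\lambda}} M \;\cong\; D^{\lambda,\chi} \otimes^{L}_{U(\mathfrak{g},e)^{\lambda}} M \;=\; \iota\mathcal{L}^{\lambda}(M).
\]

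\textbf{Main obstacle.} The subtle point is justifying that all tensor products can be taken underived where needed—in particular, flatness of $\tilde{D}(\chi)$ over the parameter $O(\mathfrak{h}^{*})$, which is implicit in the Frobenius-constant quantization picture but requires care in positive characteristic since the base $O(\mathfrak{h}^{*})$ enters via the Artin--Schreier pullback $\tilde{S}^{(1)} \times_{\mathfrak{h}^{*,(1)}} \mathfrak{h}^{*}$ rather than directly. Once that flatness and the Chevalley freeness are in hand, the chain of identifications above is routine.
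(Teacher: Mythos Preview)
Your proposal is correct and follows essentially the same route as the paper: both arguments hinge on the identification $D^{\lambda,\chi}\cong \tilde{D}(\chi)\otimes_{O(\mathfrak{h}^{*})}k_{\lambda}$ together with the \'etaleness of $\mathfrak{h}^{*}\to\mathfrak{h}^{*}/W$ at the regular point $\lambda$, followed by transitivity of the derived tensor product. The only cosmetic difference is that the paper extracts the $\lambda$-summand by tensoring with the formal completion $O(\mathfrak{h}^{*})^{\hat{\lambda}}$ (which also handles modules with merely generalized character), whereas you exploit the strict character on $i(M)$ to split the fiber $O(\mathfrak{h}^{*})/J_{\lambda}O(\mathfrak{h}^{*})\cong\prod_{\mu\in W\cdot\lambda}k$ directly.
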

Obviously, this lemma proves the needed claim that $\mathcal{L}^{\lambda}$
preserves the bounded derived categories. 
\begin{proof}
(of the lemma). The key point is to rewrite the functor $\mathcal{L}^{\hat{\lambda}}$
in a way that makes it closer to $\mathcal{L}^{\lambda}$. To do that,
we first define the sheaf 
\[
\tilde{D}(\chi)^{\hat{\lambda}}:=\tilde{D}(\chi)\otimes_{O(\mathfrak{h}^{*})}O(\mathfrak{h}^{*})^{\hat{\lambda}}
\]
 where $O(\mathfrak{h}^{*})^{\hat{\lambda}}$ is the completion of
the ring $O(\mathfrak{h}^{*})$ at the ideal generated by $\lambda$.
Then for any $M\in mod_{\lambda}^{f.g.}(U(\mathfrak{g},e))$, the
definitions yield 
\begin{equation}
\mathcal{L}^{\hat{\lambda}}(M)\tilde{=}\tilde{D}(\chi)^{\hat{\lambda}}\otimes_{U(\mathfrak{g},e)}M
\end{equation}
 On the other hand, we have by definition 
\[
D^{\lambda,\chi}=\tilde{D}(\chi)\otimes_{O(\mathfrak{h}^{*})}k_{\lambda}
\]
 (where $k_{\lambda}$ is the one dimensional $O(\mathfrak{h}^{*})$-module
corresponding to the maximal ideal $\lambda$). Now, since $\lambda$
is regular, the projection $\mathfrak{h}^{*}\to\mathfrak{h}^{*}/W$
is etale at $\lambda$, and so there is an isomorphism 
\[
O(\mathfrak{h}^{*})^{\hat{\lambda}}\otimes_{O(\mathfrak{h}^{*}/W)}k_{\lambda}\tilde{=}k
\]
 and so we deduce 
\[
\tilde{D}(\chi)^{\hat{\lambda}}\otimes_{U(\mathfrak{g},e)}^{L}U(\mathfrak{g},e)^{\lambda}\tilde{=(}\tilde{D}(\chi)\otimes_{O(\mathfrak{h}^{*})}O(\mathfrak{h}^{*})^{\hat{\lambda}})\otimes_{U(\mathfrak{g},e)}^{L}U(\mathfrak{g},e)^{\lambda}\tilde{=}D^{\lambda,\chi}
\]
 because of the isomorphism $U(\mathfrak{g},e)^{\lambda}\tilde{=}U(\mathfrak{g},e)\otimes_{O(\mathfrak{h}^{*}/W)}k_{\lambda}$.
But this equivalence is precisely the isomorphism of functors that
we wanted, after writing out the definitions of the localization functors,
and using the realization 6.1. 
\end{proof}
With this taken care of, we now have: 
\begin{thm}
The functors 
\[
R\Gamma:D^{b}(mod_{\lambda}^{coh}(\tilde{D}(\chi)))\to D^{b}(mod_{\lambda}^{f.g.}(U(\mathfrak{g},e)))
\]
 and 
\[
R\Gamma:D^{b}(mod^{coh}(D^{\lambda,\chi}))\to D^{b}(mod^{f.g.}(U(\mathfrak{g},e)^{\lambda}))
\]
 are equivalences of categories, with the inverse functors given by
$\mathcal{L}^{\hat{\lambda}}$ and $\mathcal{L}^{\lambda}$, respectively. 
\end{thm}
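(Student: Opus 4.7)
The plan is to verify that the three structural hypotheses underlying the localization machinery sketched after Theorem 4.4 hold for the pairs $(\tilde{D}(\chi), \Gamma(\tilde{D}(\chi)))$ and $(D^{\lambda,\chi}, U^{\lambda}(\mathfrak{g},e))$; once they do, the Bezrukavnikov--Mirkovic--Rumynin argument goes through essentially verbatim, and the compatibility lemma just established bridges the two statements. The three hypotheses to verify are: (i) $R\Gamma = \Gamma$ on the two sheaves of algebras, with global sections the respective $W$-algebras; (ii) triviality of the canonical class of the ambient variety; and (iii) finite homological dimension of the global sections ring.

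For (i), the identification of global sections is Proposition~\ref{prop:Glob-Secs}, and the vanishing $H^{>0}=0$ on $\tilde{D}(\chi)$ and $D^{\lambda,\chi}$ was established in the preceding lemma via a spectral sequence from the hypothesis that $H^{>0}(\tilde{S}_{\mathcal{N}},O)$ and $H^{>0}(\tilde{S},O)$ vanish. For (ii), both $\tilde{S}_{\mathcal{N}}^{(1)}$ and $\tilde{S}^{(1)}\times_{\mathfrak{h}^{*,(1)}}\mathfrak{h}^{*}$ carry symplectic structures inherited from $\tilde{\mathcal{N}}$ and $\tilde{\mathfrak{g}^{*}}$, hence have trivial canonical bundle. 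For (iii), the associated graded of $U(\mathfrak{g},e)$ is the polynomial algebra $O(S)$ by the preceding lemma, which forces $U(\mathfrak{g},e)$ to have finite homological dimension; and $\Gamma(\tilde{D}(\chi))=U(\mathfrak{g},e)\otimes_{O(\mathfrak{h}^{*})^{W}}O(\mathfrak{h}^{*})$ is a base change along the free extension $O(\mathfrak{h}^{*})^{W}\to O(\mathfrak{h}^{*})$ (Chevalley--Shephard--Todd), so it too has finite homological dimension.

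Granted these, the argument proceeds formally. Adjointness of $(\mathcal{L}^{\hat{\lambda}},R\Gamma)$ and $(\mathcal{L}^{\lambda},R\Gamma)$ is categorical. Applying $R\Gamma\circ\mathcal{L}$ to the rank-one free module recovers the global sections ring by (i); taking free resolutions—bounded by (iii)—and using that $\mathcal{L}$ commutes with arbitrary sums propagates this to all of $D^{b}$, yielding fully faithfulness. Essential surjectivity is where (ii) enters: triviality of the canonical class makes Grothendieck--Serre duality on the triangulated essential image coincide with the shift by $\dim\mathcal{B}_{\chi}$, and this closure property, together with the existence of a compact generator, forces the essential image to coincide with the whole target (this is the closure-under-Serre-duality argument of \cite{key-5}). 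Finally, the compatibility lemma $\iota\mathcal{L}^{\lambda}\cong\mathcal{L}^{\hat{\lambda}}i$ shows that once the extended version is known, its specialization at a regular $\lambda$ (using that $\mathfrak{h}^{*}\to\mathfrak{h}^{*}/W$ is étale at such a $\lambda$) produces the $D^{\lambda,\chi}$-statement automatically.

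The expected main obstacle is essential surjectivity, exactly as in \cite{key-5}: the categorical closure argument requires some care to identify a generator of $D^{b}(\mathrm{mod}^{coh}_{\lambda}(\tilde{D}(\chi)))$ that one can check lies in the image, and one must handle the extended and non-extended versions in the correct order. In practice I would prove the equivalence first for $\tilde{D}(\chi)$ by running the BMR closure-under-$[\dim\mathcal{B}_{\chi}]$ argument, and then deduce the statement for $D^{\lambda,\chi}$ from the preceding compatibility lemma, so that no new categorical argument is needed at regular integral $\lambda$.
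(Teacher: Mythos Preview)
Your plan is the paper's plan: check assumptions (4.1)--(4.3) and invoke the Bezrukavnikov--Mirkovic--Rumynin argument. Your handling of (i) and (ii) matches the paper exactly, and you correctly identify that the compatibility lemma is what lets one pass from the $\tilde{D}(\chi)$-statement to the $D^{\lambda,\chi}$-statement without ever knowing whether $U^{\lambda}(\mathfrak{g},e)$ has finite homological dimension (it needn't: its associated graded is $O(S_{\mathcal N})$, which is singular).

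The one genuine gap is in your treatment of (iii) for the \emph{extended} algebra $\Gamma(\tilde{D}(\chi))=U(\mathfrak{g},e)\otimes_{O(\mathfrak h^*)^W}O(\mathfrak h^*)$. Freeness of $O(\mathfrak h^*)$ over $O(\mathfrak h^*)^W$ does not by itself force a tensor product $A\otimes_R S$ to inherit finite global dimension from $A$; free base change of an algebra of finite global dimension can easily have infinite global dimension (take $A=k[x]$, $R=k$, $S=k[G]$ for a finite group with $\operatorname{char}k\mid |G|$). So your justification of (iii) for $\Gamma(\tilde{D}(\chi))$ is incomplete as written. The paper sidesteps this entirely, exactly as \cite{key-5} does: rather than asserting finite homological dimension for $\Gamma(\tilde{D}(\chi))$, it defines $\mathcal L$ directly from the \emph{smaller} algebra $U(\mathfrak{g},e)$ (whose associated graded is the polynomial ring $O(S)$, so finite global dimension is clear), obtains $\mathcal L^{\hat\lambda}$ as a direct summand of this $\mathcal L$ under the $O(\mathfrak h^*)$-action on $\tilde{D}(\chi)$, and then uses the compatibility lemma to reach $\mathcal L^{\lambda}$. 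In other words, the paper has a three-step descent $U(\mathfrak{g},e)\rightsquigarrow\mathcal L^{\hat\lambda}\rightsquigarrow\mathcal L^{\lambda}$ where only the first algebra is asserted to have finite homological dimension; you collapsed the first two steps and thereby incurred an obligation the paper never takes on. If you want to repair your version rather than adopt the paper's, you would need to argue that $\operatorname{gr}\Gamma(\tilde{D}(\chi))=O(S)\otimes_{S(\mathfrak h)^W}S(\mathfrak h)$ is a regular ring---equivalently that $S\times_{\mathfrak h^*/W}\mathfrak h^*$ is smooth---which is a nontrivial geometric statement not supplied by Chevalley--Shephard--Todd alone.

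One minor correction: the Serre duality shift is by the dimension of the ambient smooth variety ($\tilde S_{\mathcal N}$ or $\tilde S\times_{\mathfrak h^{*(1)}}\mathfrak h^*$), not by $\dim\mathcal B_\chi$.
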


\subsection{Restriction to a Springer Fibre}

Analogously to \cite{key-5}, chapter 4, (c.f. section 4 above) we
can consider the above equivalence of categories after restriction
to the springer fibre $\mathcal{B}_{\chi}^{(1)}$. In particular,
we can define categories $mod_{\mathcal{B}_{\chi}^{(1)}}^{coh}(\tilde{D}(\chi))$
and $mod_{\mathcal{B}_{\chi}^{(1)}}^{coh}(D^{\lambda,\chi})$ for
sheaves which are set theoretically supported on the variety $\mathcal{B}_{\chi}^{(1)}$.
On the representation-theoretic side, we should restrict to those
representations on which the central subalgebra $O(\tilde{S}^{(1)})$
acts via the generalized character $\chi^{(1)}$. We denote this category
by $mod_{\chi}(U(\mathfrak{g},e))$. Then from the above equivalences
of categories we immediately deduce the following 
\begin{thm}
We have the following equivalences of categories: 

\[
R\Gamma:D^{b}(mod_{\lambda,\mathcal{B}_{\chi}^{(1)}}^{coh}(\tilde{D}(\chi)))\to D^{b}(mod_{\lambda,\chi}^{f.g.}(U(\mathfrak{g},e)))
\]

\[
R\Gamma:D^{b}(mod_{\mathcal{B}_{\chi}^{(1)}}^{coh}(D^{\lambda,\chi}))\to D^{b}(mod_{\chi}^{f.g.}(U^{\lambda}(\mathfrak{g},e)))
\]

\end{thm}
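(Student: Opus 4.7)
The plan is to deduce this from the two unrestricted equivalences in the previous theorem by imposing a central character / support condition that is preserved by the functors $R\Gamma$ and $\mathcal{L}^{\hat{\lambda}}$ (resp.\ $\mathcal{L}^{\lambda}$). I will state only the argument for the second equivalence; the first is identical with $\tilde{D}(\chi)$ replacing $D^{\lambda,\chi}$ and $U(\mathfrak{g},e)$ replacing $U^{\lambda}(\mathfrak{g},e)$.

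First I would isolate the relevant central subalgebra. By construction, $D^{\lambda,\chi}$ is a Frobenius-constant quantization of $\tilde{S}_{\mathcal{N}}^{(1)}$, so $O(\tilde{S}_{\mathcal{N}}^{(1)})$ sits in the center of $D^{\lambda,\chi}$, and by \prettyref{prop:Glob-Secs} it maps to the $p$-center of $U^{\lambda}(\mathfrak{g},e)=\Gamma(D^{\lambda,\chi})$. Under this map, the maximal ideal $m_{\chi^{(1)}}\subset O(\tilde{S}_{\mathcal{N}}^{(1)})$ cuts out the closed subvariety $\mathcal{B}_{\chi}^{(1)}=(\mu^{(1)})^{-1}(\chi^{(1)})$, and on the representation side it corresponds exactly to the generalized $p$-character $\chi$. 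Thus, on coherent sheaves, set-theoretic support on $\mathcal{B}_{\chi}^{(1)}$ is equivalent to the condition that $m_{\chi^{(1)}}$ acts locally nilpotently, and on modules the condition defining $mod_{\chi}^{f.g.}(U^{\lambda}(\mathfrak{g},e))$ is the strictly parallel one.

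Next I would check that the two functors respect these conditions. Since $R\Gamma$ is $O(\tilde{S}_{\mathcal{N}}^{(1)})$-linear and the identification $\Gamma(D^{\lambda,\chi})\tilde{=}U^{\lambda}(\mathfrak{g},e)$ intertwines the central actions, a coherent $D^{\lambda,\chi}$-module on which $m_{\chi^{(1)}}$ is locally nilpotent has all of its cohomology modules annihilated by a power of $m_{\chi^{(1)}}$; equivalently, $R\Gamma$ sends $D^{b}(mod_{\mathcal{B}_{\chi}^{(1)}}^{coh}(D^{\lambda,\chi}))$ into $D^{b}(mod_{\chi}^{f.g.}(U^{\lambda}(\mathfrak{g},e)))$. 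Conversely, $\mathcal{L}^{\lambda}(M)=D^{\lambda,\chi}\otimes_{U^{\lambda}(\mathfrak{g},e)}^{L}M$ is also $O(\tilde{S}_{\mathcal{N}}^{(1)})$-linear in the appropriate sense, so if $m_{\chi^{(1)}}$ acts locally nilpotently on $M$ it does so on every cohomology sheaf of $\mathcal{L}^{\lambda}(M)$, which thereby lies in $D^{b}(mod_{\mathcal{B}_{\chi}^{(1)}}^{coh}(D^{\lambda,\chi}))$.

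Given that both functors restrict to the generalized-$\chi$-summand on each side, the unit and counit of the adjunction from the previous theorem automatically restrict as well and remain isomorphisms, giving the desired equivalence. The step that requires the most care is the compatibility of central actions: one must verify that the central map $O(\tilde{S}_{\mathcal{N}}^{(1)})\to Z(\Gamma(D^{\lambda,\chi}))$ furnished by the Frobenius-constant quantization structure agrees, after the identification of \prettyref{prop:Glob-Secs}, with the image of the $p$-center of $U(\mathfrak{g}_k)$ in $U^{\lambda}(\mathfrak{g},e)$ under Hamiltonian reduction. This is where I expect the real work to lie; once it is in place, the present theorem is a formal consequence of the unrestricted localization equivalence together with the fact that taking generalized-eigenspaces for a central action is an exact operation that commutes with both $R\Gamma$ and $\mathcal{L}^{\lambda}$.
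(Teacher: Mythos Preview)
Your argument is correct and is exactly the unpacking of what the paper leaves implicit: the paper simply says ``from the above equivalences of categories we immediately deduce the following,'' and your proof is the standard justification via linearity of $R\Gamma$ and $\mathcal{L}^{\lambda}$ over the common central subalgebra coming from the Frobenius-constant structure. One small wording fix: $m_{\chi^{(1)}}$ is a maximal ideal of $O(S_{\mathcal{N}}^{(1)})$ (the affine base), not of $O(\tilde{S}_{\mathcal{N}}^{(1)})$; its extension cuts out $\mathcal{B}_{\chi}^{(1)}$, and it is this ideal of the base that matches the $p$-center on the module side.
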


\subsection{Azumaya Splitting}

Our aim in this section is to give a brief explanation of the structure
of our Azumaya algebras upon restriction to the springer fibre $\mathcal{B}_{\chi}^{(1)}$.
The result, which will follow from the analogous one in \cite{key-5},
is the following: 
\begin{thm}
a) For all $\lambda\in\mathfrak{h}^{*}$, the Azumaya algebra $\tilde{D}(\chi)$
splits on the formal neighborhood of $\mathcal{B}_{\chi}^{(1)}\times_{\mathfrak{h}^{*(1)}}\lambda$
in $\tilde{S}^{(1)}\times_{\mathfrak{h}^{*(1)}}\mathfrak{h}^{*}$. 

b) Let $M_{\chi}^{\lambda}$ be the vector bundle appearing in \cite{key-5},
theorem 5.1.1, and $E_{\chi}^{\lambda}$ be the vector bundle appearing
in part a). Let $i$ denote the inclusion map $\tilde{S}^{(1)}\times_{\mathfrak{h}^{*(1)}}\mathfrak{h}^{*}\to\tilde{\mathfrak{g}}\times_{\mathfrak{h}^{*(1)}}\mathfrak{h}^{*}$.
Then there is a vector space $V$, of rank $p^{dim\mathcal{B}-dim\mathcal{B}_{e}}$,
such that $i^{*}M_{\chi}^{\lambda}\tilde{=}E_{\chi}^{\lambda}\otimes_{k}V$. 
\end{thm}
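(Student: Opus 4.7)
The strategy is to deduce both (a) and (b) from the corresponding splitting of $\tilde{D}$ on the formal neighborhood of $\mathcal{B}_\chi^{(1)}\times_{\mathfrak{h}^{*(1)}}\lambda$ in $\tilde{\mathfrak{g}}^{*(1)}\times_{\mathfrak{h}^{*(1)}}\mathfrak{h}^*$, which is the content of \cite{key-5}, Theorem 5.1.1, by transporting the splitting bundle $M_\chi^\lambda$ through the Hamiltonian reduction that defines $\tilde{D}(\chi)$. The essential ingredient is that the whole construction---the moment map $\mu:U_h(\mathfrak{m}_l)\to\tilde{D}$, the Azumaya structure, and the splitting bundle $M_\chi^\lambda$ itself---is $M_l$-equivariant, so that the two-step Hamiltonian reduction (quotient by $\mu(I_\chi)$, then take $M_l$-invariants) makes sense at the level of the splitting data.

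For part (a), I would restrict $M_\chi^\lambda$ to the closed subscheme $\mu^{-1}(\chi)^{(1)}\times_{\mathfrak{h}^{*(1)}}\mathfrak{h}^*$ (on which $\mu(I_\chi)$ acts trivially), push forward along the quotient map $p$ to $\tilde{S}^{(1)}\times_{\mathfrak{h}^{*(1)}}\mathfrak{h}^*$, and take $M_l$-invariants, defining
\[
E_\chi^\lambda := \bigl(p_*(M_\chi^\lambda|_{\mu^{-1}(\chi)^{(1)}\times_{\mathfrak{h}^{*(1)}}\mathfrak{h}^*})\bigr)^{M_l}.
\]
The freeness of the $M_l$-action on the relevant locus of $\mu^{-1}(\chi)$, which underlies the Gan--Ginzburg Hamiltonian-reduction presentation of $S$ from \cite{key-13}, guarantees that $E_\chi^\lambda$ is locally free, and the $M_l$-invariants of the $\tilde{D}/\langle\mu(I_\chi)\rangle$-action descend to an action of $\tilde{D}(\chi)$ on $E_\chi^\lambda$. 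A local rank computation then identifies this action with $\mbox{End}(E_\chi^\lambda)$ on the formal neighborhood of $\mathcal{B}_\chi^{(1)}\times_{\mathfrak{h}^{*(1)}}\lambda$, establishing (a).

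For part (b), the point is to compare $i^*M_\chi^\lambda$, which is a splitting bundle for the restricted Azumaya algebra $i^*\tilde{D}$, with $E_\chi^\lambda$, a splitting bundle for $\tilde{D}(\chi)$. The Hamiltonian reduction presentation makes $\tilde{S}^{(1)}$ into both a slice (embedded by $i$) and a quotient $\mu^{-1}(\chi)^{(1)}/M_l$; comparing these two descriptions should give a natural Morita equivalence $i^*\tilde{D}\cong\tilde{D}(\chi)\otimes_k\mbox{End}(V)$ for a finite-dimensional vector space $V$, and hence $i^*M_\chi^\lambda\cong E_\chi^\lambda\otimes_k V$. The rank of $V$ is forced by a comparison of Azumaya ranks: $\tilde{D}$ has Azumaya rank $p^{\dim\mathcal{B}}$ (since the symplectic dimension of $T^*\mathcal{B}$ is $2\dim\mathcal{B}$), while $\tilde{D}(\chi)$ has Azumaya rank $p^{\dim\mathcal{B}_e}$ (since the symplectic leaves of $\tilde{S}$ over $\mathfrak{h}^*$ have dimension $2\dim\mathcal{B}_e$), giving $\dim V=p^{\dim\mathcal{B}-\dim\mathcal{B}_e}$ as required.

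The main obstacle I anticipate is establishing the Morita equivalence $i^*\tilde{D}\cong\tilde{D}(\chi)\otimes\mbox{End}(V)$ at the level of sheaves of algebras, rather than merely matching Azumaya ranks; this should follow from a careful application of the Frobenius-constant-quantization formalism of \cite{key-3}, section 5, combined with the explicit Gan--Ginzburg slice description, but it is the technical heart of the argument.
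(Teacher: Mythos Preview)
Your approach differs from the paper's in a significant way. Rather than constructing $E_\chi^\lambda$ by pushing the bundle $M_\chi^\lambda$ through Hamiltonian reduction, the paper applies Hamiltonian reduction to the \emph{algebra} isomorphism from \cite{key-5} (Proposition~\ref{prop:azumaya} here): on the unramified locus, $\tilde{D}$ is the pullback of $U(\mathfrak{g})$ along $\tilde{\mathfrak{g}}^{*(1)}\times_{\mathfrak{h}^{*(1)}}\mathfrak{h}^*_{unr}\to\mathfrak{Z}_{unr}$. Reducing both sides gives that $\tilde{D}(\chi)$ on the unramified locus is the pullback of $U(\mathfrak{g},e)$ from $S^{(1)}\times_{\mathfrak{h}^{*(1)}/W}\mathfrak{h}^*_{unr}$. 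The splitting question then collapses to showing that $U(\mathfrak{g},e)_\chi^\lambda$ is a matrix algebra over $k$, and this is read off directly from Premet's isomorphism $U(\mathfrak{g})_\chi\cong\mathrm{Mat}_{p^{d(e)}}(k)\otimes_k U(\mathfrak{g},e)_\chi$ from \cite{key-21}, since $U(\mathfrak{g})_\chi^\lambda$ is already known to be a matrix algebra for unramified $\lambda$. The same Premet isomorphism gives $\dim V=p^{d(e)}$ for free, which is (b). Non-unramified $\lambda$ are handled by the twist-by-a-character argument of \cite{key-5}.

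What this buys: the paper completely sidesteps the obstacle you flagged. The Morita equivalence $i^*\tilde{D}\cong\tilde{D}(\chi)\otimes\mathrm{End}(V)$ you would need is essentially Premet's isomorphism extended over the formal neighborhood, so your route would eventually invoke the same input, but only after the additional work of constructing $E_\chi^\lambda$ by descent and verifying by hand that it is a splitting bundle. Your approach is more intrinsically geometric and would make the compatibility (b) nearly tautological once the Morita statement is in place; the paper's route is shorter because it reduces the whole question to the fiber over a single point, where Premet's result applies off the shelf.
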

The proof of this theorem relies on an examination of the proof of
theorem 5.1.1 in \cite{key-5}. In particular, the argument there
relies on an analysis of the generic structure of the Azumaya algebra
$\tilde{D}$, which works as follows: 

We let $\mathfrak{h}_{unr}^{*}$ denote the open subset of $\mathfrak{h}^{*}$
consisting of those $\lambda$ such that for any coroot $\alpha$
we have either $<\alpha,\lambda+\rho>=0$ or $<\alpha,\lambda>\notin\mathbb{F}_{p}$.
These are called the unramified weights. Then Brown and Gordon \cite{key-7}
described the structure of $U(\mathfrak{g})$ as an algebra over the
scheme $\mathfrak{Z}_{unr}:=\mathfrak{g}^{*(1)}\times_{\mathfrak{h}^{*(1)}/W}\mathfrak{h}_{unr}^{*}$
as follows: 
\begin{prop}
The algebra $U(\mathfrak{g})\otimes_{\mathfrak{Z}}\mathfrak{Z}_{unr}$
is Azumaya over $\mathfrak{Z}_{unr}$. 
\end{prop}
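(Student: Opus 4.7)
The plan is to verify the two standard conditions for the base change $A:=U(\mathfrak{g})\otimes_{\mathfrak{Z}}O(\mathfrak{Z}_{unr})$ to be an Azumaya algebra of PI-degree $n:=p^{\dim\mathcal{B}}$ over $\mathfrak{Z}_{unr}$: namely, that $A$ is locally free of constant rank $n^{2}$ over $O_{\mathfrak{Z}_{unr}}$, and that every closed-point fiber of $A$ is isomorphic to a matrix algebra $M_{n}(k(z))$.

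The rank computation is routine. By the PBW theorem, $U(\mathfrak{g}_{k})$ is free of rank $p^{\dim\mathfrak{g}}$ over the $p$-center $Z_{p}\tilde{=}S(\mathfrak{g}^{(1)})$. Using the description of the full center, $\mathfrak{Z}\tilde{=}Z_{p}\otimes_{S(\mathfrak{h}^{(1)})^{W}}S(\mathfrak{h})^{W}$, together with the fact that the Artin-Schreier extension makes $S(\mathfrak{h})^{W}$ free of rank $p^{\mbox{rank}(\mathfrak{g})}$ over $S(\mathfrak{h}^{(1)})^{W}$, one concludes that $U(\mathfrak{g}_{k})$ is free over $\mathfrak{Z}$ of rank $p^{\dim\mathfrak{g}-\mbox{rank}(\mathfrak{g})}=p^{2\dim\mathcal{B}}=n^{2}$. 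Flat base change along $\mathfrak{Z}_{unr}\to\mathfrak{Z}$ preserves this, giving the rank condition.

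The substantive part is showing that every closed-point fiber is central simple. A closed point $z\in\mathfrak{Z}_{unr}$ is a pair $(\chi^{(1)},\lambda)$ with $\lambda\in\mathfrak{h}^{*}_{unr}$, and $A\otimes k(z)$ is the reduced enveloping algebra $U_{\chi}^{\lambda}(\mathfrak{g}_{k})$. I would first establish primeness of $A$: it is generically Azumaya of rank $n^{2}$, since $U(\mathfrak{g}_{k})$ is Azumaya over the regular semisimple locus of $\mathfrak{Z}$ (which meets $\mathfrak{Z}_{unr}$), and $\mathfrak{Z}_{unr}$ is irreducible. Next I would use the unramified hypothesis---for each coroot $\alpha$, either $\langle\alpha,\lambda+\rho\rangle=0$ or $\langle\alpha,\lambda\rangle\notin\mathbb{F}_{p}$---to show that the composition $\mathfrak{h}^{*}_{unr}\to\mathfrak{h}^{*(1)}/W$ separates the distinct integral Harish-Chandra characters lying above any given point in the base, so that the fiber $U_{\chi}^{\lambda}(\mathfrak{g}_{k})$ isolates a single block of the ambient reduced enveloping algebra $U_{\chi}(\mathfrak{g}_{k})$.

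The main obstacle is this last step: a careful analysis of the interplay between the Artin-Schreier map $\mathfrak{h}^{*}\to\mathfrak{h}^{*(1)}$ and the dot-action of $W$, using the precise defining condition of $\mathfrak{h}^{*}_{unr}$ to rule out the appearance of further central idempotents in the fiber. Once this is in hand, a fiber that is simple and of dimension $n^{2}$ over $k(z)$ is automatically a matrix algebra by Wedderburn, and the Azumaya property follows. This is precisely the content of Brown-Gordon's theorem \cite{key-7} being invoked.
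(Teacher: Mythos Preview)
The paper does not give its own proof of this proposition: it is quoted verbatim as a result of Brown and Gordon \cite{key-7}, and is used as a black box to deduce the next proposition about $\tilde{D}$. So there is no in-paper argument to compare your sketch against, and you yourself note in your final sentence that the content here is precisely Brown--Gordon's theorem.

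As for the sketch itself: the strategy (constant rank $p^{2\dim\mathcal{B}}$ plus central-simplicity of every closed-point fiber) is the right shape, and the rank computation is essentially correct, though one should be a little careful---$U(\mathfrak{g}_{k})$ is only known to be locally free over the smooth locus of $\mathfrak{Z}$, so part of the work is checking that $\mathfrak{Z}_{unr}$ lies in that locus. The step you flag as the main obstacle is indeed where the real content lies, but the phrasing ``isolates a single block'' undersells what is needed: a single block of $U_{\chi}(\mathfrak{g}_{k})$ need not be a matrix algebra, so one must actually show that $U_{\chi}^{\lambda}(\mathfrak{g}_{k})$ has a unique simple module of dimension exactly $p^{\dim\mathcal{B}}$ for unramified $\lambda$. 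Brown--Gordon's argument goes through the theory of baby Verma modules and a count of simples, rather than an abstract block decomposition. Your sketch is honest about leaving this gap, but as written it does not close it.
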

In \cite{key-5}, chapter 3, they deduce the following (which is recorded
there as proposition 5.2.1b)) 
\begin{prop}
\label{prop:azumaya}$U(\mathfrak{g})\otimes_{\mathfrak{Z}}O(\tilde{\mathfrak{g}^{*}}^{(1)}\times_{\mathfrak{h}^{*(1)}}\mathfrak{h}_{unr}^{*})\tilde{\to}\tilde{D}|_{\tilde{\mathfrak{g}^{*}}^{(1)}\times_{\mathfrak{h}^{*(1)}}\mathfrak{h}_{unr}^{*}}$. 
\end{prop}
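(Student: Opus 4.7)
The plan is to construct a natural algebra homomorphism from the left-hand side to the right-hand side and then verify that it is an isomorphism by combining the Brown--Gordon Azumaya property of $U(\mathfrak{g})$ over $\mathfrak{Z}_{unr}$ with the already-established Azumaya property of $\tilde{D}$ and matching ranks.

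First I would construct the map. By the construction of $\tilde{D}=D_{h,\mathfrak{h}}/(h-1)$ as a quotient of $U^{0}=O_{\mathcal{B}}\otimes U(\mathfrak{g})$, there is a canonical algebra map $U(\mathfrak{g})\to\Gamma(\tilde{D})$. The Frobenius-constant structure of $D_{h,\mathfrak{h}}$, recalled in the earlier section on quantized twisted differential operators, produces a central inclusion $O(\tilde{\mathfrak{g}^{*}}^{(1)}\times_{\mathfrak{h}^{*,(1)}}\mathfrak{h}^{*})\hookrightarrow\tilde{D}$. These two maps, after restriction to the unramified locus, should combine into an algebra morphism
\[
U(\mathfrak{g})\otimes_{\mathfrak{Z}}O(\tilde{\mathfrak{g}^{*}}^{(1)}\times_{\mathfrak{h}^{*(1)}}\mathfrak{h}_{unr}^{*})\longrightarrow\tilde{D}|_{\tilde{\mathfrak{g}^{*}}^{(1)}\times_{\mathfrak{h}^{*(1)}}\mathfrak{h}_{unr}^{*}},
\]
provided one checks that the two induced actions of $\mathfrak{Z}$ on $\tilde{D}$---via $Z(U(\mathfrak{g}))\simeq O(\mathfrak{Z})$ sitting inside $U(\mathfrak{g})$ on the one hand, and via the natural morphism $\tilde{\mathfrak{g}^{*}}^{(1)}\times_{\mathfrak{h}^{*(1)}}\mathfrak{h}^{*}\to\mathfrak{Z}$ on the other---coincide. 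This is a direct check against the definitions of the Artin--Schreier map and Grothendieck's invariants map.

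Second, I would observe that both sides are Azumaya algebras of the same rank over $\tilde{\mathfrak{g}^{*}}^{(1)}\times_{\mathfrak{h}^{*(1)}}\mathfrak{h}_{unr}^{*}$: the right-hand side is the restriction of the Azumaya algebra $\tilde{D}$, and the left-hand side is the base change, along the natural map $\tilde{\mathfrak{g}^{*}}^{(1)}\times_{\mathfrak{h}^{*(1)}}\mathfrak{h}_{unr}^{*}\to\mathfrak{Z}_{unr}$, of the Azumaya algebra $U(\mathfrak{g})$ over $\mathfrak{Z}_{unr}$ supplied by the preceding Brown--Gordon proposition. Both carry Azumaya rank $p^{\dim\mathcal{B}}$.

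To finish, I would verify the map is surjective. Locally on $\mathcal{B}$, $\tilde{D}$ is generated over its structure ring by the image of $U(\mathfrak{g})$---this is immediate from the construction as a quotient of $U^{0}=O_{\mathcal{B}}\otimes U(\mathfrak{g})$. A surjective morphism between two locally free sheaves of equal finite rank over a noetherian base is automatically an isomorphism. I expect the main obstacle to lie entirely in the first step: carefully checking the compatibility of the two central $\mathfrak{Z}$-actions, so that the tensor product on the left is well-formed and the natural map well-posed. Once this compatibility is in place, the Azumaya ranks agree and the surjectivity argument forces the map to be an isomorphism.
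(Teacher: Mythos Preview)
The paper does not give a proof of this proposition: it is simply quoted from \cite{key-5}, where it appears as Proposition~5.2.1(b). So there is nothing to compare your argument to in the present paper; you are supplying an argument where the author merely cites one.

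Your overall strategy---build the natural map, observe both sides are Azumaya of rank $p^{\dim\mathcal{B}}$, conclude---is sound, and is essentially how \cite{key-5} proceeds. But your surjectivity step has a gap. You write that $\tilde{D}$ is ``generated over its structure ring by the image of $U(\mathfrak{g})$'' because it is a quotient of $U^{0}=O_{\mathcal{B}}\otimes U(\mathfrak{g})$. The base ring for the left-hand side, however, is $O(\tilde{\mathfrak{g}^{*}}{}^{(1)}\times_{\mathfrak{h}^{*(1)}}\mathfrak{h}_{unr}^{*})$, which contains $O_{\mathcal{B}^{(1)}}$, not $O_{\mathcal{B}}$. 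So the local sections coming from $O_{\mathcal{B}}$ are not manifestly in the image, and your ``immediate from the construction'' claim does not go through as stated.

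The fix is easy and in fact makes the surjectivity check unnecessary. Once you know both sides are Azumaya of the same rank over the same base, look at the map on a closed fiber: it is a unital $k$-algebra map between central simple algebras of equal dimension, hence has trivial kernel (the source is simple), hence is an isomorphism. Nakayama then gives surjectivity of the sheaf map, and equal ranks force injectivity. So replace your third paragraph with this fiberwise argument and the proof is complete.
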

Thus, if the weight $\lambda$ is unramified, the restriction of $\tilde{D}$
to the formal neighborhood $\mathcal{B}_{\chi}^{(1)}\times_{\mathfrak{h}^{*(1)}}\lambda$,
is the pullback of an Azumaya algebra on the formal neighborhood of
the point $\chi^{(1)}$ in $\mathfrak{g}^{*(1)}$. Since every Azumaya
algebra over the formal neighborhood of a point is split, this implies
the existence of $M_{\chi}^{\lambda}$ in this case. One can deduce
the general case from this one, by noting that there is a functor
of {}``twist by a character'' which interchanges different weights
(\cite{key-5}, lemma 2.3.1). 

To see how to deduce the result in our case, we apply Hamiltonian
reduction to both sides of proposition \ref{prop:azumaya}. We immediately
arrive at the isomorphism 
\[
U(\mathfrak{g},e)\otimes_{S^{*(1)}\times_{\mathfrak{h}^{*(1)}/W}\mathfrak{h}^{*}}O(\tilde{S{}^{*}}^{(1)}\times_{\mathfrak{h}^{*(1)}}\mathfrak{h}_{unr}^{*})\tilde{\to}\tilde{D}(\chi)|_{\tilde{S{}^{*}}^{(1)}\times_{\mathfrak{h}^{*(1)}}\mathfrak{h}_{unr}^{*}}
\]

Thus we will be able to finish the argument the same way if we can
show that the algebra appearing on the left hand side is Azumaya,
at least upon restriction to the formal neighborhood of the point
$\chi^{(1)}$. This is indeed the case, and we can argue as follows:
let $U(\mathfrak{g},e)_{\chi}$ denote the quotient of the algebra
$U(\mathfrak{g},e)$ by the ideal generated by point $\chi^{(1)}\in\tilde{S}^{(1)}$,
and let $U(\mathfrak{g})_{\chi}$ denote the enveloping algebra at
the $p$-character $\chi$. Then Premet in \cite{key-21} has given
an isomorphism 
\[
U(\mathfrak{g})_{\chi}\tilde{=}\mbox{Mat}_{p^{d(e)}}(k)\otimes_{k}U(\mathfrak{g},e)_{\chi}
\]
 where $d(e)$ is the number $\mbox{dim}(\mathcal{B})-\mbox{dim}(\mathcal{B}_{\chi})$.
Thus the restriction $U(\mathfrak{g},e)_{\chi}^{\lambda}$ (with $\lambda$
unramified) is indeed a matrix algebra over $k$, as required. The
general theorem (part a) now follows by the same {}``twist by a character''
argument as in \cite{key-5}. Part b) follows immediately from the
above isomorphism.

Finally, combining the last two sections we arrive at the following
equivalences: 
\begin{thm}
There are equivalences of categories, for regular integral $\lambda$: 

\[
D^{b}(Coh_{\mathcal{B}_{\chi}^{(1)}\times\lambda}(\tilde{S^{(1)}}\times_{\mathfrak{h}^{*(1)}}\mathfrak{h}^{*}))\tilde{\to}D^{b}(mod_{\lambda,\mathcal{B}_{\chi}^{(1)}}^{coh}(\tilde{D}(\chi)))\tilde{\to}D^{b}(mod_{\lambda,\chi}^{f.g.}(U(\mathfrak{g},e)))
\]

\[
D^{b}(Coh_{\mathcal{B}_{\chi}^{(1)}}(\tilde{S}_{\mathcal{N}}))\tilde{\to}D^{b}(mod_{\mathcal{B}_{\chi}^{(1)}}^{coh}(D^{\lambda,\chi}))\tilde{\to}D^{b}(mod_{\chi}^{f.g.}(U^{\lambda}(\mathfrak{g},e)))
\]

\end{thm}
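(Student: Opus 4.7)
The second equivalence in each line is exactly the content of the localization theorem after restriction to the Springer fibre, proved in the previous subsection. So all that remains is to establish the first equivalence in each line, namely the passage between coherent sheaves on the (formal neighborhood of the) Springer fibre inside $\tilde{S}_{\mathcal{N}}^{(1)}$ (resp.\ $\tilde{S}^{(1)}\times_{\mathfrak{h}^{*(1)}}\mathfrak{h}^{*}$) and coherent modules over the restricted Azumaya algebras $D^{\lambda,\chi}$ (resp.\ $\tilde{D}(\chi)$).

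The plan is to exploit the Azumaya splitting theorem of the previous subsection. Since an object of $mod_{\mathcal{B}_{\chi}^{(1)}}^{coh}(D^{\lambda,\chi})$ is, by definition, set-theoretically supported on the closed subscheme $\mathcal{B}_{\chi}^{(1)}$, the canonical restriction functor to the formal neighborhood of $\mathcal{B}_{\chi}^{(1)}$ is an equivalence onto coherent modules set-theoretically supported there. The same applies on the geometric side for $Coh_{\mathcal{B}_{\chi}^{(1)}}(\tilde{S}_{\mathcal{N}}^{(1)})$. Now the Azumaya splitting supplies a splitting bundle $E_{\chi}^{\lambda}$ trivializing $D^{\lambda,\chi}$ on this formal neighborhood; Morita equivalence with respect to $E_{\chi}^{\lambda}$, i.e.\ the functor $\mathcal{F}\mapsto E_{\chi}^{\lambda}\otimes_{O}\mathcal{F}$ with quasi-inverse $\mathcal{M}\mapsto \mathcal{H}om_{D^{\lambda,\chi}}(E_{\chi}^{\lambda},\mathcal{M})$, gives the desired equivalence of abelian categories, and hence of their bounded derived categories. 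The same argument, using the splitting bundle of part (a) of the Azumaya splitting theorem for $\tilde{D}(\chi)$ on the formal neighborhood of $\mathcal{B}_{\chi}^{(1)}\times_{\mathfrak{h}^{*(1)}}\lambda$, yields the corresponding equivalence in the extended setting.

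Concretely, I would proceed as follows. First, note that for any Azumaya algebra $\mathfrak{A}$ on a scheme $Y$ which is split on the formal neighborhood of a closed subscheme $Z\subset Y$ by a vector bundle $E$, tensoring with $E$ gives a Morita equivalence $Coh_{Z}(\mathfrak{A})\tilde{=}Coh_{Z}(O_{Y})$, because this equivalence is a local assertion in the formal neighborhood and all objects involved are (set-theoretically) supported there. Apply this to $(Y,Z,\mathfrak{A})=(\tilde{S}_{\mathcal{N}}^{(1)},\mathcal{B}_{\chi}^{(1)},D^{\lambda,\chi})$, respectively to $(\tilde{S}^{(1)}\times_{\mathfrak{h}^{*(1)}}\mathfrak{h}^{*},\,\mathcal{B}_{\chi}^{(1)}\times_{\mathfrak{h}^{*(1)}}\lambda,\,\tilde{D}(\chi))$, using the splitting bundles produced above. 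Passing to bounded derived categories preserves the equivalence, yielding the first arrow in each line. Composing with the localization equivalence $R\Gamma$ (and its inverse $\mathcal{L}^{\lambda}$, $\mathcal{L}^{\hat{\lambda}}$) gives the full chain.

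The only real subtlety is to check that the formal-neighborhood reduction is harmless on both sides of the picture; but since both the sheaves of modules we consider and the coherent sheaves we consider are by hypothesis set-theoretically supported on $\mathcal{B}_{\chi}^{(1)}$ (respectively $\mathcal{B}_{\chi}^{(1)}\times_{\mathfrak{h}^{*(1)}}\lambda$), the restriction functor to the formal neighborhood is in each case an equivalence onto its image, which is exactly the category of coherent modules supported on that closed subscheme. Modulo this bookkeeping, the theorem is simply the composition of the localization equivalence with the Morita equivalence provided by the splitting bundle, so no further substantial argument is required.
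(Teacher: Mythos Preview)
Your proposal is correct and matches the paper's approach exactly: the paper simply states that this theorem follows by ``combining the last two sections,'' i.e., the localization equivalence restricted to the Springer fibre together with the Morita equivalence supplied by the Azumaya splitting bundle. You have in fact supplied more detail than the paper does, but the content and strategy are identical.
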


\section{Reduction Mod P}

Since our problem involves relating certain constructions over $\mathbb{C}$
with those over fields of positive characteristic, we shall have to
give a construction of all our objects over a ring $A$ which is finitely
generated over $\mathbb{Z}$. We shall have to follow very closely
Premet's work \cite{key-22} on the modular $W$-algebras, since part
of our goals involve relating our constructions to his. 

We begin by recalling the sheaf $D_{h}(\lambda)(0)_{\mathbb{Z}}$,
which is a quantization of the scheme $T^{*}\mathcal{B}(\mathbb{Z})$.
By base extension, this yields a sheaf $D_{h}(\lambda)(0)_{A}$ for
an arbitrary ring $A$, which is a quantization of $T^{*}\mathcal{B}(A)$. 

At this point, we would like to ape the construction given in the
previous section, and define a quantization of $\tilde{S}_{\mathcal{N}}(A)$
via Hamiltonian reduction of the sheaf $D_{h}(\lambda)(0)_{A}$. This
will work, as long as we make a suitable choice of the ring $A$. 

As a first approximation to what we want, recall from the previous
section that all of the objects needed to define the finite $W$-algebra-
the enveloping algebra, the nilpotent subalgebra $m_{l}$, its associated
group $M_{l}$, are defined and free over a ring $A=\mathbb{Z}[S^{-1}]$
where $S$ is a finite set of primes which depends on the type of
$\mathfrak{g}$ and the choice of nilpotent element. Then we can define
an ideal sheaf of $D_{h}(\lambda)(0)_{A}$ 
\[
I=D_{h}(\lambda)(0)_{A}\cdot<m-\chi(m)|m\in\mathfrak{m}_{l,A}>
\]
 where an element $m\in m_{l,A}$ acts on $D_{h}(\lambda)(0)_{A}$
through the natural action of $\mathfrak{g}_{A}$ (c.f. the definition
of $D_{h}(\lambda)(0)_{A}$ given above). From here, the naive Hamiltonian
reduction can be defined as: 
\[
D_{h}(\lambda,\chi)(0)_{A}:=p_{*}\mathfrak{E}nd{}_{D_{h}(\lambda)(0)_{A}}(D_{h}(\lambda)(0)_{A}/I)
\]
 where $p:\mu^{-1}(\chi)\to\tilde{S}_{\mathcal{N}}$ is the morphism
of Hamiltonian reduction, as above. When $A=\mathbb{C}$ then this
is just the usual reduction of differential operators, defined, e.g.,
in \cite{key-17} and used in \cite{key-11}. Unfortunately, for an
arbitrary ring $A$, this object might not be flat over the variety
$\tilde{S}_{e,A}$, and hence is unsatisfactory for use in arguments
where we have to reduce mod $p$. 

To get around this problem, we shall use the technique of defining
an $A$-lattice of the sheaf $D_{h}(\lambda,\chi)_{\mathbb{C}}$ (for
suitable $A$) which will be flat by construction. For reasonable
rings $A$, this construction will agree with the naive one introduced
above. 

We start with the {}``global'' case, which is Premet's construction.

\subsection{Premet's Construction}

In this subsection, we give a slight variant of the construction of
Premet. We start with the finite $W$-algebra over $\mathbb{C}$,
$U(\mathfrak{g},e)$. We know that $U(\mathfrak{g},e)$ is a filtered
algebra, and it satisfies $\mbox{gr}(U(\mathfrak{g},e))\tilde{=}O(S)$.
Let's consider the Rees algebra of $U(\mathfrak{g},e)$ with respect
to this filtration, which we shall denote $U_{h}(\mathfrak{g},e)$
(this is uncompleted with respect to $h$). Then we have that $U_{h}(\mathfrak{g},e)/h\tilde{=}O(S)$. 

So, we choose homogeneous elements $\{X_{i}\}$ which generate $U_{h}(\mathfrak{g},e)$
as a $\mathbb{C}[h]$-module and whose images mod $h$ form a homogeneous
algebraically independent generating set for $O(S)$. The relations
for the elements $X_{i}$ involve finitely many complex numbers. Therefore,
we can choose a ring $A$, finitely generated over $\mathbb{Z}$,
which contains all constants for these relations. Thus we define a
ring $U_{h,A}(\mathfrak{g},e)$, which is a finitely generated graded
$A[h]$-algebra and which satisfies $U_{h,A}(\mathfrak{g},e)\otimes_{A}\mathbb{C}\tilde{=}U_{h}(\mathfrak{g},e)$.
Then this ring is clearly a free $A[h]$-module by construction, since
$U_{h}(\mathfrak{g},e)$ is a free $\mathbb{C}[h]$-module. 

By possibly making a finite extension of $A$, we can also demand
something more. Recall that there is an injection of algebras $\mathbb{C}[\mathfrak{h}^{*}]^{W}\to U_{h}(\mathfrak{g},e)$
via the action of the center $Z(U(\mathfrak{g}))$ and the Harish-Chandra
isomorphism. We choose the ring $A$ so that there is an injection
$A[\mathfrak{h}^{*}]^{W}\to U_{h,A}(\mathfrak{g},e)$, which, when
base-changed to $\mathbb{C}$, becomes the action of the center. 

Next, we define the algebra $U_{h}(\mathfrak{g},e)_{A}$ as the {}``naive''
Hamiltonian reduction $(U_{h}(\mathfrak{g}_{A})/I)^{M_{l,A}}$ as
in the previous section. Then we have the standard realization 
\[
U_{h}(\mathfrak{g},e)_{A}\tilde{=}End_{U_{h}(\mathfrak{g}_{A})}(Q_{h,\chi}(A))
\]
 where $Q_{h,\chi}(A)=U_{h}(\mathfrak{g}_{A})/I$ as a left $U_{h}(\mathfrak{g}_{A})$-module.
Then $Q_{h,\chi}(A)$ is an $A$-lattice in $Q_{h,\chi}(\mathbb{C})$.
So, enlarging $A$ if necessary, we may assume that each of the $X_{i}$
preserve $Q_{h,\chi}(A)$. This yields a map $U_{h,A}(\mathfrak{g},e)\to U_{h}(\mathfrak{g},e)_{A}$. 

Then, just as in \cite{key-22}, page 7, we have the
\begin{claim}
The map $U_{h,A}(\mathfrak{g},e)\to U_{h}(\mathfrak{g},e)_{A}$ is
an isomorphism. 
\end{claim}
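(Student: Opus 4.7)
The plan is to follow Premet's strategy from \cite{key-22}: identify both sides as free graded $A[h]$-modules with compatible PBW-type bases, and then use that the map $\phi:U_{h,A}(\mathfrak{g},e)\to U_h(\mathfrak{g},e)_A$ becomes an isomorphism after $\otimes_A\mathbb{C}$ in order to descend the isomorphism to $A$.

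First I would verify that $U_{h,A}(\mathfrak{g},e)$ is by construction a free graded $A[h]$-module with basis the ordered monomials in the chosen generators $X_i$. This is forced by the set-up: the $X_i$ satisfy relations lifted from $\mathbb{C}$ whose leading terms mod $h$ form an algebraically independent generating set of $O(S)_\mathbb{C}$, and after enlarging $A$ to contain all the (finitely many) structure constants that appear in these relations, the same ordered monomials form a free $A[h]$-basis. Moreover, by construction $\phi$ is a homomorphism of graded $A[h]$-algebras, and after base change to $\mathbb{C}$ it is the standard identification of the Rees algebra of the W-algebra with its Hamiltonian reduction.

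Second, and this is where the main obstacle lies, I would show that the naive Hamiltonian reduction $U_h(\mathfrak{g},e)_A=(U_h(\mathfrak{g}_A)/I)^{M_{l,A}}$ is also $A[h]$-free and that its associated graded mod $h$ equals $O(S)_A$. The point is that the formation of $M_{l,A}$-invariants must commute with base change $A\to\mathbb{C}$: for suitable $A$ the group scheme $M_{l,A}$ is split unipotent with $A$-free coordinate ring, so invariants of a flat $A$-module can be written as the equalizer of two maps of flat $A$-modules, hence remain flat and are compatible with base change. Combined with the freeness of $Q_{h,\chi}(A)=U_h(\mathfrak{g}_A)/I$ over $A[h]$ (via PBW applied to an ordered basis in which a Lagrangian complement of $\mathfrak{m}_{l,A}$ comes last), this produces a PBW-type basis for $U_h(\mathfrak{g},e)_A$ over $A[h]$ and identifies its associated graded mod $h$ with the coordinate ring of the Slodowy slice over $A$, exactly as in the Gan--Ginzburg argument in characteristic zero.

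Finally, $\phi$ is a graded map of $A[h]$-flat algebras. Its reduction mod $h$ is a graded $A$-algebra map whose source is the polynomial ring $A[X_i]$ and whose target has been identified with $O(S)_A$; since $\phi$ sends each $X_i$ to an element whose leading symbol is the corresponding polynomial generator, the mod-$h$ map is an isomorphism of graded $A$-algebras. An $h$-adic/graded Nakayama argument, using that both sides are $A[h]$-flat together with the known isomorphism $\phi_\mathbb{C}$, then lifts the mod-$h$ isomorphism to an isomorphism of $A[h]$-algebras, establishing the claim.
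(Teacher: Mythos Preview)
Your proposal is aligned with the paper, which does not give its own argument but simply cites Premet \cite{key-22}, page 7; your outline is a reasonable reconstruction of that argument. One point needs tightening: the assertion that invariants remain $A$-flat ``because they are an equalizer of maps of flat $A$-modules'' is not valid as stated (kernels of maps between flat modules need not be flat, nor need they commute with base change). The correct mechanism, which you in fact invoke a few lines later, is the Gan--Ginzburg PBW decomposition: for suitable $A$ the module $Q_{h,\chi}(A)$ is free over $A[h]$ with an ordered basis in which the $\mathfrak{m}_{l,A}$-variables split off, so that $(Q_{h,\chi}(A))^{M_{l,A}}$ is visibly $A[h]$-free with the expected basis and its formation commutes with $\otimes_A\mathbb{C}$. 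With that in place, your graded Nakayama step goes through exactly as written.
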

This shows, incidentally, that the construction of this algebra was
independant of the various choices (at least if $A$ is large enough).
From this it easily follows that for any base change to a field $A\to k$,
we have $U_{h,,A}(\mathfrak{g},e)\otimes_{A}k\tilde{=}U_{h}(\mathfrak{g},e)_{k}$. 

Now we shall generalize all this to the local case. 

To do this, we momentarily work over $\mathbb{C}$ again: we have
by section 6.4 above that the quantization $D_{h}(\lambda,\chi)(0)_{\mathbb{C}}$
was the restriction of a sheaf of algebras $D_{h}^{\lambda,\chi}(\mathbb{C})$
on the scheme $\tilde{S}_{\mathcal{N}}\times\mathbb{A}_{\mathbb{C}}^{1}$,
which of course satisfies 
\[
D_{h}^{\lambda,\chi}(\mathbb{C})/hD_{h}^{\lambda,\chi}(\mathbb{C})\tilde{=}O_{\tilde{S}_{\mathcal{N}}}
\]
It is well-known that the scheme $\tilde{S}_{\mathcal{N}}$ is defined
and flat over $A$ for suitable $A$ (in particular $\mathbb{Z}[S^{-1}]$
where $S$ is a finite set of primes will suffice) . For a given finite
open affine cover of $\tilde{S}_{\mathcal{N},A}$, denoted $U_{i}$,
we can choose, consistently with the cover, generators of $O(\tilde{S}_{\mathcal{N},A},U_{i})$
(as $A$-algebras), and then choose (consistently with the cover)
lifts of these to $D_{h}^{\lambda,\chi}(\mathbb{C})(U_{i})$. 

Now we proceed exactly as above: we regard these elements as living
inside $p_{*}\mathfrak{E}nd_{D_{h}^{\lambda}}(D_{h}^{\lambda}/I)$,
we extend $A$ so that they preserve the subspace $(D_{h}^{\lambda}(A)/I_{A})$.
Because $D_{h}^{\lambda}(A)/I_{A}$ is finitely generated over $D_{h}^{\lambda}(A)$,
the resulting $A$ can be chosen to be finitely generated over $\mathbb{Z}$.
Then, we can look at the $A[h]$ algebra generated by these elements
inside each $D_{h}^{\lambda,\chi}(\mathbb{C})(U_{i})$. Because of
the consistency conditions specified above, these glue together to
form a sheaf on $\tilde{S}_{\mathcal{N},A}$ which will be denoted
$D_{h,A}^{\lambda,\chi}$. It is clearly a subsheaf of $D_{h}^{\lambda,\chi}(A)$.
In addition, since $D_{h}^{\lambda,\chi}(\mathbb{C})$ is a free finitely
generated $\mathbb{C}[h]$-algebra, we clearly have that $D_{h,A}^{\lambda,\chi}$
is free and finitely generated over $A[h]$. By the construction we
see that 
\[
D_{h,A}^{\lambda,\chi}/hD_{h,A}^{\lambda,\chi}\tilde{=}O(\tilde{S}_{\mathcal{N},A})
\]
and 
\[
D_{h,A}^{\lambda,\chi}\otimes_{A}\mathbb{C}\tilde{=}D_{h}^{\lambda,\chi}(\mathbb{C})
\]

In a completely parallel fashion, we can define the sheaf $\tilde{D}_{h,A}(\chi)$
as a sheaf of algebras on the variety $\tilde{S}_{A}$. We choose
$A$ so that there is an injection of algebras $A[\mathfrak{h}^{*}]\to\Gamma(\tilde{D}_{h,A}(\chi))$,
which base changes to the corresponding map over $\mathbb{C}$. 

So, we can argue exactly as above to show 
\begin{claim}
Let $k$ be any algebraically closed field of positive characteristic
such that there is a morphism $A\to k$. Then we have an isomorphism
\[
D_{h,A}^{\lambda,\chi}\otimes_{A}k\tilde{=}D_{h}^{\lambda,\chi}(k)
\]
 there is also an isomorphism 
\[
\tilde{D}_{h,A}(\chi)\otimes_{A}k\tilde{=}\tilde{D}_{h,k}(\chi)
\]

\end{claim}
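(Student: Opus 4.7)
The plan is to imitate the strategy of the preceding global claim (Premet's argument for $U_{h,A}(\mathfrak{g},e)$), carrying it out sheaf-theoretically on the chosen affine cover $\{U_i\}$ of $\tilde{S}_{\mathcal{N},A}$. On each $U_i$, by construction, $D_{h,A}^{\lambda,\chi}(U_i)$ is the $A[h]$-subalgebra of $D_h^{\lambda,\chi}(\mathbb{C})(U_i)$ generated by our chosen lifts $\{X_j^{(i)}\}$ of a generating set of $O(\tilde{S}_{\mathcal{N},A})(U_i)$, and these generators were arranged to preserve the $A$-lattice $D_h^{\lambda}(A)/I_A$ inside $D_h^{\lambda}(\mathbb{C})/I_{\mathbb{C}}$. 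Hence the same elements give a well-defined map
\[
D_{h,A}^{\lambda,\chi}(U_i) \longrightarrow p_*\mathfrak{E}nd_{D_h^\lambda(A)}\bigl(D_h^\lambda(A)/I_A\bigr)(U_i),
\]
and applying $\otimes_A k$ produces a comparison map
\[
D_{h,A}^{\lambda,\chi}(U_i) \otimes_A k \longrightarrow D_h^{\lambda,\chi}(k)(U_i),
\]
since the right-hand side is by definition the naive Hamiltonian reduction of $D_h^{\lambda}(k)$.

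Next I would verify that this comparison map is an isomorphism by reducing modulo $h$. On the left, because $D_{h,A}^{\lambda,\chi}$ is $A[h]$-free by construction, we have
\[
\bigl(D_{h,A}^{\lambda,\chi}/hD_{h,A}^{\lambda,\chi}\bigr) \otimes_A k \cong O(\tilde{S}_{\mathcal{N},A})(U_i) \otimes_A k \cong O(\tilde{S}_{\mathcal{N},k})(U_i).
\]
On the right, the quantization property of $D_h^{\lambda,\chi}(k)$ established via the Frobenius-constant quantization machinery in Section 5 gives the same identification. By unraveling the definitions one sees the two identifications agree via our comparison map, so the map is an isomorphism modulo $h$. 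A standard $h$-adic (graded Nakayama) argument, using that both sides are $k[h]$-flat and $h$-adically separated, upgrades this to an isomorphism of $k[h]$-algebras on each $U_i$. Because the generators $\{X_j^{(i)}\}$ were chosen consistently across the cover, these local isomorphisms glue to the desired global isomorphism of sheaves on $\tilde{S}_{\mathcal{N},k}$. The argument for $\tilde{D}_{h,A}(\chi)$ is identical, working over the cover of $\tilde{S}_A$ and invoking the additional injection $A[\mathfrak{h}^{*}] \hookrightarrow \Gamma(\tilde{D}_{h,A}(\chi))$ arranged in the construction.

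The main obstacle is not conceptual but bookkeeping: one must ensure that $A$ has been enlarged enough so that (a) all structure constants for the relations among the $\{X_j^{(i)}\}$ are in $A$, (b) the action of each $X_j^{(i)}$ really preserves $D_h^{\lambda}(A)/I_A$ rather than merely $D_h^{\lambda}(\mathbb{C})/I_{\mathbb{C}}$, and (c) the naive reduction over $A$ is flat over $A$ on each $U_i$ so that $\otimes_A k$ commutes with the $\mathfrak{E}nd$ appearing in the definition of $D_h^{\lambda,\chi}(k)$. Since $D_h^{\lambda}(A)/I_A$ is finitely generated over $D_h^{\lambda}(A)$ and the cover is finite, only finitely many conditions need to be imposed, so after a further finite localization of $A$ all three items hold simultaneously. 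With $A$ so chosen, the $k[h]$-ranks on both sides of the comparison map coincide (as both descend from the $\mathbb{C}[h]$-rank of $D_h^{\lambda,\chi}(\mathbb{C})(U_i)$), and the mod-$h$ isomorphism together with flatness then forces the comparison map itself to be an isomorphism.
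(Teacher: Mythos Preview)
Your proposal is correct and follows essentially the same approach the paper intends. The paper's own proof is just the phrase ``we can argue exactly as above,'' pointing back to Premet's argument for the global claim that $U_{h,A}(\mathfrak{g},e)\to U_h(\mathfrak{g},e)_A$ is an isomorphism; your write-up is a faithful sheaf-theoretic unpacking of that same argument on the affine cover, using the mod-$h$ identification with $O(\tilde{S}_{\mathcal{N}})$ and a graded Nakayama step. The only minor difference is order of operations: the paper implicitly shows the comparison is an isomorphism over $A$ (after enlarging $A$) and then base-changes, whereas you base-change first and prove the isomorphism over $k$ using the $k[h]$-flatness of $D_h^{\lambda,\chi}(k)$ already established in Section~5; both routes rest on the same ingredients and neither buys anything the other does not.
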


\subsection{Global Sections}

In this section, we shall prove the unproved assertions of the previous
chapter. Above we have defined algebras $U_{h,A}(\mathfrak{g},e)$. 
\begin{defn}
The algebra $U(\mathfrak{g}_{A},e)$ is defined to be $U_{h,A}(\mathfrak{g},e)/(h-1)$.
By the discussion in the previous section, this agrees with the definition
given in \cite{key-22}. 
\end{defn}
We wish to compare these algebras to the global sections of the {}``local''
versions that we also introduced. This is a straightforward matter
given the discussion above- we simply choose $A$ large enough so
that there are isomorphisms $\Gamma(D_{h}^{\lambda}(A)/I_{A})\tilde{=}U^{\lambda}(\mathfrak{g}_{A})/I_{A}$
and $\Gamma(\tilde{D}_{h}(A)/I_{A})\tilde{=}U(\mathfrak{g}_{A})\otimes_{A[\mathfrak{h}^{*}]^{W}}A[\mathfrak{h}^{*}]/I_{A}$.
Then, since everything is defined as operators on these spaces, we
immediately deduce isomorphisms of naive Hamiltonian reductions 
\[
\Gamma(\tilde{D}_{h}(\chi)(A))\tilde{=}U_{h}(\mathfrak{g},e)_{A}\otimes_{A[\mathfrak{h}^{*}]^{W}}A[\mathfrak{h}^{*}]
\]
and 
\[
\Gamma(D_{h}^{\lambda,\chi}(A))\tilde{=}U_{h}(\mathfrak{g},e)_{A}\otimes_{A[\mathfrak{h}^{*}]^{W}}A_{\lambda}:=U_{h}^{\lambda}(\mathfrak{g},e)_{A}
\]
where by $A_{\lambda}$ we mean the $A[\mathfrak{h}^{*}]^{W}$- module
corresponding to the maximal ideal generated by the integral weight
$\lambda$. By choosing appropriate bases, we then get isomorphisms
of our flat quantizations 
\[
\Gamma(\tilde{D}_{h,A}(\chi))\tilde{=}U_{h,A}(\mathfrak{g},e)\otimes_{A[\mathfrak{h}^{*}]^{W}}A[\mathfrak{h}^{*}]
\]
 and 
\[
\Gamma(D_{h,A}^{\lambda,\chi})\tilde{=}U_{h,A}(\mathfrak{g},e)\otimes_{A[\mathfrak{h}^{*}]^{W}}A_{\lambda}:=U_{h,A}^{\lambda}(\mathfrak{g},e)
\]

By base-change to $k$, we see that these isomorphisms yield immediately
\prettyref{prop:Glob-Secs} in the previous section.

\subsection{Localization over $A$ }

In this subsection, we shall describe a localization functor which
lives over the ring $A$. In particular, the isomorphisms of the previous
section allow us to define 
\[
\mathcal{L}_{A}^{\lambda}:D^{b}(mod^{f.g.}(U_{h,A}(\mathfrak{g},e)^{\lambda}))\to D^{b}(mod^{coh}(D_{h,A}^{\lambda,\chi}))
\]
 via 
\[
\mathcal{L}_{A}^{\lambda}(M)=M\otimes_{U_{h,A}(\mathfrak{g},e)^{\lambda}}^{L}D_{h,A}^{\lambda,\chi}
\]
\footnote{It might not be immediately obvious that this functor lands in the
bounded derived category. But we can argue as in the previous section
(reducing to the whole ring $U_{h,A}(\mathfrak{g},e)$) to see that
it does.%
}For the application we have in mind, we will start with an $M\in Mod^{f.d.}(U^{\lambda}(\mathfrak{g},e)_{\mathbb{C}})$.
We choose a good filtration $F$ on $M$, and using it we arrive at
a module $\mbox{Rees}(M_{A})$ over the ring $U_{h,A}^{\lambda}(\mathfrak{g},e)$,
and hence a complex of sheaves, its localization, which by abuse of
notation we denote $\mathcal{L}_{A}^{\lambda}(M_{A})$. 

Now, from the definitions we have a compatibility 
\[
\widehat{\mathcal{L}_{A}^{\lambda}(M_{A})\otimes_{A}\mathbb{C}}\tilde{=}\mbox{Loc}(M;F)
\]
 our original localization sheaf. From the result of section 3.1,
we deduce that the complex $\mathcal{L}_{A}^{\lambda}(M_{A})\otimes_{A}\mathbb{C}$
is actually concentrated in degree zero (i.e., is a sheaf). Since,
locally on $\tilde{S}_{\mathcal{N}}$, $\mathcal{L}_{A}^{\lambda}(M_{A})$
is a finite module over an algebra which is finitely generated over
$A$, we deduce that after making another (finite) exension of $A$,
the complex $\mathcal{L}_{A}^{\lambda}(M_{A})$ is in fact a sheaf
as well, which is flat as a module over $A$ and $h$. 

From here, we can define the sheaf $CS(M_{A}):=\mathcal{L}_{A}^{\lambda}(M_{A})/h$,
which is a coherent sheaf on $\tilde{S}_{\mathcal{N},A}$. In particular,
we have a class $[CS(M_{A})]\in K(\tilde{S}_{A})$, whose base change
to $\mathbb{C}$ is the class $[CS(M)]$. 

We can also take the base change to any algebraically closed field
$k$ (of positive characteristic); taking the quotient of this by
$h-1$ then yields the localization functor $\mathcal{L}_{k}^{\lambda}(M_{k})$
discussed above (we note that taking the quotient by $h-1$ is actually
an equivalence of categories in positive characteristic by \cite{key-3},
lemma 3.4). From this we deduce immediately the following compatibility:
$CS(\mathcal{L}^{\lambda}(M_{k}))=\mathcal{L}_{A}^{\lambda}(M_{A})\otimes_{A}k/(h)$
where $CS$ denotes the sheaf we get after taking associated graded
with respect to the induced filtration (as a $D^{\lambda,\chi}$-module).

We wish to analyze the support of the sheaf $\mathcal{L}_{A}^{\lambda}(M_{A})\otimes_{A}k$.
To this end, we have the 
\begin{prop}
The support of $(\mathcal{L}_{A}^{\lambda}(M_{A})\otimes_{A}k)/h$,
as a closed subset of $\tilde{S}_{k}$, is simply the image under
$Fr$ of the support of $\mathcal{L}^{\lambda}(M_{k})$ in $\tilde{S}_{k}^{(1)}$.
In fact, we can even say that $Fr_{*}[CS(\mathcal{L}^{\lambda}(M_{k})]=[\mathcal{L}^{\lambda}(M_{k})]$
in $K(\tilde{S}_{k}^{(1)})$.%
\footnote{Here we are simply regarding the sheaf $\mathcal{L}^{\lambda}(M_{k})$
as a coherent sheaf on $\tilde{S}^{(1)}$in the naive way; we are
not invoking any Azumaya splitting. %
}\end{prop}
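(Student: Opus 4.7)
The plan is to reinterpret $\mathcal{M} := \mathcal{L}^\lambda_A(M_A) \otimes_A k$ as a single coherent, $\mathbb{G}_m$-equivariant, $k[h]$-flat sheaf on $\tilde{S}_{\mathcal{N},k}^{(1)} \times \mathbb{A}^1$, and then read both conclusions off the fibers at $h=0$ and $h=1$. This reinterpretation combines the Frobenius-constant quantization structure established earlier (by which $D_{h,k}^{\lambda,\chi}$ is coherent on $\tilde{S}_{\mathcal{N},k}^{(1)} \times \mathbb{A}^1$, not only over the $h$-completion) with the positive-weight $\mathbb{G}_m$-equivariance lemma already used to remove $h$-adic completion. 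Flatness over $k[h]$ is inherited from the $A[h]$-flat construction of $\mathcal{L}_A^\lambda(M_A)$ in the preceding subsection, and $\mathbb{G}_m$-equivariance comes from the good filtration $F$ on $M$ (which makes $\mathrm{Rees}(M_A;F)$ equivariant) together with the manifest $\mathbb{G}_m$-equivariance of $D_{h,A}^{\lambda,\chi}$ and $U_{h,A}^\lambda(\mathfrak{g},e)$.

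Given this, I identify the two relevant fibers. Restricting $\mathcal{M}$ to $h=1$, one obtains $\mathcal{L}^\lambda(M_k)$ viewed as an $O_{\tilde{S}^{(1)}_{\mathcal{N},k}}$-module (that is, forgetting its $D^{\lambda,\chi}$-module structure). Restricting to $h=0$, the quotient $\mathcal{M}/h$ is naturally a module over $D_{h,k}^{\lambda,\chi}/h = O(\tilde{S}_{\mathcal{N},k})$, namely $CS(\mathcal{L}^\lambda(M_k))$; but as a coherent sheaf on $\tilde{S}^{(1)}_{\mathcal{N},k}$ — its natural home in the Frobenius-constant picture — it equals the pushforward $Fr_* CS(\mathcal{L}^\lambda(M_k))$. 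The $K$-theoretic statement now follows from $\mathbb{A}^1$-homotopy invariance: pullback $K(\tilde{S}^{(1)}_{\mathcal{N},k}) \to K(\tilde{S}^{(1)}_{\mathcal{N},k} \times \mathbb{A}^1)$ is an isomorphism and both restrictions $i_0^*, i_1^*$ are left inverses to it, so $[i_0^*\mathcal{M}] = [i_1^*\mathcal{M}]$ in $K(\tilde{S}^{(1)}_{\mathcal{N},k})$, which unpacks to $Fr_*[CS(\mathcal{L}^\lambda(M_k))] = [\mathcal{L}^\lambda(M_k)]$.

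For the set-theoretic support claim, let $Y := \mathrm{supp}(\mathcal{L}^\lambda(M_k)) \subseteq \mathcal{B}_\chi^{(1)}$; the containment in the Springer fibre uses the finite-dimensionality of $M$ (hence of $M_k$) together with the Azumaya localization equivalence of the previous section. Because $\mathbb{G}_m$ is connected and $\mathcal{B}_\chi^{(1)}$ has only finitely many irreducible components, the Gan--Ginzburg action must preserve each component as a set; in particular any union of components — in particular $Y$ — is $\mathbb{G}_m$-stable as a subset. Combining $\mathbb{G}_m$-equivariance of $\mathcal{M}$ with flatness over $k[h]$ (which forbids any irreducible component of $\mathrm{supp}(\mathcal{M})$ from lying inside a single fiber $\tilde{S}^{(1)}_{\mathcal{N},k} \times \{c\}$), we see that $\mathrm{supp}(\mathcal{M}|_{h=c}) = Y$ for every $c \in \mathbb{G}_m$, and hence $\mathrm{supp}(\mathcal{M}) = Y \times \mathbb{A}^1$. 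Intersecting with $h=0$ gives $\mathrm{supp}(\mathcal{M}|_{h=0}) = Y$ as well, which via the universal homeomorphism $Fr$ is exactly the asserted equality of closed subsets of $\tilde{S}_{\mathcal{N},k}$ and $\tilde{S}^{(1)}_{\mathcal{N},k}$.

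The main technical obstacle sits in the first step: combining the Frobenius-constant quantization with the $\mathbb{G}_m$-equivariance to realize $\mathcal{M}$ as a flat family on $\tilde{S}^{(1)}_{\mathcal{N},k} \times \mathbb{A}^1$ — the whole point of introducing the algebras $D_{h,A}^{\lambda,\chi}$ over $A[h]$ (rather than over $A[[h]]$) was to make this possible. Once that picture is in place, both statements are formal consequences of the properties of $\mathbb{G}_m$-equivariant $k[h]$-flat families together with $\mathbb{A}^1$-homotopy invariance of $K$-theory.
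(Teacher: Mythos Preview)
Your $K$-theoretic argument is essentially the paper's: both realize the Rees module as an $h$-flat sheaf on $\tilde{S}_{\mathcal{N},k}^{(1)}\times\mathbb{A}^{1}$ and invoke $\mathbb{A}^{1}$-homotopy invariance (the paper calls it ``rational invariance of $K$-theory'') to identify the classes of the fibres at $h=0$ and $h=1$.

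For the support statement, however, your route diverges from the paper's and contains a gap. You assert that $Y=\mathrm{supp}(\mathcal{L}^{\lambda}(M_{k}))$ is a union of irreducible components of $\mathcal{B}_{\chi}^{(1)}$, and hence $\mathbb{G}_{m}$-stable. But you have only shown $Y\subseteq\mathcal{B}_{\chi}^{(1)}$; nothing forces the support of this coherent sheaf to be a union of components. Moreover, the Gan--Ginzburg action restricted to $\mathcal{B}_{\chi}$ is the action of the one-parameter subgroup $\gamma(t)\subset G$, which is not trivial in general, so an arbitrary closed subset of $\mathcal{B}_{\chi}^{(1)}$ need not be $\mathbb{G}_{m}$-stable. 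Without $\mathbb{G}_{m}$-stability of $Y$, equivariance of $\mathcal{M}$ only tells you that the fibre of $\mathrm{supp}(\mathcal{M})$ over $h=t^{2}$ is $t\cdot Y$, not $Y$ itself; you then cannot conclude $\mathrm{supp}(\mathcal{M})=Y\times\mathbb{A}^{1}$.

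The paper bypasses this entirely. It observes that the $O(\tilde{S}_{k}^{(1)})$-module structure on $\mathcal{L}^{\lambda}(M_{k})$ is obtained via the central embedding $F:O(\tilde{S}_{k}^{(1)})\to D^{\lambda,\chi}$, and that this $F$ becomes the Frobenius $O(\tilde{S}_{k}^{(1)})\to O(\tilde{S}_{k})$ upon passing to the associated graded (this is exactly what ``Frobenius-constant'' means). Since for a good filtration the set-theoretic support computed from a central subalgebra is unchanged by passing to $\mathrm{gr}$, the $O(\tilde{S}_{k}^{(1)})$-support of $\mathcal{L}^{\lambda}(M_{k})$ agrees with that of $CS(\mathcal{L}^{\lambda}(M_{k}))$, and the latter is visibly the image under $Fr$ of the support in $\tilde{S}_{k}$. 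No $\mathbb{G}_{m}$-equivariance is needed for this half of the proposition.
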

\begin{proof}
This is essentially just a restatement of the fact that $D^{\lambda,\chi}$
is a Frobenius constant quantization. For this tells us that $F:O(\tilde{S}_{k}^{(1)})\to D^{\lambda,\chi}$
becomes the Frobenius morphism after taking the associated graded.
On the other hand, pulling back under $F$ is exactly how we arrive
at the support of $\mathcal{L}^{\lambda}(M_{k})$ as a sheaf on $\tilde{S}_{k}^{(1)}$.
The refined result follows from the rational invariance of $K$-theory.
We are comparing the classes of two coherent sheaves on $\tilde{S}^{(1)}$.
Both are obtained from the $h$-flat sheaf $\mbox{Rees}(\mathcal{L}_{k}^{\lambda}(M_{k}))$
on $\tilde{S}^{(1)}\times\mathbb{A}^{1}$, the first by restriction
to $h=0$, the second by restriction to $h=1$. 
\end{proof}
Now, since $M$ is a finite dimensional module over $U^{\lambda}(\mathfrak{g},e)$,
its support over $S$ is simply the point $\chi$. After reduction
mod $p$, for $p$ sufficiently large, this implies that $M_{k}$
is a module in $mod_{\chi}(U^{\lambda}(\mathfrak{g},e))$- simply
because the map $O(S_{k}^{(1)})\to U(\mathfrak{g},e)_{k}$ becomes
the frobenius morphism after taking gr (as in the proposition). 

So, combining this discussion with the theorems in the previous section,
we see that $\mathcal{L}_{A}^{\lambda}(M_{A})/(h)$ is supported,
set theoretically, on $\mathcal{B}_{\chi}$- and hence the same is
true of the base change $\mathcal{L}_{\mathbb{C}}^{\lambda}(M)$.
So now we can state definitively our {}``base change'' lemma: 
\begin{lem}
The class $[CS(M_{A})]$ actually lives in $K(Coh_{\mathcal{B}_{\chi,A}}(\tilde{S}_{A})=K(\mathcal{B}_{\chi,A})$.
Its pullback to $K(\mathcal{B}_{\chi,k})$ induces the class $[CS(\mathcal{L}^{\lambda}(M_{k}))]$,
and the pullback to $\mathbb{C}$ induces $[CS(M)]$. 
\end{lem}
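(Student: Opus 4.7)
The proof is essentially a synthesis of the preceding discussion. The plan is to establish three things in order: (a) the support claim that $[CS(M_A)]$ lives in $K(\mathrm{Coh}_{\mathcal{B}_{\chi,A}}(\tilde{S}_A))$ (together with the identification of this group with $K(\mathcal{B}_{\chi,A})$), (b) the base-change compatibility to $k$, and (c) the base-change compatibility to $\mathbb{C}$.

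For (a), I would start from the coherent sheaf $\mathcal{L}_A^\lambda(M_A)/(h)$ on $\tilde{S}_A$, whose set-theoretic support is some closed subscheme $Z \subseteq \tilde{S}_A$ finitely presented over $A$. By the paragraph immediately preceding the lemma, the generic fiber $Z_{\mathbb{C}}$ equals $\mathcal{B}_\chi$: since $M$ is finite-dimensional, its support on $S$ is the point $\chi$, and the characteristic-zero localization puts everything on $\mathcal{B}_\chi$. For each closed point of $\mathrm{Spec}(A)$ of large enough residue characteristic, the previous proposition together with the observation that $M_k$ has $p$-character $\chi$ shows that $Z_k = \mathcal{B}_{\chi,k}$. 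Constructibility of support in finitely presented families, combined with a final shrinking of $A$, then forces $Z = \mathcal{B}_{\chi,A}$. The identification $K(\mathrm{Coh}_{\mathcal{B}_{\chi,A}}(\tilde{S}_A)) \cong K(\mathcal{B}_{\chi,A})$ now follows by a standard dévissage, since any coherent sheaf supported on the closed subscheme $\mathcal{B}_{\chi,A}$ admits a finite filtration whose subquotients are pushforwards from $\mathcal{B}_{\chi,A}$.

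For (b), I would invoke the identity $CS(\mathcal{L}^\lambda(M_k)) = \mathcal{L}_A^\lambda(M_A) \otimes_A k/(h)$ already recorded above, together with the flatness of $\mathcal{L}_A^\lambda(M_A)$ over $A$ and $h$ (which was arranged by a further extension of $A$ in the construction). This flatness guarantees that the base-change map $K(\mathcal{B}_{\chi,A}) \to K(\mathcal{B}_{\chi,k})$ sends the class $[CS(M_A)]$ to $[CS(\mathcal{L}^\lambda(M_k))]$ on the nose. For (c), the identity $\widehat{\mathcal{L}_A^\lambda(M_A) \otimes_A \mathbb{C}} \cong \mathrm{Loc}(M;F)$ is literally the construction, so again by flatness the base-change to $\mathbb{C}$ sends $[CS(M_A)]$ to $[CS(M)]$.

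The main obstacle is the support statement in (a): one must rule out the possibility that $Z$ is strictly larger than $\mathcal{B}_{\chi,A}$ while having generic and closed fibers equal to the expected Springer fibre. This is handled by the fact that $\tilde{S}_A$ is of finite type over $A$ so that support is a constructible condition, permitting one to invert finitely many more primes in $A$ to force the support to be $\mathcal{B}_{\chi,A}$ uniformly; the rest of the proof is then formal.
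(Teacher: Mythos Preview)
Your proposal is correct and follows the paper's approach: the lemma is presented there without a separate proof, as a summary of the discussion immediately preceding it, and your parts (b) and (c) simply unpack the base-change compatibilities already recorded. One minor difference in the logic of part (a): you treat the characteristic-zero support statement $Z_{\mathbb{C}}=\mathcal{B}_\chi$ as an independent input, but in the paper this is precisely what is being \emph{deduced} at this point---Section~3 explicitly defers the support claim for $CS(M;F)$ to Section~6, and the paper's route is to establish the support on closed fibers (via the proposition and the fact that $M_k$ has generalized $p$-character $\chi$), conclude the support over $A$, and only then read off the characteristic-zero statement. This is harmless for you, since your constructibility argument already goes through using only the closed-fiber input (as $\mathrm{Spec}(A)$ is Jacobson), but it is worth being aware that the generic-fiber step is output, not input, in the paper's ordering.
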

Since the {}``specialization morphism'' $K(\mathcal{B}_{\chi,\mathbb{C}})\to K(\mathcal{B}_{\chi,k})$
(c.f. \cite{key-5}, chapter 7) is an isomorphism (and the same is
true of the induced map $H_{*}(\mathcal{B}_{\chi,k})\to H_{*}(\mathcal{B}_{\chi,\mathbb{C}})$),
and the {}``reduction mod p'' map $K(mod^{f.d.}(U^{\lambda}(\mathfrak{g},e))\to K(mod_{\chi}^{f.g.}(U^{\lambda}(\mathfrak{g},e))$
is injective, see that we have reduced the injectivity problem to
the following 
\begin{thm}
The restriction of the morphism $K(mod_{\chi}^{f.g.}(U^{\lambda}(\mathfrak{g},e))\to K(\mathcal{B}_{\chi,k})\to H_{top}(\mathcal{B}_{\chi,k})$
to the image of $K(mod^{f.d.}(U^{\lambda}(\mathfrak{g},e))\to K(mod_{\chi}^{f.g.}(U^{\lambda}(\mathfrak{g},e))$
is injective. \end{thm}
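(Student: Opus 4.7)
The strategy is to reduce the stated injectivity, via the Azumaya splittings of the preceding section and Premet's Morita equivalence, to the analogous injectivity for the enveloping algebra in positive characteristic, which in turn can be deduced from the classical theory of characteristic cycles of holonomic $D$-modules in characteristic zero together with the compatibility of specialization.

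First, I would combine the localization equivalence for $U(\mathfrak{g},e)$ with the Azumaya splitting of $D^{\lambda,\chi}$ on the formal neighborhood of $\mathcal{B}_{\chi}^{(1)}$ to identify
\[
K(mod_{\chi}^{f.g.}(U^{\lambda}(\mathfrak{g},e))) \tilde{=} K(Coh_{\mathcal{B}_{\chi}^{(1)}}(\tilde{S}_{\mathcal{N}}^{(1)})) \tilde{=} K(\mathcal{B}_{\chi,k}),
\]
the last step by d\'{e}vissage. Under this identification, the composite map in the theorem becomes the top Chern character on $K(\mathcal{B}_{\chi,k})$.

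Second, I would apply Premet's Morita equivalence $U^{\lambda}(\mathfrak{g})_{\chi} \tilde{=} \mbox{Mat}_{p^{d(e)}}(U^{\lambda}(\mathfrak{g},e)_{\chi})$, which yields a $K$-theory isomorphism between $K(mod_{\chi}^{f.g.}(U^{\lambda}(\mathfrak{g},e)))$ and $K(mod_{\chi}^{f.g.}(U^{\lambda}(\mathfrak{g})))$, sending $[M_{k}]$ to $[V \otimes_{k} M_{k}]$. The parallel Bezrukavnikov-Mirkovic-Rumynin localization and Azumaya splitting on the Springer resolution identify this second $K$-group also with $K(\mathcal{B}_{\chi,k})$. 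The compatibility $i^{*}M_{\chi}^{\lambda} \tilde{=} E_{\chi}^{\lambda} \otimes V$ from the preceding section then guarantees that the two resulting identifications of the W-algebra $K$-group with $K(\mathcal{B}_{\chi,k})$ -- the direct one and the one via Morita -- agree up to the factor $\mbox{dim}(V) = p^{d(e)}$. Consequently, injectivity of the top Chern character on the image of the reduction-mod-$p$ map from characteristic zero, on the W-algebra side, is equivalent to the analogous injectivity on the enveloping algebra side.

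For the enveloping algebra, the characteristic-zero source of the reduction may be described, via Skryabin's equivalence applied to our finite-dimensional W-algebra modules, as $(\mathfrak{m}_{l},\chi)$-equivariant $U(\mathfrak{g})$-modules; applying Beilinson-Bernstein localization, these become holonomic $D$-modules on $\mathcal{B}$ with support in $\mu^{-1}(\chi) = \mathcal{B}_{\chi}$. The characteristic cycle map on the $K$-theory of such holonomic $D$-modules is injective by the classical theory \cite{key-15}, and combining this with the specialization isomorphisms for $K(\mathcal{B}_{\chi,\bullet})$ and $H_{*}(\mathcal{B}_{\chi,\bullet})$ yields the required positive-characteristic injectivity. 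The principal obstacle is the final matching: carefully tracking, through Skryabin's equivalence, Premet's Morita equivalence, both Azumaya splittings, and specialization, how the classical characteristic cycles correspond to the positive-characteristic top Chern characters on $K(\mathcal{B}_{\chi,k})$ -- in particular, accounting for the Frobenius twist and the factor $p^{d(e)}$ consistently, and verifying that the classical injectivity applies to precisely the subcategory of holonomic $D$-modules coming from Skryabin's equivalence.
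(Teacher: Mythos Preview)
Your proposal has a genuine gap, and it stems from a misidentification of the map being studied. The first arrow in the theorem, $K(mod_{\chi}^{f.g.}(U^{\lambda}(\mathfrak{g},e)))\to K(\mathcal{B}_{\chi,k})$, sends $M_k$ to the class of the associated graded sheaf $[CS(\mathcal{L}^{\lambda}(M_k))]$, i.e.\ the localization modulo $h$. Under the Azumaya-splitting identification $K(mod_{\chi}^{f.g.})\tilde{=}K(\mathcal{B}_{\chi,k})$, which sends $M_k$ to $[Coh(M_k)]$, this is \emph{not} the identity followed by top Chern: it differs from $[Coh(M_k)]$ by multiplication with the class of the splitting bundle $E_{\chi}^{\lambda}$ (and a Frobenius pushforward). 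The whole content of the paper's argument is to analyse this discrepancy. Concretely, one computes $[M_{\chi}^{\lambda}]=[((Fr_{\mathcal{B}})_*O_{\mathcal{B}})|_{\mathcal{B}_{\chi}^{(1)}}]$, deduces $[CS(\mathcal{L}^{\lambda}(M_k))]=p^{-d(e)}(Fr_{\mathcal{B}})^*[Coh(M_k)]$, and then observes that $(Fr_{\mathcal{B}})^*$ acts on $A_i(\mathcal{B}_{\chi})$ by $p^{\dim\mathcal{B}-i}$. Since both $[CS(\mathcal{L}^{\lambda}(M_k))]$ and $[Coh(M_k)]^{(-1)}$ are independent of $p$ for $p\gg 0$ (the first because it is reduced from a class over $A$, the second by \cite{key-6}), the resulting equation $(ch)_i[CS]=p^{\dim\mathcal{B}_{\chi}-i}(ch)_i[Coh]^{(-1)}$ forces all the lower Chern classes of $[CS]$ to vanish and the top one to equal the full Chern character of $[Coh(M_k)]^{(-1)}$, which is injective since $Coh$ comes from an equivalence. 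This ``vary the prime'' weight argument is the heart of the proof, and nothing in your outline replaces it.

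Your proposed substitute --- appealing to an injectivity of the classical characteristic cycle map for the holonomic $D$-modules arising via Skryabin and Beilinson--Bernstein --- does not work as stated. The top characteristic cycle of a holonomic $D$-module records only the multiplicities of the irreducible components of its singular support; it does not determine the $K$-class (different local systems of the same rank, for instance, have the same cycle). There is no general classical theorem giving the injectivity you invoke, and indeed the characteristic-zero $D$-module approach of \cite{key-12} yields only the dimension inequality $\dim K_{\mathbb{Q}}(mod^{f.d.})\le\dim H_{top}(\mathcal{B}_{\chi})$, not injectivity. The positive-characteristic Frobenius computation is precisely what upgrades this to an injection.
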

\begin{rem}
We should comment here that for all $p$ sufficiently large, there
are isomorphisms between the groups $K(\mathcal{B}_{\chi,k})$ obtained
via the comparison with characteristic zero. If we choose a ring $A$
which works simultaneously for all simple finite dimensional $U^{\lambda}(\mathfrak{g},e)$-modules,
then we see that we can choose any algebraically closed field of large
positive characteristic to show injectivity. We make such a choice
from now on.
\end{rem}
The proof of this theorem will occupy the next section.

\section{K-theory}

We shall need to recall a few facts from the algebraic $K$-theory
developed in \cite{key-15} and \cite{key-1}. In particular, recall
that if we have a proper scheme $Y$, and a closed embedding $Y\to X$,
where $X$ is smooth, then we have a {}``localized chern character''
map 
\[
ch_{Y}^{X}:K_{\mathbb{Q}}(Y)\to A_{\mathbb{Q}}(Y)
\]
 where $A_{\mathbb{Q}}(Y)$ denotes the rational chow ring of algebraic
cycles. The map is obtained by realizing $A(Y)$ as a ring of cycles
on $X$ which are supported on $Y$. This map has the following functorial
property: if $Z\to W$ is another inclusion of a proper into a smooth
variety, and $f:X\to W$ restricts to a morphism from $Y$ to $Z$,
then we have 
\[
f^{*}ch_{Z}^{W}=ch_{Y}^{X}f^{*}
\]
 There is also the {}``Riemann-Roch'' morphism $\tau:K_{\mathbb{Q}}(Y)\to A_{\mathbb{Q}}(Y)$;
this morphism respects proper pushforward. The two morphisms are of
course quite different, but in both cases the projection to the {}``top''
piece $A_{dim(Y)}(Y)$ yields the same map: the algebraic cycle map. 

As $A(Y)$ is a graded vector space, we shall denote by $(ch_{X}^{Y})_{i}$
and $\tau_{i}$ the maps obtained after projection to the degree $i$
cycles. 

All of the varieties we will consider below (namely, the springer
fibres) have the property that their cohomology is spanned by the
classes of algebraic cycles (c.f. the main results of \cite{key-10}).
Thus we have a degree doubling isomorphism 
\[
A_{\mathbb{Q}_{l}}(Y)\to H_{*}(Y)
\]
 where we must now consider the etale Borel-Moore homology group.
Given this, we shall mainly work with the groups $A_{\mathbb{Q}}(Y)$
from now on. From this condition on our varieties it also follows
that the morphisms $ch_{Y}^{X}$ and $\tau$ are isomorphisms. 

With all this in hand, we can proceed to the proof of theorem 29.
First of all, we have a functor $mod_{\chi}^{coh}(D^{\lambda,\chi})\to mod_{\chi}^{coh}(D^{\lambda})$,
given by $\mathcal{F}\to V\otimes\mathcal{F}$ (c.f. section $6.7$
for the vector space $V$ of dimension $p^{d(e)}$). 

Next, we recall the Azumaya splitting of section six. Let us denote
the coherent sheaf obtained from $M_{k}$ via this splitting $Coh(M_{k})$.
So we have that 
\[
E_{\chi}^{\lambda}\otimes Coh(M_{k})\tilde{=}\mathcal{L}^{\lambda}(M_{k})
\]
 and therefore (by section 5.5) that 
\[
M_{\chi}^{\lambda}\otimes Coh(M_{k})\tilde{=}V\otimes\mathcal{L}^{\lambda}(M_{k})
\]
 where $M_{\chi}^{\lambda}$ is the splitting bundle of \cite{key-5}
(note here that $Coh(M_{k})$ is scheme theoretically supported on
$\tilde{S}_{\mathcal{N}}$). But the class in $K$-theory of this
bundle has already been studied. If we make the normalization following
\cite{key-27}, then by section 6 of \cite{key-5}, we in fact have
\[
[M_{\chi}^{\lambda}]=[((Fr_{\mathcal{B}})_{*}O_{\mathcal{B}})|_{\mathcal{B}_{\chi}^{(1)}}]
\]
where we are now considering $\mathcal{B}_{\chi}$ as a subvariety
of $\mathcal{B}$ (i.e., the class on the right lives in $K_{\mathcal{B}_{\chi}^{(1)}}(\mathcal{B}^{(1)})$
which is isomorphic to $K(\mathcal{B}_{\chi}^{(1)})$). Combining
these equalities with push-pull now yields the following: 
\begin{lem}
We have the following equalities in $K(\mathcal{B}_{\chi}^{(1)})$:
\[
[\mathcal{L}^{\lambda}(M_{k})]=p^{-d(e)}[((Fr_{\mathcal{B}})_{*}O_{\mathcal{B}})|_{\mathcal{B}_{\chi}^{(1)}}][Coh(M_{k})]=p^{-d(e)}(Fr_{\mathcal{B}})_{*}(Fr_{\mathcal{B}})^{*}[Coh(M_{k})]
\]
 where we use the fact that the action of $Fr$ on $\mathcal{B}$
takes $\mathcal{B}_{\chi}$ to $\mathcal{B}_{\chi}^{(1)}$. 
\end{lem}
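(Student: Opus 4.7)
The plan is to combine the two Azumaya splittings recalled in section 6.7, Premet's matrix-algebra factorization that gave rise to the vector space $V$ of dimension $p^{d(e)}$, and the computation of $[M_{\chi}^{\lambda}]$ from \cite{key-5}, and then to finish with the projection formula for the Frobenius morphism $Fr_{\mathcal{B}}$.

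First I would take the two isomorphisms stated just before the lemma, namely $E_{\chi}^{\lambda}\otimes\mathrm{Coh}(M_{k})\cong\mathcal{L}^{\lambda}(M_{k})$ and $M_{\chi}^{\lambda}\otimes\mathrm{Coh}(M_{k})\cong V\otimes\mathcal{L}^{\lambda}(M_{k})$, and pass them to classes in $K(\mathcal{B}_{\chi}^{(1)})$. Since $\dim V=p^{d(e)}$, the second isomorphism rearranges to
\[
[\mathcal{L}^{\lambda}(M_{k})]=p^{-d(e)}\,[M_{\chi}^{\lambda}]\cdot[\mathrm{Coh}(M_{k})]
\]
in $K_{\mathbb{Q}}(\mathcal{B}_{\chi}^{(1)})$, where the product makes sense because $\mathrm{Coh}(M_{k})$ is scheme-theoretically supported on $\tilde{S}_{\mathcal{N}}$ and hence on $\mathcal{B}_{\chi}$, which maps to $\mathcal{B}_{\chi}^{(1)}$ under $Fr_{\mathcal{B}}$.

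Next I would insert the BMR formula: with the Mirkovi\'c--Rumynin normalization recorded in \cite{key-5}, section 6 (following \cite{key-27}), the splitting bundle $M_{\chi}^{\lambda}$ satisfies $[M_{\chi}^{\lambda}]=[((Fr_{\mathcal{B}})_{*}O_{\mathcal{B}})|_{\mathcal{B}_{\chi}^{(1)}}]$ in $K_{\mathcal{B}_{\chi}^{(1)}}(\mathcal{B}^{(1)})\cong K(\mathcal{B}_{\chi}^{(1)})$. Substituting yields the first displayed equality of the lemma directly.

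Finally, for the second equality I would apply the projection formula to the finite morphism $Fr_{\mathcal{B}}:\mathcal{B}\to\mathcal{B}^{(1)}$, which restricts to a morphism $\mathcal{B}_{\chi}\to\mathcal{B}_{\chi}^{(1)}$. The projection formula, together with $(Fr_{\mathcal{B}})_{*}[O_{\mathcal{B}}]=[(Fr_{\mathcal{B}})_{*}O_{\mathcal{B}}]$, gives
\[
[(Fr_{\mathcal{B}})_{*}O_{\mathcal{B}}]\cdot[\mathrm{Coh}(M_{k})]=(Fr_{\mathcal{B}})_{*}\bigl((Fr_{\mathcal{B}})^{*}[\mathrm{Coh}(M_{k})]\bigr)
\]
after restriction to $\mathcal{B}_{\chi}^{(1)}$, and multiplying by $p^{-d(e)}$ finishes the proof. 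There is no real obstacle here; the only point that requires some care is book-keeping the supports (checking that each class lives in the $K$-group indicated and that the product of classes in $K(\mathcal{B}_{\chi}^{(1)})$ is well-defined via the closed embedding into $K(\tilde{S}_{\mathcal{N}}^{(1)})$), and this is routine since $\mathrm{Coh}(M_{k})$ is already supported on $\mathcal{B}_{\chi}$ by the discussion at the end of section~7.
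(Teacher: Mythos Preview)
Your proposal is correct and follows exactly the same route as the paper: the paper assembles the identity $M_{\chi}^{\lambda}\otimes\mathrm{Coh}(M_{k})\cong V\otimes\mathcal{L}^{\lambda}(M_{k})$, substitutes the BMR formula $[M_{\chi}^{\lambda}]=[((Fr_{\mathcal{B}})_{*}O_{\mathcal{B}})|_{\mathcal{B}_{\chi}^{(1)}}]$, and then invokes ``push-pull'' (i.e., the projection formula for $Fr_{\mathcal{B}}$) to obtain the second equality. The only difference is that the paper states these ingredients in the paragraph preceding the lemma rather than in a separate proof environment.
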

Now, we would like to combine this information with our equality from
the previous section 
\[
Fr_{*}[CS(\mathcal{L}^{\lambda}(M_{k})]=[\mathcal{L}^{\lambda}(M_{k})]
\]
 In this equality however, we were considering the frobenius with
respect to scheme $\tilde{S}_{\mathcal{N}}^{(1)}$. However, the discrepancy
is rectified by the following 
\begin{claim}
$(Fr_{\tilde{S}_{\mathcal{N}}})_{*}[CS(\mathcal{L}^{\lambda}(M_{k})]$
and $(Fr_{\mathcal{B}})_{*}[CS(\mathcal{L}^{\lambda}(M_{k})]$ agree
as classes in $K(\mathcal{B}_{\chi}^{(1)})$. \end{claim}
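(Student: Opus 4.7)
The plan is to deduce the claim from naturality of the Frobenius morphism with respect to closed immersions, combined with Quillen's d\'evissage isomorphism for $K$-theory with supports.

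First, recall that $CS(\mathcal{L}^{\lambda}(M_{k}))$ is a coherent sheaf on $\tilde{S}_{\mathcal{N},k}$ which, by the results of the previous section, is set-theoretically supported on the Springer fibre $\mathcal{B}_{\chi}$. The variety $\mathcal{B}_{\chi}$ admits two natural closed embeddings: $i_{1}\colon \mathcal{B}_{\chi}\hookrightarrow \tilde{S}_{\mathcal{N}}$, the defining one, and $i_{2}\colon\mathcal{B}_{\chi}\hookrightarrow \mathcal{B}$, obtained by restricting the projection $\tilde{\mathcal{N}}\cong T^{*}\mathcal{B}\to\mathcal{B}$ (which is injective on $\mathcal{B}_{\chi}$ because the cotangent coordinate is constantly equal to $\chi$, hence a closed immersion by properness of $\mathcal{B}_{\chi}$). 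By Quillen's d\'evissage theorem, pushforward along each of these induces an isomorphism $K(\mathcal{B}_{\chi})\xrightarrow{\sim} K_{\mathcal{B}_{\chi}}(X)$ for $X=\tilde{S}_{\mathcal{N}}$ and $X=\mathcal{B}$ respectively (and analogously after Frobenius twist). Under these identifications the class $[CS(\mathcal{L}^{\lambda}(M_{k}))]$ corresponds to a single well-defined class $[\mathcal{G}]\in K(\mathcal{B}_{\chi})$, and the two pushforwards appearing in the claim are the results of transporting $[\mathcal{G}]$ into $K_{\mathcal{B}_{\chi}}(\tilde{S}_{\mathcal{N}})$, respectively $K_{\mathcal{B}_{\chi}}(\mathcal{B})$, and then applying the respective Frobenii.

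The key ingredient is the naturality of Frobenius: for any morphism $f\colon Y\to X$ of schemes over $\mathbb{F}_{p}$, one has $Fr_{X}\circ f=f^{(1)}\circ Fr_{Y}$. Applied to $i_{1}$ and $i_{2}$, this yields the identities $(Fr_{\tilde{S}_{\mathcal{N}}})_{*}(i_{1})_{*}=(i_{1}^{(1)})_{*}(Fr_{\mathcal{B}_{\chi}})_{*}$ and $(Fr_{\mathcal{B}})_{*}(i_{2})_{*}=(i_{2}^{(1)})_{*}(Fr_{\mathcal{B}_{\chi}})_{*}$ at the level of $K$-groups. Applied to $[\mathcal{G}]$, each produces a class supported on $\mathcal{B}_{\chi}^{(1)}$; under the analogous d\'evissage isomorphisms in the twisted setting, both classes become equal to $(Fr_{\mathcal{B}_{\chi}})_{*}[\mathcal{G}]\in K(\mathcal{B}_{\chi}^{(1)})$. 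This is exactly the equality asserted in the claim.

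The argument is essentially formal, and I expect no serious obstacle beyond a consistent bookkeeping of the two d\'evissage isomorphisms. The only point requiring minor care is verifying that $i_{2}$ really is a closed immersion and that, under the reduced scheme structure on $\mathcal{B}_{\chi}$, the two ways of realizing $\mathcal{B}_{\chi}$ (inside $\tilde{S}_{\mathcal{N}}$ and inside $\mathcal{B}$) yield compatible d\'evissage identifications. Once this is established, the commutative squares expressing Frobenius naturality reduce the claim to the tautology $(Fr_{\mathcal{B}_{\chi}})_{*}[\mathcal{G}]=(Fr_{\mathcal{B}_{\chi}})_{*}[\mathcal{G}]$.
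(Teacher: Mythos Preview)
Your proposal is correct and follows essentially the same approach as the paper: the paper argues by taking a finite filtration of $CS(\mathcal{L}^{\lambda}(M_{k}))$ by sheaves scheme-theoretically supported on $\mathcal{B}_{\chi}$ (i.e., d\'evissage), and then observes that both $Fr_{\tilde{S}_{\mathcal{N}}}$ and $Fr_{\mathcal{B}}$ restrict to $Fr_{\mathcal{B}_{\chi}}$ on such sheaves (i.e., Frobenius naturality). Your version is simply a more formally packaged account of the same two steps.
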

\begin{proof}
As $CS(\mathcal{L}^{\lambda}(M_{k}))$ is set-theoretically supported
on $\mathcal{B}_{\chi}$, it has a finite filtration by sheaves which
are scheme-theoretically supported there. This filtration means that
we can write the class $[CS(\mathcal{L}^{\lambda}(M_{k})]$ as a sum
of classes $[A_{i}]$ of sheaves on the scheme $\mathcal{B}_{\chi}$. 

However, the proof for sheaves of this type is simply an examination
of the definition of the frobenius morphism, which makes it clear
that the restriction of $Fr_{\tilde{S}_{\mathcal{N}}}$ to $\mathcal{B}_{\chi}$
and $Fr_{\mathcal{B}}$ to $\mathcal{B}_{\chi}$ coincide- in fact
they are both $Fr_{\mathcal{B}_{\chi}}$. 
\end{proof}
So now we are free to compare the previous inequalities and deduce
that 
\[
(Fr_{\mathcal{B}})_{*}[CS(\mathcal{L}^{\lambda}(M_{k})]=p^{-d(e)}(Fr_{\mathcal{B}})_{*}(Fr_{\mathcal{B}})^{*}[Coh(M_{k})]
\]
 in $K(\mathcal{B}_{\chi}^{(1)})$. 

To prove the theorem, it remains to study the action of the operators
$(Fr_{\mathcal{B}})_{*}$ and $(Fr_{\mathcal{B}})^{*}$ on $K$-theory.
To start, we have 
\begin{prop}
The map $(Fr_{\mathcal{B}})_{*}:K(\mathcal{B}_{\chi})\to K(\mathcal{B}_{\chi}^{(1)})$
is injective. \end{prop}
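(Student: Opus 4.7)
The plan is to transport the question from rational $K$-theory to rational Chow groups via the Riemann--Roch transformation $\tau$ recalled at the start of this section, and then to verify injectivity there by a direct cycle-theoretic computation. Since the Springer fibre $\mathcal{B}_{\chi}$ is among the varieties whose cohomology is spanned by classes of algebraic cycles, $\tau:K_{\mathbb{Q}}(\mathcal{B}_{\chi})\to A_{\mathbb{Q}}(\mathcal{B}_{\chi})$ is an isomorphism, and similarly for $\mathcal{B}_{\chi}^{(1)}$. Because $\tau$ commutes with proper pushforward, the injectivity of $(Fr_{\mathcal{B}})_{*}$ on rational $K$-theory is equivalent to the injectivity of the induced map $(Fr_{\mathcal{B}})_{*}:A_{\mathbb{Q}}(\mathcal{B}_{\chi})\to A_{\mathbb{Q}}(\mathcal{B}_{\chi}^{(1)})$ on rational Chow.

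Next, by the preceding claim the restriction of $Fr_{\mathcal{B}}$ to $\mathcal{B}_{\chi}$ coincides with the intrinsic Frobenius $Fr_{\mathcal{B}_{\chi}}:\mathcal{B}_{\chi}\to\mathcal{B}_{\chi}^{(1)}$, so it suffices to analyze $(Fr_{\mathcal{B}_{\chi}})_{*}$ on Chow. For any integral closed subvariety $Z\subseteq\mathcal{B}_{\chi}$ of dimension $i$, the relative Frobenius restricts to the composite $Z\xrightarrow{Fr_{Z}}Z^{(1)}\hookrightarrow\mathcal{B}_{\chi}^{(1)}$, with $Fr_{Z}$ finite of degree $p^{i}$; hence $(Fr_{\mathcal{B}_{\chi}})_{*}[Z]=p^{i}[Z^{(1)}]$ in $A_{i}(\mathcal{B}_{\chi}^{(1)})$. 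Because $k$ is algebraically closed, every integral closed subvariety of $\mathcal{B}_{\chi}^{(1)}$ is the Frobenius twist $Z^{(1)}$ of a unique integral $Z\subseteq\mathcal{B}_{\chi}$, so under the resulting bijection $A_{i}(\mathcal{B}_{\chi})\tilde{=}A_{i}(\mathcal{B}_{\chi}^{(1)})$ sending $[Z]$ to $[Z^{(1)}]$, the map $(Fr_{\mathcal{B}_{\chi}})_{*}$ is simply multiplication by $p^{i}$ on the $i$-th graded piece.

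Once rational coefficients are taken, multiplication by $p^{i}$ is invertible on each graded piece, so $(Fr_{\mathcal{B}_{\chi}})_{*}$ is in fact an isomorphism of rational Chow groups, and in particular injective. This delivers the proposition. There is no serious obstacle in this argument: after the reduction to Chow via $\tau$, the map becomes diagonal with nonzero scalar entries, and injectivity is automatic. The only external input needed is the Riemann--Roch compatibility of $\tau$ with proper pushforward, which is standard and was already invoked in the discussion at the start of this section.
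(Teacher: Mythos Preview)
Your proof is correct and follows essentially the same approach as the paper: both transport the question to rational Chow groups via the Riemann--Roch map $\tau$ (using that $\tau$ is an isomorphism here and commutes with proper pushforward), and then argue directly on algebraic cycles. The paper's version is terser---it simply notes that $Fr$ is finite, flat, and bijective on closed points, hence takes linearly independent cycles to linearly independent cycles---while you go further and compute explicitly that $(Fr_{\mathcal{B}_{\chi}})_{*}$ acts as multiplication by $p^{i}$ on $A_{i}$, yielding an isomorphism rather than mere injectivity; but the underlying idea is the same.
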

\begin{proof}
The map $Fr$ is finite, flat, and bijective on closed points. After
application of the map $\tau$, which commutes with proper pushforward,
we see that it is enough to check our claim at the level of algebraic
cycles. But it is immediate from the aforementioned properties of
the map $Fr$ that it takes a set of linearly independent cycles to
another. Further, we know that the chow groups of Springer fibres
are spanned by such cycles.
\end{proof}
Combining the proposition with the previous equality, we arrive an
equality in $K(\mathcal{B}_{\chi})$: 
\[
[CS(\mathcal{L}^{\lambda}(M_{k})]=p^{-d(e)}(Fr_{\mathcal{B}})^{*}[Coh(M_{k})]
\]
 Now, we must evaluate the operator $(Fr_{\mathcal{B}})^{*}$. To
this end, we shall translate the problem to $A(\mathcal{B}_{\chi})$
via the localized chern character $ch_{\mathcal{B}_{\chi}}^{\mathcal{B}}$.
We see that we have equalities 
\[
(ch_{\mathcal{B}_{\chi}}^{\mathcal{B}})_{i}(Fr_{\mathcal{B}})^{*}=(Fr_{\mathcal{B}})^{*}(ch_{\mathcal{B}_{\chi}^{(1)}}^{\mathcal{B}^{(1)}})_{i}
\]
in $A_{i}(\mathcal{B}_{\chi})$. 

Further, we can make an identification $A(\mathcal{B}_{\chi})\tilde{=}A(\mathcal{B}_{\chi}^{(1)})$
by the isomorphism of abstract varieties $\mathcal{B}_{\chi}\tilde{=}\mathcal{B}_{\chi}^{(1)}$.
Then we have the
\begin{prop}
The map $(Fr_{\mathcal{B}})^{*}|_{A_{i}(\mathcal{B}_{\chi}^{(1)})}$
followed by by the identification $A_{i}(\mathcal{B}_{\chi})\tilde{=}A_{i}(\mathcal{B}_{\chi}^{(1)})$
is simply multiplication by $p^{dim\mathcal{B}-i}$. \end{prop}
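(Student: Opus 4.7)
The plan is to reduce the computation to the standard fact that Frobenius pullback on an ambient smooth variety multiplies codimension-$c$ cycles by $p^{c}$, and then match the normalizations.

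First, I would realize the localized Chow groups concretely. By construction, $A_{i}(\mathcal{B}_{\chi})$ is the group of $i$-dimensional cycles on $\mathcal{B}$ that are supported set-theoretically on $\mathcal{B}_{\chi}$, modulo rational equivalence on $\mathcal{B}$; equivalently, it is a subgroup of $A^{d-i}(\mathcal{B})$ where $d=\dim\mathcal{B}$. Similarly for $A_{i}(\mathcal{B}_{\chi}^{(1)})$ inside $A^{d-i}(\mathcal{B}^{(1)})$. Since the paper has already invoked the fact that Springer fibres have Chow groups spanned by classes of algebraic subvarieties, I may restrict attention to classes $[Z]$ for $Z\subseteq\mathcal{B}_{\chi}$ an irreducible closed subvariety of dimension $i$, and prove the statement on each generator.

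Next, I would compute $(Fr_{\mathcal{B}})^{*}[Z^{(1)}]$ directly. The absolute Frobenius $Fr_{\mathcal{B}}\colon\mathcal{B}\to\mathcal{B}^{(1)}$ is a finite flat morphism between smooth varieties of dimension $d$; locally it is given by sending a function $f$ to $f^{p}$. Therefore if $Z^{(1)}$ is cut out near a smooth point by a regular sequence of length $d-i$, the scheme-theoretic preimage $Fr_{\mathcal{B}}^{-1}(Z^{(1)})$ is cut out by the $p$-th powers of these equations. A standard intersection-theoretic computation (or the general fact $Fr^{*}=p^{c}\cdot\mathrm{id}$ on $A^{c}$ of a smooth variety in characteristic $p$) then gives
\[
(Fr_{\mathcal{B}})^{*}[Z^{(1)}] \;=\; p^{d-i}\,[Z]
\]
as cycles on $\mathcal{B}$ supported on $\mathcal{B}_{\chi}$, i.e.\ as elements of $A_{i}(\mathcal{B}_{\chi})$.

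Finally I would check that, under the natural identification $A_{i}(\mathcal{B}_{\chi})\tilde{=}A_{i}(\mathcal{B}_{\chi}^{(1)})$ coming from the abstract isomorphism of varieties, the class $[Z]\in A_{i}(\mathcal{B}_{\chi})$ corresponds to $[Z^{(1)}]\in A_{i}(\mathcal{B}_{\chi}^{(1)})$. This is tautological: the identification is precisely base change by the automorphism of $\mathrm{Spec}(k)$ induced by Frobenius on $k$, which sends the cycle $Z$ to $Z^{(1)}$. Combining this with the previous paragraph yields the desired formula. The only step requiring care is the first, namely justifying that one can test the equality of operators on the classes of irreducible subvarieties generating $A_{i}(\mathcal{B}_{\chi})$; but this is exactly the content of the spanning result cited earlier from \cite{key-10}, so the main obstacle is really just bookkeeping of the codimension versus dimension conventions and of the identification $\mathcal{B}_{\chi}\tilde{=}\mathcal{B}_{\chi}^{(1)}$.
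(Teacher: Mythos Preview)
Your proposal is correct and follows essentially the same approach as the paper: reduce to classes of irreducible subvarieties (using that Frobenius is finite, flat, and bijective on points, so a cycle pulls back to a multiple of itself), then compute the multiplicity locally at a generic smooth point. The paper carries out this local computation by writing down the completed local ring $\hat{\mathcal{O}}_{x,\mathcal{B}}\otimes_{\hat{\mathcal{O}}_{x,\mathcal{B}^{(1)}}}\widehat{\mathcal{O}}_{x,V^{(1)}}\cong k[[x_{1},\dots,x_{\dim\mathcal{B}}]]/(x_{i+1}^{p},\dots,x_{\dim\mathcal{B}}^{p})$ and reading off the rank $p^{\dim\mathcal{B}-i}$, whereas you invoke the equivalent regular-sequence description or the standard fact that $Fr^{*}=p^{c}$ on $A^{c}$ of a smooth variety; these are the same computation in different clothing.
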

\begin{proof}
As above, since $Fr$ is finite, flat, and bijective on points, it
is clear that the pull-back takes a cycle to a multiple of itself.
Thus, it suffices to check the multiplicity locally, on an open subset
of a cycle (which is equivalent to checking at the generic point).
So, we let $V^{(1)}$ be an algebraic subvariety of $\mathcal{B}_{\chi}^{(1)}$
of dimension $i$, and $x\in V^{(1),sm}$ its smooth locus. We consider
the completed local ring at such a point, $\widehat{\mathcal{O}}_{x,V^{(1)}}\tilde{=}k[[x_{1}^{p},...,x_{i}^{p}]]$.
Then we can compute the pullback under $Fr$ as:
\[
\hat{\mathcal{O}}_{x,\mathcal{B}}\otimes_{\hat{\mathcal{O}}_{x,\mathcal{B}^{(1)}}}\widehat{\mathcal{O}}_{x,V^{(1)}}\tilde{=}k[[x_{1},...x_{dim(B)}]]/(x_{i+1}^{p},...,x_{dim(B)}^{p})
\]
 which is $\mathcal{\hat{O}}_{x,V}$-module of rank $p^{dim(B)-i}$.
The identification $A_{i}(\mathcal{B}_{\chi})\tilde{=}A_{i}(\mathcal{B}_{\chi}^{(1)})$
simply sends the cycle $V$ to $V^{(1)}$. This proves the proposition.
\end{proof}
Now, we let $[Coh(M_{k})]^{(-1)}$ be the class in $K(\mathcal{B}_{\chi})$
which corresponds to $[Coh(M_{k})]$ under the identification $K(\mathcal{B}_{\chi})\tilde{=}K(\mathcal{B}_{\chi}^{(1)})$
(we choose the identification of these groups which is compatible
under $ch_{\mathcal{B_{\chi}}}^{\mathcal{B}}$ and $ch_{\mathcal{B}_{\chi}^{(1)}}^{\mathcal{B}^{(1)}}$
with the identification $A(\mathcal{B}_{\chi})\tilde{=}A(\mathcal{B}_{\chi}^{(1)})$
used above). Then the proposition combined with our previous equality
yields 
\[
(ch_{\mathcal{B_{\chi}}}^{\mathcal{B}})_{i}[CS(\mathcal{L}^{\lambda}(M_{k})]=p^{-d(e)}p^{dim(\mathcal{B})-i}(ch_{\mathcal{B_{\chi}}}^{\mathcal{B}})_{i}[Coh(M_{k})]^{(-1)}=p^{dim(\mathcal{B}_{\chi})-i}(ch_{\mathcal{B_{\chi}}}^{\mathcal{B}})_{i}[Coh(M_{k})]^{(-1)}
\]
 (the last equality is simply because $d(e)=\mbox{dim}(\mathcal{B})-\mbox{dim}(\mathcal{B}_{\chi})$). 

Now we finish the proof of the theorem: for $\mbox{char}(k)$ sufficiently
large, the class $[CS(\mathcal{L}^{\lambda}(M_{k})]$ is independent
of $p$ (it is defined as the reduction of a class over $A$). Further,
the main result of \cite{key-6}, chapter 5, asserts that the same
is true of the class $[Coh(M_{k})]^{(-1)}$. So we deduce immediately
the equalities 
\[
(ch_{\mathcal{B_{\chi}}}^{\mathcal{B}})_{i}[CS(\mathcal{L}^{\lambda}(M_{k})]=0
\]
 for $i\neq\mbox{dim}(\mathcal{B}_{\chi})$ and 
\[
(ch_{\mathcal{B_{\chi}}}^{\mathcal{B}})_{dim(\mathcal{B}_{\chi})}[CS(\mathcal{L}^{\lambda}(M_{k})]=(ch_{\mathcal{B_{\chi}}}^{\mathcal{B}})_{dim(\mathcal{B}_{\chi})}[Coh(M_{k})]^{(-1)}
\]
 So from this we conclude that 
\[
(ch_{\mathcal{B_{\chi}}}^{\mathcal{B}})_{dim(\mathcal{B}_{\chi})}[CS(\mathcal{L}^{\lambda}(M_{k})]=(ch_{\mathcal{B_{\chi}}}^{\mathcal{B}})_{dim(\mathcal{B}_{\chi})}[Coh(M_{k})]^{(-1)}=(ch_{\mathcal{B_{\chi}}}^{\mathcal{B}})[Coh(M_{k})]^{(-1)}
\]

The map on the left is exactly the characteristic cycle map. In addition,
the map on the right is the image in $K$-theory of the equivalence
of categories 
\[
D^{b}(mod_{\chi}^{f.g.}(U(\mathfrak{g},e)^{\lambda})\to D^{b}(Coh_{\mathcal{B}_{\chi}}(\tilde{S}_{\mathcal{N}}))
\]
 and thus is obviously injective. This proves Theorem ?, and thus
the injectivity in characteristic zero.

\section{$W$-Equivariance}

This section is devoted to a discussion of the equivariance of the
characteristic cycle with respect to the action of the Weyl group
$W$. We begin with the following 
\begin{thm}
\cite{key-23} For any algebraically closed field $k$ of sufficiently
large positive characteristic (or characteristic zero), the variety
$\tilde{S}_{\mathcal{N},k}$ admits a (weak) action of the braid group
$B$ of type $\mathfrak{g}$. This action induces an action of the
Weyl group $W$ on $K_{\mathbb{Q}}(\mathcal{B}_{\chi})$, where it
is equivalent to Springer's representation. 
\end{thm}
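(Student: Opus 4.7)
The plan is to construct the braid group action geometrically via Fourier-Mukai kernels on $\tilde{S}_{\mathcal{N}}$, check that these generators satisfy the braid relations, and then match the induced $W$-action on $K$-theory with Springer's via Ginzburg's convolution-algebra description of $\mathbb{Q}[W]$.

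First, for each simple root $\alpha$, I would form the partial Springer resolution $\tilde{S}_{\mathcal{N},\alpha}\to S_{\mathcal{N}}$ obtained by passing to the associated parabolic flag variety, and consider the correspondence $Z_\alpha := \tilde{S}_{\mathcal{N}}\times_{\tilde{S}_{\mathcal{N},\alpha}}\tilde{S}_{\mathcal{N}}$ inside $\tilde{S}_{\mathcal{N}}\times\tilde{S}_{\mathcal{N}}$. The (appropriately twisted) structure sheaf of $Z_\alpha$ defines a Fourier-Mukai kernel $T_\alpha$ on $D^b(Coh(\tilde{S}_{\mathcal{N}}))$ which preserves the full subcategory of sheaves set-theoretically supported on $\mathcal{B}_\chi$. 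Verifying the braid relations $T_\alpha T_\beta T_\alpha\cdots = T_\beta T_\alpha T_\beta\cdots$ reduces to rank-two subroot systems, where one can either compute the composite kernels directly as structure sheaves of explicit subvarieties of $\tilde{S}_{\mathcal{N}}^3$, or invoke the general spherical-twist formalism. This is essentially the content of the cited paper; the positive characteristic case is compatible with our Azumaya setup by the localization theorems of the previous sections.

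Second, I would pass to $K$-theory, where the square $T_\alpha^2$ must act trivially on $K_\mathbb{Q}(\mathcal{B}_\chi)$: the difference $T_\alpha^2-\mathrm{id}$ is a convolution whose kernel admits a filtration by sheaves with cancelling $K$-classes (the standard computation for a $\mathbb{P}^1$-bundle). Consequently, the braid action descends to a genuine $W$-action on $K_\mathbb{Q}(\mathcal{B}_\chi)$.

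The main obstacle is identifying this $W$-action with the classical Springer representation. My plan is to route through Ginzburg's description of $\mathbb{Q}[W]$ as the convolution algebra on the Borel-Moore homology of the Steinberg variety $Z_{\text{Stein}} = \tilde{\mathcal{N}}\times_{\mathcal{N}}\tilde{\mathcal{N}}$, under which the natural action on $H_*(\mathcal{B}_\chi,\mathbb{Q})$ is precisely Springer's. Transporting this to $K$-theory via the Chern character/Riemann-Roch isomorphism (available because the relevant varieties have algebraic cell decompositions in the sense of the previous section), one must check that the simple reflection $s_\alpha\in\mathbb{Q}[W]$ corresponds, up to a known correction involving the dualizing sheaf of the $\mathbb{P}^1$-fibration $\mathcal{B}\to\mathcal{P}_\alpha$, to the class of the component of the Steinberg variety cut out by $\alpha$. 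The delicate issue is tracking signs, Tate twists, and the precise line-bundle normalization of $T_\alpha$; this is where most of the technical work lies, and where one typically must either modify the kernel by a specific twist or work modulo lower-order terms of a natural filtration on $K_\mathbb{Q}$.
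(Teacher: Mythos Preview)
The paper does not prove this theorem at all: it is stated with a citation to \cite{key-23} (Riche) and immediately used as input. There is no ``paper's own proof'' to compare against; the result is imported wholesale from the literature on geometric braid group actions (Riche, Bezrukavnikov--Riche, and ultimately \cite{key-5,key-6}).

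That said, your outline is a fair summary of how the cited construction actually goes. A couple of points where your sketch is loose relative to what one finds in \cite{key-23}: the kernels there are built on $\tilde{\mathfrak{g}}$ (or $\tilde{\mathcal{N}}$) and then restricted, rather than directly on $\tilde{S}_{\mathcal{N}}$, and the compatibility of that restriction with the braid relations is what one needs; your claim that $T_\alpha^2$ acts trivially on $K_{\mathbb{Q}}$ is the right phenomenon but the honest statement is that the braid action on $K$-theory satisfies the Hecke relation, which degenerates to $s_\alpha^2=1$ after the appropriate normalization; and the identification with Springer in the cited works is typically obtained by matching with Lusztig's description of the affine Hecke algebra action on $K$-theory rather than directly via Ginzburg's Borel--Moore convolution picture, though the two are of course compatible. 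None of this is a gap so much as a reminder that the normalizations and twists you flag as ``where most of the technical work lies'' are indeed the entire content of the cited paper, and the present paper simply invokes that content.
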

Thus, combining this with the result of the previous section, we see
that it suffices to show that the $W$ action on $K(mod^{f.g.}(U^{\lambda}(\mathfrak{g},e)))$,
upon reduction mod $p$, agrees with the above defined action on the
category of coherent sheaves. 

Now, it is already known (c.f. \cite{key-23}) that the braid group
action on coherent sheaves agrees, under the localization equivalence
of \cite{key-5} with the one coming from the translation functors
on $U(\mathfrak{g})$-modules. Therefore, what remains to be done
is to compare the translation functor action on $mod^{f.g.}(U(\mathfrak{g},e))$
with the one on $mod^{f.g.}(U(\mathfrak{g}))$, and to look at the
compatibility with the localization functors of this paper. We start
by giving a brief review of the theory of translation functors for
$U(\mathfrak{g},e)$-modules, as developed in \cite{key-35}.

\subsection{Translation Functors for W-algebras}

Let $V$ be a finite dimensional complex representation of $\mathfrak{g}$.
Then, as is well known, the functor $M\to M\otimes_{\mathbb{C}}V$
is a well defined endofunctor of the category $mod^{f.g.}(U(\mathfrak{g}))$,
by putting the usual tensor product action of $\mathfrak{g}$ on $M\otimes_{\mathbb{C}}V$.
After decomposing this functor with respect to the action of the center
of $U(\mathfrak{g})$, one obtains the translation functors $mod^{f.g.}(U^{\hat{\lambda}}(\mathfrak{g}))\to mod^{f.g.}(U^{\hat{\mu}}(\mathfrak{g}))$
for central characters $\lambda$, $\mu$. As is well known, the same
theory exists in positive characteristic, under the assumption that
$V$ integrates to a $G$-module for the associated simply connected
algebraic group $G$ (this assumption is of course vacuous in characteristic
zero). 

In the paper \cite{key-35}, Goodwin has defined a compatible theory
of translation functors for the finite $W$-algebras. Namely, he defines
translation on $U(\mathfrak{g},e)$-modules via the Skryabin equivalence
and translation on $U(\mathfrak{g})$-modules. Explicitly, for $M\in mod^{f.g.}(U(\mathfrak{g},e))$
and $V$ a finite dimensional integrable $\mathfrak{g}$-module, define
\[
M\star V:=\mbox{Wh}_{\chi}((Q\otimes_{U(\mathfrak{g},e)}M)\otimes_{k}V)
\]
 where $Q$ is the bimodule featured in section 3 above, and $\mbox{Wh}_{\chi}$
is the functor which associates to an $(\mathfrak{m}_{\mathfrak{l}},\chi)$-integrable
$\mathfrak{g}$-module the space of vectors such that $\mathfrak{m}_{\mathfrak{l}}$
acts with strict character $\chi$. We recall that this functor is
the inverse to Skryabin's equivalence. It is explained in \cite{key-35}
that this procedure is well defined (i.e., that the adjoint action
of $\mathfrak{m}_{\mathfrak{l}}$ on $(Q\otimes_{U(\mathfrak{g},e)}M)\otimes_{k}V$
is $(\mathfrak{m}_{\mathfrak{l}},\chi)$ integrable) and really gives
an endofunctor of $mod^{f,g.}(U(\mathfrak{g},e))$. We have been vague
about the ground field; the procedure works the same way over any
algebraically closed field for which the algebra $U(\mathfrak{g},e)$
and the bimodule $Q$ are defined (c.f. section 6 above). Furthermore,
since the Skryabin equivalence takes $U(\mathfrak{g},e)$ modules
of a given generalized central character to $U(\mathfrak{g})$ modules
with the same generalized central character, the translation functors
for finite $W$-algebras give rise to translation functors on categories
of $U^{\hat{\lambda}}(\mathfrak{g},e)$ modules. 

Now let us specialize to the case where $\mbox{char}(k)>0$. In this
case, we wish to work with the finer decompositions of our categories
given by the $p$-character. Namely, we wish to show 
\begin{lem}
Suppose that $M$ is a $U(\mathfrak{g},e)$ module with generalized
$p$-character $\chi$. Then $M\star V$ also has generalized $p$-character
$\chi$. \end{lem}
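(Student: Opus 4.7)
The plan is to reduce the statement to the analogous claim for $U(\mathfrak{g})$-modules via Skryabin's equivalence, then verify preservation of $p$-character directly on the enveloping algebra side using the restricted structure of $V$. The key point is that both Skryabin's equivalence and its inverse $\mathrm{Wh}_\chi$ preserve the generalized $p$-character, because the $p$-center of $U(\mathfrak{g},e)$ is obtained (via Premet's Hamiltonian reduction construction, as reviewed in Section 6) as a quotient of the $p$-center of $U(\mathfrak{g})$, and $Q \otimes_{U(\mathfrak{g},e)} M$ is acted on by $p$-central elements of $U(\mathfrak{g})$ through the left $U(\mathfrak{g})$-module structure on $Q$. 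Thus $M$ has generalized $p$-character $\chi$ (as a $U(\mathfrak{g},e)$-module) if and only if $N := Q \otimes_{U(\mathfrak{g},e)} M$ has generalized $p$-character $\chi$ (as a $U(\mathfrak{g})$-module). Since by definition $M \star V = \mathrm{Wh}_\chi(N \otimes_k V)$, the lemma follows once we know that $N \otimes_k V$ has generalized $p$-character $\chi$.

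For the latter, I would compute directly. The action of $x \in \mathfrak{g}$ on $N \otimes_k V$ is $x \otimes 1 + 1 \otimes x$, and since $x \otimes 1$ commutes with $1 \otimes x$, the freshman's binomial theorem in characteristic $p$ yields
\[
(x \otimes 1 + 1 \otimes x)^p \;=\; x^p \otimes 1 \;+\; 1 \otimes x^p .
\]
On the other hand $x^{[p]}$ acts on $N \otimes V$ as $x^{[p]} \otimes 1 + 1 \otimes x^{[p]}$, so the $p$-central element $x^p - x^{[p]} \in Z_p(U(\mathfrak{g}))$ acts on $N \otimes V$ as the sum of its actions on each tensor factor. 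Because $V$ is integrable (a $G$-module), the $p^{\mathrm{th}}$-power map on $\mathrm{End}(V)$ agrees with the restricted $p$-operation, so $x^p - x^{[p]}$ acts by zero on $V$. Consequently $x^p - x^{[p]}$ acts on $N \otimes V$ precisely as $(x^p - x^{[p]})|_N \otimes \mathrm{id}_V$, and hence if this operator has generalized eigenvalue $\chi(x)^p - \chi(x^{[p]})$ on $N$, the same is true on $N \otimes V$.

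Finally, $\mathrm{Wh}_\chi(N \otimes V)$ is a $U(\mathfrak{g})$-submodule of $N \otimes V$, so the $p$-central elements of $U(\mathfrak{g})$ act on it with the same generalized character $\chi$; translating this back across Skryabin's equivalence recovers the statement that $M \star V = \mathrm{Wh}_\chi(N \otimes V)$ has generalized $p$-character $\chi$ as a $U(\mathfrak{g},e)$-module. The main technical point, and the only place one must pay attention, is the compatibility of $p$-centers under Skryabin's equivalence; this is implicit in the setup of Section 4 and in Premet's description of $U(\mathfrak{g})_\chi$ as a matrix algebra over $U(\mathfrak{g},e)_\chi$ (cited at the end of Section 6.5), and essentially amounts to the statement that the $p$-central subalgebra $O(\tilde S^{(1)})$ of $U(\mathfrak{g},e)$ is the image of $O(\tilde{\mathfrak{g}}^{*(1)}) \subset Z(U(\mathfrak{g}))$ under Hamiltonian reduction. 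Once this compatibility is made explicit, no further calculation is needed.
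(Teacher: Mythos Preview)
Your overall strategy matches the paper's---pass to the $U(\mathfrak{g})$-side via Skryabin, use that $V$ is restricted so tensoring with $V$ preserves $p$-support, then come back via $\mathrm{Wh}_\chi$---but two of your intermediate assertions are false, and they are precisely the points where the paper's proof does real work.

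First, it is \emph{not} true that $N = Q \otimes_{U(\mathfrak{g},e)} M$ has generalized $p$-character $\chi$ as a $U(\mathfrak{g})$-module. The bimodule $Q = U(\mathfrak{g})/I_\chi$ is only supported (over the $p$-center $O(\mathfrak{g}^{*(1)})$) on the subvariety $\mu^{-1}(\chi)^{(1)}$, not on the single point $\chi^{(1)}$; consequently the $p$-support of $N$ is the fibre $(p^{(1)})^{-1}(\chi^{(1)})$ under the $M_l^{(1)}$-bundle projection $p^{(1)}:\mu^{-1}(\chi)^{(1)}\to S^{(1)}$, which is a full copy of $M_l^{(1)}$ rather than a point. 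The paper remarks on exactly this just after the lemma: ``the module $Q\otimes_{U(\mathfrak{g},e)}M$ does not belong to $Mod^{f.g.}(U^{\hat\lambda}(\mathfrak{g})_{\hat\chi})$.'' So your ``if and only if'' is wrong in the forward direction.

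Second, $\mathrm{Wh}_\chi(N\otimes V)$ is \emph{not} a $U(\mathfrak{g})$-submodule of $N\otimes V$: it is the subspace of strict $(\mathfrak{m}_l,\chi)$-eigenvectors, which is only preserved by $U(\mathfrak{g},e)$, not by all of $U(\mathfrak{g})$. So you cannot simply say the $p$-central elements of $U(\mathfrak{g})$ act on it with the same generalized character. What the paper does instead is observe that the $p$-support of $N\otimes V$ is the single $M_l^{(1)}$-orbit $(p^{(1)})^{-1}(\chi^{(1)})$, and that applying $\mathrm{Wh}_\chi$ (which quotients by this $M_l$-direction) therefore produces a module whose $p$-support in $S^{(1)}$ is the image point $\chi^{(1)}$. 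Your computation that tensoring with the restricted module $V$ does not move the $p$-support is correct and is used in the paper as well; what is missing is the correct bookkeeping of supports along the bundle $\mu^{-1}(\chi)^{(1)}\to S^{(1)}$ on either side of Skryabin's equivalence.
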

\begin{proof}
By definition, we have $Q=U(\mathfrak{g})/I_{\chi}$. Therefore, this
is a finite module over the algebra $S(\mathfrak{g}^{(1)})/I_{\chi}^{(1)}=\mbox{Spec}(\mu^{-1}(\chi)^{(1)})$
via the Frobenius embedding $S(\mathfrak{g}^{(1)})\to U(\mathfrak{g})$.
Let $p^{(1)}:\mu^{-1}(\chi)^{(1)}\to S^{(1)}$ denote the natural
projection (recall that this is an $M_{\mathfrak{l}}^{(1)}$-bundle).
Then the (set-theoretic) $p$-support of the $U(\mathfrak{g})$-module
$Q\otimes_{U(\mathfrak{g},e)}M$ will be the variety $(p^{(1)})^{-1}(\mbox{Supp}(M))$,
where $\mbox{Supp}(M)$ denotes the (set theoretic) $p$-support of
$M$. 

On the other hand, $V$ is an integrable module, and so has trivial
$p$-support. Therefore $(Q\otimes_{U(\mathfrak{g},e)}M)\otimes_{k}V)$
has $p$-support equal to $(p^{(1)})^{-1}(\mbox{Supp}(M))$ as well. 

Now, if $M$ has $p$-support exactly $\chi^{(1)}$, then $(Q\otimes_{U(\mathfrak{g},e)}M)\otimes_{k}V)$
has $p$-support $(p^{(1)})^{-1}(\chi^{(1)})$. Since this fibre is
a single copy of the Frobenius twisted group scheme $M_{\mathfrak{l}}^{(1)}$,
if we then take the Whittaker invariants functor $\mbox{Wh}_{\chi}$,
the resulting module will again have $p$-support $\chi^{(1)}$. 
\end{proof}
Now, this result tells us that, for a given finite $G$-module $V$,
we have well defined maps $T_{V}:K(Mod^{f.g.}(U^{\hat{\lambda}}(\mathfrak{g},e)_{\hat{\chi}})\to K(Mod^{f.g.}(U^{\hat{\mu}}(\mathfrak{g},e)_{\hat{\chi}})$
for any integral central characters $\lambda$ and $\mu$, which agree
with the translation functors in characteristic zero for any module
obtained via reduction mod $p$. On the other hand, the theory of
translation functors for $U(\mathfrak{g})$ modules gives us maps
$T_{V}:K(Mod^{f.g.}(U^{\hat{\lambda}}(\mathfrak{g})_{\hat{\chi}})\to K(Mod^{f.g.}(U^{\hat{\mu}}(\mathfrak{g})_{\hat{\chi}})$.
Further, Premet's equivalence of categories recalled in section 5.5
above gives us isomorphisms $K(Mod^{f.g.}(U^{\hat{\lambda}}(\mathfrak{g},e)_{\hat{\chi}})\tilde{\to}K(Mod^{f.g.}(U^{\hat{\lambda}}(\mathfrak{g})_{\hat{\chi}})$.
Therefore, we wish to assert the obvious compatbility. This isn't
quite the same as the previous lemma, since the module $Q\otimes_{U(\mathfrak{g},e)}M$
does not belong to $Mod^{f.g.}(U^{\hat{\lambda}}(\mathfrak{g})_{\hat{\chi}}$. 

To get what we want, let us recall that the equivalence of categories
\[
Mod^{f.g.}(U^{\hat{\lambda}}(\mathfrak{g},e)_{\chi})\tilde{\to}Mod^{f.g.}(U^{\hat{\lambda}}(\mathfrak{g})_{\chi})
\]
 is given by the functor $M\to Q_{\chi}\otimes_{U(\mathfrak{g},e)_{\chi}}M$,
where $Q_{\chi}$ is the $(U(\mathfrak{g})_{\chi},U(\mathfrak{g},e)_{\chi})$-bimodule
given by $k_{\chi^{(1)}}\otimes_{S(\mathfrak{g}^{(1)})/I_{\chi}^{(1)}}Q$
; i.e., the reduction of $Q$ to $p$-character $\chi$. Therefore
Premet's functor can be expressed as the functor $M\to k_{\chi^{(1)}}\otimes_{S(\mathfrak{g}^{(1)})/I_{\chi}^{(1)}}Q\otimes_{U(\mathfrak{g},e)}M$.
The inverse is the functor $\mbox{Wh}_{\chi}$, which applies as usual
to $Mod^{f.g.}(U^{\hat{\lambda}}(\mathfrak{g})_{\chi})$. Thus the
compatibility we seek comes down to the 
\begin{claim}
For a finite dimensional $G$-module $V$, we have isomorphisms of
$U(\mathfrak{g},e)$-modules
\[
\mbox{Wh}_{\chi}(k_{\chi^{(1)}}\otimes_{S(\mathfrak{g}^{(1)})/I_{\chi}^{(1)}}Q\otimes_{U(\mathfrak{g},e)}M\otimes_{k}V)\tilde{\to}\mbox{Wh}_{\chi}(Q\otimes_{U(\mathfrak{g},e)}M\otimes_{k}V)
\]

\end{claim}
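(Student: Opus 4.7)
My plan is to unravel both sides via Skryabin's equivalence and then invoke the strict form of Premet's equivalence recalled just before the claim. The overall strategy is to identify both sides with $M \star V$ itself, reducing the entire problem to the single assertion that $M \star V$ lies in the strict (rather than merely generalized) $p$-character block.

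Set $N := Q \otimes_{U(\mathfrak{g},e)} M \otimes_k V$, so that the right-hand side of the claim is by definition $M \star V := \mbox{Wh}_\chi(N)$. The preceding lemma ensures $N$ is $(\mathfrak{m}_l,\chi)$-equivariant, and Skryabin's equivalence then supplies a canonical isomorphism $Q \otimes_{U(\mathfrak{g},e)} (M \star V) \tilde{\to} N$ of $(\mathfrak{m}_l,\chi)$-equivariant $U(\mathfrak{g})$-modules. Next, the subalgebra $A := S(\mathfrak{g}^{(1)})/I_\chi^{(1)} \subset U(\mathfrak{g})$ is central, and its action on $Q$ commutes with the right $U(\mathfrak{g},e)$-action, so the base-change functor $k_{\chi^{(1)}} \otimes_A(-)$ commutes with $-\otimes_{U(\mathfrak{g},e)}-$. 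Applying it to the above isomorphism yields
\[
Q_\chi \otimes_{U(\mathfrak{g},e)} (M \star V) \;\tilde{\to}\; Q_\chi \otimes_{U(\mathfrak{g},e)} M \otimes_k V.
\]
Now apply $\mbox{Wh}_\chi$: the right-hand side becomes precisely the LHS of the claim, while the left-hand side recovers $M \star V$ by the strict form of Premet's equivalence, provided $M \star V$ has strict $p$-character $\chi^{(1)}$.

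The main obstacle is to establish this strictness. My approach would be a Whittaker rigidity argument: for any $w \in \mbox{Wh}_\chi(N)$ and any $m \in \mathfrak{m}_l$, taking the $p$-th power of $(m - \chi(m))w = 0$ gives $(m^p - \chi(m)^p) w = 0$, and since $\mathfrak{m}_l^{[p]} = 0$ by the running hypothesis, the central element $m^p - m^{[p]} \in A$ acts on $w$ by the scalar $\chi(m)^p$. Combining this with the preceding lemma's description of the set-theoretic $A$-support of $N$ (the $M_l^{(1)}$-orbit of $\chi^{(1)}$ inside $\chi + \mathfrak{m}_l^\perp$), and with the freeness of that $M_l$-action, one pins down $w$ to the strict $\chi^{(1)}$-eigenspace of $A$. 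Passing to invariants along $\mathcal{O}(S^{(1)}) = A^{M_l^{(1)}}$ then shows that the central subalgebra of $U(\mathfrak{g},e)$ governing $p$-character acts on $M \star V$ by the strict character $\bar{\chi}^{(1)}$, yielding the needed strictness. If one takes the module-level assumption on $M$ to be only generalized $p$-character, a short devissage using that both $-\star V$ and Premet's functor respect short exact sequences reduces to the strict case (which in any event suffices for the $K$-theoretic applications in the remainder of the paper); with strictness in hand, the Skryabin-reduction argument produces the desired isomorphism, the arrow in the claim's statement being the inverse of the canonical map $\mbox{Wh}_\chi(N) \to \mbox{Wh}_\chi(N_\chi)$ induced by the quotient $Q \twoheadrightarrow Q_\chi$.
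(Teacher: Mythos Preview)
Your overall structure is exactly the paper's: unravel both sides via Skryabin and Premet, reducing the claim to the assertion that $M\star V$ already carries the requisite $p$-character so that the base change $k_{\chi^{(1)}}\otimes_A(-)$ is superfluous. Your Steps (1)--(4) are a correct and more explicit rendering of the paper's one-line ``unravelling the definitions'', and your observation that d\'evissage handles the passage between strict and generalized $p$-character for the $K$-theoretic application is also right.

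The gap is in your strictness argument. The computation you call ``Whittaker rigidity'' is vacuous: for $m\in\mathfrak{m}_l$ the element $m^p-m^{[p]}-\chi(m)^p$ lies in the ideal $I_\chi^{(1)}$ defining $A$, so $(m^p-m^{[p]})w=\chi(m)^p\,w$ holds for \emph{every} $w\in N$, not just Whittaker vectors. These elements are constant on the entire $M_l^{(1)}$-orbit and cannot ``pin down'' $w$ to the fibre at $\chi^{(1)}$. The appeal to freeness of the $M_l$-action does not repair this, because freeness gives you a bijection between the orbit and $M_l^{(1)}$ but no mechanism to single out a point from the Whittaker condition alone.

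What actually makes the argument work (and what the paper is gesturing at with ``as in the proof of the above lemma'') is much more direct and does not require locating $w$ inside a strict $A$-eigenspace at all. One only needs strictness over the smaller subalgebra $\mathcal{O}(S^{(1)})=A^{M_l^{(1)}}\subset A$, since that is the $p$-centre of $U(\mathfrak{g},e)$. Now $\mathcal{O}(S^{(1)})$ is central in both $U(\mathfrak{g})$ and $U(\mathfrak{g},e)$, so its left action on the bimodule $Q$ agrees with its right action; hence on $Q\otimes_{U(\mathfrak{g},e)}M$ it acts through $M$. Since $V$ has trivial $p$-character, the same is true on $N$. Thus if $M$ has strict $p$-character $\chi^{(1)}$ over $\mathcal{O}(S^{(1)})$, so does all of $N$, and in particular so does $\mbox{Wh}_\chi(N)=M\star V$. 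With this replacement your proof goes through.
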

the proof of which consists of unravelling the definitions (any module
obtained as $\mbox{Wh}_{\chi}(Q\otimes_{U(\mathfrak{g},e)}M\otimes_{k}V)$
already satisfies the requisite $p$-character condition, as in the
proof of the above lemma, so the application of $k_{\chi^{(1)}}\otimes_{S(\mathfrak{g}^{(1)})/I_{\chi}^{(1)}}$
is superfluous).

\subsection{Compatibility}

As recalled above, the braid group action to the category $ $$D^{b}(Coh_{\mathcal{B}_{\chi}^{(1)}}(\tilde{S}_{\mathcal{N}}))$
is defined via restriction from the one on $D^{b}(Coh_{\mathcal{B}_{\chi}^{(1)}}(\tilde{\mathcal{N}}))$.
Therefore, our remaining task is to show that (the image in $K$-theory
of) the coherent sheaf restriction functor $i^{*}:K(Coh_{\mathcal{B}_{\chi}^{(1)}}(\tilde{\mathcal{N}}))\to K(Coh_{\mathcal{B}_{\chi}^{(1)}}(\tilde{S}_{\mathcal{N}}))$
is compatible under localization with the functor $\mbox{Wh}_{\chi}:K(Mod^{f.g.}(U^{\hat{\lambda}}(\mathfrak{g})_{\hat{\chi}})\to K(Mod^{f.g.}(U^{\hat{\lambda}}(\mathfrak{g},e)_{\hat{\chi}})$.
In fact, we shall show
\begin{lem}
Let $M\in Mod^{f.g.}(U^{\lambda}(\mathfrak{g})_{\chi})$, and let
$\mbox{Coh}(M)\in D^{b}(Coh_{\mathcal{B}_{\chi}^{(1)}}(\tilde{\mathcal{N}}))$
be its localization. Then there is an isomorphism 
\[
i^{*}\mbox{Coh}(M)\tilde{\to}\mbox{Coh}(\mbox{Wh}_{\chi}(M))
\]
 in $ $$D^{b}(Coh_{\mathcal{B}_{\chi}^{(1)}}(\tilde{S}_{\mathcal{N}}))$. \end{lem}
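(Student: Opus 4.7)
The plan is to combine Premet's Morita equivalence in positive characteristic with the characteristic-$p$ analogue of the compatibility theorem for $p^{*}$ from Section 3.1, then to convert the resulting identification of $D$-modules into an isomorphism of coherent sheaves via the two Azumaya splittings, which are linked by the splitting theorem of Section 5.5.

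First I would use Premet's isomorphism $U^{\lambda}(\mathfrak{g})_{\chi}\cong \mbox{Mat}_{p^{d(e)}}(U^{\lambda}(\mathfrak{g},e)_{\chi})$ to furnish the canonical identification $M\cong Q_{\chi}\otimes_{U^{\lambda}(\mathfrak{g},e)_{\chi}}\mbox{Wh}_{\chi}(M)$, under which the inverse of the Morita equivalence is realized by $\mbox{Wh}_{\chi}$. The proof of the $p^{*}$-compatibility theorem of Section 3.1 carries over verbatim to positive characteristic, so I would deduce
\[
p^{*}\mathcal{L}^{\lambda,\chi}(\mbox{Wh}_{\chi}(M))\;\cong\;\mathcal{L}^{\lambda}(Q_{\chi}\otimes_{U^{\lambda}(\mathfrak{g},e)_{\chi}}\mbox{Wh}_{\chi}(M))\;\cong\;\mathcal{L}^{\lambda}(M).
\]
Passing to coherent sheaves via the Azumaya splittings (where $D^{\lambda}$ is split by $M_{\chi}^{\lambda}$ and $D^{\lambda,\chi}$ is split by $E_{\chi}^{\lambda}$ on the formal neighborhoods of $\mathcal{B}_{\chi}^{(1)}$), this becomes an identification $p^{*}(E_{\chi}^{\lambda}\otimes \mbox{Coh}(\mbox{Wh}_{\chi}(M)))\cong M_{\chi}^{\lambda}\otimes \mbox{Coh}(M)$.

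The core of the argument is to restrict the above along $i:\tilde{S}_{\mathcal{N}}\hookrightarrow\tilde{\mathcal{N}}$ and match both sides. The Azumaya splitting theorem of Section 5.5 supplies $i^{*}M_{\chi}^{\lambda}\cong E_{\chi}^{\lambda}\otimes_{k}V$ for $\dim V=p^{d(e)}$, so the right-hand side restricts to $(E_{\chi}^{\lambda}\otimes V)\otimes i^{*}\mbox{Coh}(M)$. On the left-hand side, I would identify $i^{*}\circ p^{*}$, applied to the sheaf $E_{\chi}^{\lambda}\otimes \mbox{Coh}(\mbox{Wh}_{\chi}(M))$, with $V\otimes E_{\chi}^{\lambda}\otimes \mbox{Coh}(\mbox{Wh}_{\chi}(M))$, the extra factor of $V$ reflecting the $p^{d(e)}$-dimensional fibre of the $M_{l}^{(1)}$-bundle $p:\mu^{-1}(\chi)^{(1)}\to\tilde{S}_{\mathcal{N}}^{(1)}$. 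Cancelling the locally free sheaf $E_{\chi}^{\lambda}\otimes V$ then yields the desired $i^{*}\mbox{Coh}(M)\cong \mbox{Coh}(\mbox{Wh}_{\chi}(M))$.

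The main obstacle is this last identification of $i^{*}\circ p^{*}$ with the $V$-fold twist. One route I would try is étale-local: both Azumaya algebras split literally on a suitable étale cover, where $p$ becomes a trivial $M_{l}^{(1)}$-bundle, reducing the claim to a computation on coordinate rings. A more intrinsic alternative is to bypass the sheaf-level geometry by comparing derived global sections: $R\Gamma(\mbox{Coh}(\mbox{Wh}_{\chi}(M)))\cong \mbox{Wh}_{\chi}(M)$ by the $W$-algebra localization equivalence, while $R\Gamma(i^{*}\mbox{Coh}(M))$ may be computed using the compatibility of localization with Skryabin's functor to yield the same answer; the fully faithful property of $R\Gamma$ then promotes this module-level isomorphism to the sheaf-level statement.
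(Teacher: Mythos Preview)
Your ingredients are the right ones (Premet's Morita equivalence, the two splitting bundles, the relation $i^{*}M_{\chi}^{\lambda}\cong E_{\chi}^{\lambda}\otimes V$), but the detour through $p^{*}$ creates the very obstacle you flag and does not resolve it. The functor $p^{*}$ you invoke is a functor between categories of modules over two \emph{different} Azumaya algebras, whereas the $i^{*}$ you then apply is ordinary coherent pullback; these do not compose in any formula like ``$i^{*}\circ p^{*}\cong V\otimes(-)$'' without first identifying $i^{*}D^{\lambda}$ with a matrix algebra over $D^{\lambda,\chi}$ and checking that the resulting Morita equivalence matches the $V$ coming from Premet. That identification is the whole content of the lemma, so your $R\Gamma$ alternative is circular (you cannot compute $R\Gamma$ of $i^{*}\mathrm{Coh}(M)$ as a $U(\mathfrak{g},e)$-module without already knowing $i^{*}\mathrm{Coh}(M)$ as a $D^{\lambda,\chi}$-module), and your \'etale-local suggestion, once written out, becomes the paper's argument.

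The paper proceeds more directly and avoids $p^{*}$ altogether. The key move you are missing is to first twist to an \emph{unramified} weight $\mu$; then, on the formal neighborhood of $\mathcal{B}_{\chi}^{(1)}$, both Azumaya algebras $D^{\mu}$ and $D(\chi)^{\mu}$ are literally pulled back along $\pi$ (resp.\ $\pi|_{\tilde{S}_{\mathcal{N}}}$) from the Azumaya algebras $U(\mathfrak{g})_{\hat{\chi}}^{\mu}$ and $U(\mathfrak{g},e)_{\hat{\chi}}^{\mu}$ over the single point $\chi^{(1)}$. Premet's isomorphism at the point says $i^{*}U(\mathfrak{g})_{\hat{\chi}}^{\mu}$ is a $p^{d(e)}$-matrix algebra over $U(\mathfrak{g},e)_{\hat{\chi}}^{\mu}$, with the Morita bimodule $V$ implementing $\mathrm{Wh}_{\chi}$. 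Since $\pi\circ i=i\circ\pi|_{\tilde{S}_{\mathcal{N}}}$, this pulls back verbatim to a Morita equivalence $i^{*}D^{\mu}_{\hat{\chi}}\cong\mathrm{Mat}(D(\chi)^{\mu}_{\hat{\chi}})$ via $(\pi|_{\tilde{S}_{\mathcal{N}}})^{*}V$. Applying this Morita functor to $i^{*}$ of the localized $D^{\mu}$-module yields exactly $\mathcal{L}^{\mu}(\mathrm{Wh}_{\chi}(M))$, and tensoring with the compatible splitting bundles finishes. In short: replace your $p^{*}$ step by the observation that at unramified weight everything is pulled back from the base point, so the sheaf-level Morita equivalence is the pullback of Premet's.
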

\begin{proof}
To prove this, we recall that the functor $\mbox{Coh}$ (for both
$U(\mathfrak{g})$ and $U(\mathfrak{g},e)$ modules) is obtained by
applying $\mathcal{L}^{\lambda}$ to arrive in $Mod(D^{\lambda})$
or $Mod(D(\chi)^{\lambda})$, then applying the splitting bundle for
$U(\mathfrak{g})$ or $U(\mathfrak{g},e)$. Further, this splitting
bundle, by definition, is compatible with the shift taking the weight
$\lambda$ to an unramified weight $\mu$. So we have 
\[
\mbox{Coh}(M)=M_{\chi}^{\mu}\otimes_{D^{\mu}}\tau_{\lambda}^{\mu}(D^{\lambda}\otimes_{U(\mathfrak{g})^{\lambda}}M)
\]
 where $\tau$ is the shift functor between weights and $M_{\chi}^{\mu}$
is the splitting bundle. Further, the completion of $D^{\mu}$ along
$\mathcal{B}_{\chi}^{(1)}$ is actually the pullback of the algebra
$U(\mathfrak{g})_{\hat{\chi}}^{\mu}$, and the splitting bundle $M_{\chi}^{\mu}$
is the pullback of the splitting bundle for $U(\mathfrak{g})_{\hat{\chi}}^{\mu}$
over the point $\chi^{(1)}\in\mathfrak{g}^{*,1}$. By abuse of notation,
we shall denote this splitting bundle also by $M_{\chi}^{\mu}$, so
that 
\[
\mbox{Coh}(M)=\pi^{*}(M_{\chi}^{\mu})\otimes_{D_{\hat{\chi}}^{\mu}}\tau_{\lambda}^{\mu}(D^{\lambda}\otimes_{U(\mathfrak{g})^{\lambda}}M)
\]
 Now, as explained above, the algebra $U(\mathfrak{g},e)_{\hat{\chi}}^{\mu}$
is an azumaya algebra over the completion of $\chi^{(1)}\in S_{\mathcal{N}}^{(1)}$.
It is related to $U(\mathfrak{g})_{\hat{\chi}}^{\lambda}$ as follows:
if $i:S_{\mathcal{N}}^{(1)}\to\mathcal{N}^{(1)}$ is the inclusion,
then $i^{*}U(\mathfrak{g})_{\hat{\chi}}^{\mu}$ is also an azumaya
algebra over the completion of $\chi^{(1)}\in S_{\mathcal{N}}^{(1)}$,
and hence is a matrix algebra (of dimension $p^{d(e)}$) over $U(\mathfrak{g},e)_{\hat{\chi}}^{\mu}$.
Further, the equivalence of categories 
\[
Mod(i^{*}U(\mathfrak{g})_{\hat{\chi}}^{\mu})\to Mod(U(\mathfrak{g},e)_{\hat{\chi}}^{\mu})
\]
 is given by a functor $V\otimes_{i^{*}U(\mathfrak{g})_{\hat{\chi}}^{\mu}}?$
for an appropriate $p^{d(e)}$-dimensional splitting bundle $V$.
By construction, this functor agrees with the functor $ $$\mbox{Wh}_{\chi}$
on $Mod(U(\mathfrak{g})_{\chi}^{\mu})$. Therefore, we have the compatibility
\[
i^{*}M_{\chi}^{\mu}\tilde{=}V\otimes E_{\chi}^{\mu}
\]
 where $E_{\chi}^{\mu}$ is the splitting bundle for the azumaya algebra
$U(\mathfrak{g},e)_{\hat{\chi}}^{\mu}$. So we have 
\[
i^{*}\mbox{Coh}(M)=i^{*}\pi^{*}(M_{\chi}^{\mu})\otimes_{i^{*}D_{\hat{\chi}}^{\mu}}i^{*}\tau_{\lambda}^{\mu}(D^{\lambda}\otimes_{U(\mathfrak{g})^{\lambda}}M)
\]
 Further, , the map $\pi\circ i$ conincides with the map $i\circ\pi|_{\tilde{S}_{\mathcal{N}}}$.
So 
\[
i^{*}\mbox{Coh}(M)\tilde{=}(\pi|_{\tilde{S}_{\mathcal{N}}})^{*}E_{\chi}^{\mu}\otimes(\pi|_{\tilde{S}_{\mathcal{N}}})^{*}(V)\otimes_{i^{*}D_{\hat{\chi}}^{\mu}}i^{*}\tau_{\lambda}^{\mu}(D^{\lambda}\otimes_{U(\mathfrak{g})^{\lambda}}M)
\]

To finish the argument, we note that one can play the same kind of
game as above with the localized algebras $D(\chi)^{\mu}$ and $D^{\mu}$-
namely that, after pulling back the above equivalences, we have that
$i^{*}D_{\hat{\chi}}^{\mu}$ is a matrix algebra over $D(\chi)_{\hat{\chi}}^{\mu}$,
and the equivalence between their module categories is given by $(\pi|_{\tilde{S}_{\mathcal{N}}})^{*}(V)\otimes_{i^{*}D_{\hat{\chi}}^{\mu}}?$.
Therefore, the sheaf 
\[
(\pi|_{\tilde{S}_{\mathcal{N}}})^{*}(V)\otimes_{i^{*}D_{\hat{\chi}}^{\mu}}i^{*}\tau_{\lambda}^{\mu}(D^{\lambda}\otimes_{U(\mathfrak{g})^{\lambda}}M)
\]
 is naturally a sheaf of $D(\chi)_{\hat{\chi}}^{\mu}$-modules, which
coincides with $\mathcal{L}^{\mu}(\mbox{Wh}_{\chi}(M))$ by all the
compatibilities of the splitting bundles. Therefore the sheaf $i^{*}\mbox{Coh}(M)$
is precicely $(\pi|_{\tilde{S}_{\mathcal{N}}})^{*}(E_{\chi}^{\mu})\otimes\mathcal{L}^{\mu}(\mbox{Wh}_{\chi}(M))$
which is $\mbox{Coh}(\mbox{Wh}_{\chi}(M))$ by definition.
\end{proof}

\section{Application To Modular Represetation Theory}

In this section, we shall present an application of the theory developed
in sections 1-6 to non-restricted representations of modular lie algebras.
In particular, we shall explain how the representation theory at a
regular $p$-character follows from what we have done. In particular,
we shall reprove a theorem of Jantzen and Soergel which describes
thie category $U_{\chi}^{\hat{0}}-mod$ for regular $\chi$. 

Thus we suppose, in this section and this section only, that $e$
is a regular nilpotent element of $\mathfrak{g}$. This implies that
the slice $S$ is the famous \emph{Kostant section}, which is isomorphic
to the scheme $\mathfrak{h}^{*}/W$ under the Chevalley map $\mathfrak{g}^{*}\to\mathfrak{h}^{*}/W$.\emph{
}In this case the resolution of the slice $\tilde{S}$ is isomorphic
to the scheme $\mathfrak{h}^{*}$, under the Grothendieck map $\tilde{\mathfrak{g}^{*}}\to\mathfrak{h}^{*}$
(this follows by pulling back the the first isomorphism). 

So now we would like to consider the sheaf $\tilde{D}(\chi)$. Since
there is already a map $U(\mathfrak{h})\tilde{=}O(\mathfrak{h}^{*})\to\Gamma(\tilde{D}(\chi))$,
which becomes the (comorphism of) $\tilde{\mathfrak{g}^{*}}\to\mathfrak{h}^{*}$
upon taking assoicated graded algebras, we deduce that in fact $\Gamma(\tilde{D}(\chi))=O(\mathfrak{h}^{*})$.
In fact, the $\Gamma$ is even unnecessary, since $\tilde{S}$ is
an affine variety; and we can say that $\tilde{D}(\chi)\tilde{\to}O(\mathfrak{h}^{*})$. 

So, by our localization for modular $W$-algebras, we have an equivalence
of categories: 
\[
D^{b}(Mod_{0}^{coh}(O(\mathfrak{h}^{*}))\to D^{b}(Mod_{0}^{f.g.}(U(\mathfrak{g},e)))
\]
 Next we want to specialize to the $p$-character $\chi$ . Instead
of the naive specialization of the above section, however, we shall
use the more refined version found in the work of Riche \cite{key-34}.
In particular, we shall need to consider the derived fibre of $\chi$
for the map $\tilde{S}\to S$. In general, this is a complicated object
which needs to be expressed in the language of derived algebraic geometry.
In our case, though, the map $\tilde{S}\to S$ is simply the quotient
$\mathfrak{h}^{*}\to\mathfrak{h}^{*}/W$, and we are considering the
fibre over the point $0$. This is a finite, flat morphism, and so
the dervied fibre is equal to the scheme theoretic fibre, which is
simply $\mbox{Spec}(A)$, where $A=\mbox{Sym}(\mathfrak{h})/\mbox{Sym}(\mathfrak{h})^{W}$. 

Therefore, we can state Riche's results in classical language. We
let $U(\mathfrak{g},e)_{\chi}$ denote the W-algebra at the strict
$p$-character $\chi$. Then we have 
\begin{thm}
Upon restricting to the derived fibre, the above equivalence becomes
an equivalence 
\[
D^{b}(Mod(A))\tilde{\to}D^{b}(Mod_{0}^{f.g.}(U(\mathfrak{g},e)_{\chi})
\]

\end{thm}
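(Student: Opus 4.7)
The plan is to obtain the equivalence as a derived base change, in the framework of Riche \cite{key-34}, of the equivalence
\[
D^b(Mod_0^{coh}(O(\mathfrak{h}^*)))\;\tilde{\to}\;D^b(Mod_0^{f.g.}(U(\mathfrak{g},e)))
\]
established just above, which is inverse to the localization functor $\mathcal{L}^{\hat 0}(M)=\tilde{D}(\chi)\otimes^L_{U(\mathfrak{g},e)}M$. The key observation is that $\tilde{D}(\chi)\cong O(\mathfrak{h}^*)$ and $U(\mathfrak{g},e)$ are both algebras over the Harish-Chandra center $O(S)=O(\mathfrak{h}^*/W)$, and the structural map $U(\mathfrak{g},e)\to\tilde{D}(\chi)$ is $O(S)$-linear by \prettyref{prop:Glob-Secs}; consequently the equivalence descends to an equivalence between derived base changes along any closed immersion into $S$.

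Next I would compute both derived fibres at the point $\chi$. On the geometric side, the morphism $\tilde{S}\to S$ is the Chevalley map $\mathfrak{h}^*\to\mathfrak{h}^*/W$, which is finite flat by the Chevalley-Shephard-Todd theorem; hence the derived fibre over $\chi$ (corresponding to $0\in\mathfrak{h}^*/W$ under the Kostant identification) coincides with the scheme-theoretic fibre $\mathrm{Spec}(A)$, where $A=\mathrm{Sym}(\mathfrak{h})/\mathrm{Sym}(\mathfrak{h})^W_+$, so the base-changed category on this side is $D^b(Mod(A))$. On the representation side, $U(\mathfrak{g},e)\otimes^L_{O(S)}k_\chi$ should be identified with the strict $p$-character quotient $U(\mathfrak{g},e)_\chi$; the $L$ is superfluous because in the regular case $U(\mathfrak{g},e)$ is flat over $O(S)$ (being essentially governed by the Artin-Schreier extension of $W$-invariants).

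Composing these identifications with the restricted equivalence yields the desired
\[
D^b(Mod(A))\;\tilde{\to}\;D^b(Mod_0^{f.g.}(U(\mathfrak{g},e)_\chi)).
\]
The main obstacle, and the reason for appealing to Riche's formalism, is verifying that the derived base change truly preserves the localization equivalence: in particular, that the tensor formula $\mathcal{L}^{\hat 0}(M)=\tilde{D}(\chi)\otimes^L_{U(\mathfrak{g},e)}M$ base-changes compatibly to $A\otimes^L_{U(\mathfrak{g},e)_\chi}M_\chi$ with no stray higher Tors. The needed flatness of both the Chevalley map $\tilde{S}\to S$ and the structural map $O(S)\to U(\mathfrak{g},e)$ in the regular case is precisely what makes this work, and this is the technical heart of what I would cite from \cite{key-34}.
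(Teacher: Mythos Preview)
Your approach is essentially the paper's: both invoke Riche's derived base change formalism \cite{key-34} and observe that, since the Chevalley map $\mathfrak{h}^*\to\mathfrak{h}^*/W$ is finite and flat, the derived fibre over $0$ is the ordinary scheme-theoretic fibre $\mathrm{Spec}(A)$. The paper gives no further argument than this, stating the theorem as a direct specialization of Riche's result.

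One point of bookkeeping to straighten out: you write the representation-side base change as $U(\mathfrak{g},e)\otimes^L_{O(S)}k_\chi$, with $O(S)$ the Harish-Chandra center. But the strict $p$-character quotient $U(\mathfrak{g},e)_\chi$ is taken with respect to the $p$-center $O(S^{(1)})$, not the Harish-Chandra center; tensoring over the latter at $0$ would produce $U^{0}(\mathfrak{g},e)$, which already sits on the other side of the subscript in the target category. In Riche's framework the passage from generalized to strict $p$-character is a base change along the $p$-center, and the flatness you need is that of $\tilde{S}^{(1)}\to S^{(1)}$ (equivalently of the Frobenius-twisted Chevalley map). This does not alter the shape of the argument, but the two centers should be kept distinct.
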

Let us relate this to the result of Jantzen and Soergel concerning
$U(\mathfrak{g})_{\chi}$. In particular, by Premet's theory, we have
a Morita equivalence 
\[
Mod(U(\mathfrak{g})_{\chi})\tilde{\to}Mod(U(\mathfrak{g},e)_{\chi})
\]
 which respects the action of the Harish-Chandra center. Therefore,
a regular block of $Mod(U(\mathfrak{g},e)_{\chi})$ is equivalent
to a regular block of $Mod(U(\mathfrak{g})_{\chi})$. The theorem
of Jantzen and Soergel describes the endomorphism algebra of a projective
generator of such a block, in particular, they prove that 
\[
End(P)\tilde{=}A
\]
 and therefore they deduce an equivalence of categories $Mod_{0}(U(\mathfrak{g})_{\chi})\tilde{\to}Mod(A)$. 

To deduce this (a priori) stronger result from the derived equivalence
above, we have to ask what happens to the standard $t$-structure
on $D^{b}(Mod_{0}^{f.g.}(U(\mathfrak{g},e)))$ (and on its completion
$D^{b}(Mod_{\hat{0}}^{f.g.}(U(\mathfrak{g},e)))$) under the first
equivalence above. In fact, this is a well-understood phenomenon,
due to the main result of \cite{key-6} and the comparison theorem
of section ? above. In particular, the resulting $t$-structure on
$D^{b}(Mod_{0}^{coh}(O(\mathfrak{h}^{*}))$ (and its completion $D^{b}(Mod_{\hat{0}}^{coh}(O(\mathfrak{h}^{*}))$)
is the exotic $t$-structure of \cite{key-6}. For our purposes here,
the important point is that the structure sheaf $O(\mathfrak{h}^{*})$
resricts to a projective object in the heart of this $t$-structure
on $D^{b}(Mod_{\hat{0}}^{coh}(O(\mathfrak{h}^{*}))$; and therefore,
after restricting to the derived fibre, the structure sheaf $A$ is
a projective object in the heart of the exotic $t$-structure on $D^{b}(Mod(A))$.
Therefore, in this case, the exotic $t$-structure coincides with
the usual one, and we recover the result of Jantzen and Soergel.

\curraddr{Department of Mathematics, Massachusetts Institute of Technology,
Cambridge, MA.}

\email{cdodd@math.mit.edu}
\end{document}